\def\theequation{\thesection.\arabic{equation}}
  \def\@listi{%
  \leftmargin = 0pt \rightmargin = 0pt
     \labelwidth\leftmargin \advance\labelwidth-\labelsep
     \partopsep  = 0pt \itemsep       = 0pt
     \itemindent = 0pt \listparindent = 0pt}
  \let\@listI\@listi
  \def\@listii{%
     \leftmargin = 0pt \rightmargin = 0pt
     \labelwidth\leftmargin \advance\labelwidth-\labelsep
     \topsep     = 0pt \partopsep     = 0pt \itemsep   = 0pt
     \itemindent = 0pt \listparindent = 0pt}
  \let\@listiii\@listii
  \let\@listiv\@listii
  \let\@listv\@listii
  \let\@listvi\@listii
\theoremstyle{plain}
\newtheorem{theorem}{Theorem}[section]
\newtheorem*{theorem*}{Theorem}
\newtheorem{proposition}[theorem]{Proposition}            
\newtheorem{corollary}[theorem]{Corollary}                
\newtheorem{lemma}[theorem]{Lemma}
\newtheorem{propo}{Proposition}
\newtheorem{theor}{Theorem}
\newtheorem{corol}[theor]{Corollary}
\theoremstyle{definition}\newtheorem{remark}[theorem]{Remark}
\newtheorem{definition}[theorem]{Definition}
\DeclareMathOperator{\tr}{tr}
\begin{document}
\title[Zero mean curvature surfaces of Riemann type]{Causal characters of zero mean curvature surfaces of Riemann type in the Lorentz-Minkowski $3$-space}
\author[S.\ Akamine]{Shintaro Akamine$^*$}
\address{Graduate School of mathematics \endgraf 
Kyushu University \endgraf
744 Motooka, Nishi-ku, \endgraf
Fukuoka 819-0395, Japan
}

\subjclass[2010]{ 
Primary 53A10; Secondary 53A35, 53C50.}

\keywords{ 
Causal character; zero mean curvature surface of Riemann type.
}
\thanks{ 
$^*$Research Fellow of the Japan Society for the Promotion of Science.
}
\email{s-akamine@math.kyushu-u.ac.jp}
\begin{abstract}
A zero mean curvature surface in the Lorentz-Minkowski $3$-space is said to be of Riemann type if it is foliated by circles and at most countably many straight lines in parallel planes. We classify all zero mean curvature surfaces of Riemann type according to their causal characters, and as a corollary, we prove that if a zero mean curvature surface of Riemann type has exactly two causal characters, then the lightlike part of the surface is a part of a straight line.
\end{abstract}
\maketitle
\section{Introduction\label{intro}}
We denote by $\mathbb{L}^3=\{(x,y,t)\mid x,y,t \in \mathbb{R}\}$ the Lorentz-Minkowski $3$-space with the metric $\langle \ ,\  \rangle$ of signature $(+,+,-)$. A zero mean curvature (ZMC, for short) surface in $\mathbb{L}^3$ is a surface whose mean curvature is identically zero. A spacelike one and timelike one are often said to be a maximal surface and a timelike minimal surface, respectively. There has been a lot of studies about these classes. For example, there are Weierstrass-Enneper type representation formulas for maximal surfaces \cite{Kobayashi} and timelike minimal surfaces \cite{Konderak,Magid} similar to the case of minimal surfaces in the Euclidean 3-space $\mathbb{E}^3$. On the other hand, in contrast to the case of minimal surfaces in $\mathbb{E}^3$, motivated by Calabi's work \cite{Calabi}, Cheng and Yau \cite{CY} showed that the only complete maximal surfaces are spacelike planes. 
From this fact, more general classes of maximal surfaces with singularities such as {\it maxfaces} in \cite{UY} or {\it generalized maximal surfaces} in \cite{Estudillo} have been frequently studied. Similarly, timelike minimal surfaces with singularities have been studied and many examples constructed. Note that we use the word singularities in the sense of points where a surface is not an immersion. Moreover, these surfaces often can be real analytically extended to ZMC surfaces which have more than one causal character.  
For example, motivated by the works \cite{Gu1, Gu2, Gu3,KKSY,Klyachin}, in \cite{FujimoriETAL3},  Fujimori, Kim, Koh, Rossman, Shin, Umehara, Yamada, and Yang showed that any maxface which has non-degenerate fold-singularities \cite[Definition 2.13]{FujimoriETAL3} is extended real analytically to a timelike minimal surface. In this extension, the set of fold singularities of the maxface forms a non-degenerate null curve on a ZMC surface, where a non-degenerate null curve is a regular curve whose velocity vector field is lightlike and not proportional to its acceleration vector field everywhere. On the other hand, there are no similar results for ZMC surfaces whose lightlike parts are straight lines, which degenerate everywhere. 
However, several ZMC surfaces which have exactly two causal characters and their lightlike parts are straight lines were constructed in \cite{FujimoriETAL1}, and there is a one-parameter family of ZMC surfaces which change their causal characters from spacelike to timelike across a lightlike line constructed in \cite{FujimoriETAL2}. These surfaces are the only known examples of ZMC surfaces which change their causal characters from spacelike to timelike across a lightlike line. From this viewpoint, the study of ZMC surfaces which have more than one causal character and especially the construction of examples of ZMC surfaces containing lightlike lines are important.

In this paper, we deal with non-rotational ZMC surfaces foliated by circles and at most countably many straight lines in parallel planes, called ZMC surfaces of Riemann type, and study the causal characters of these surfaces. The notion of circles in $\mathbb{L}^3$ is defined by L\'opez (see, for example, \cite{LLR,Lopez1}), and circles in $\mathbb{L}^3$ are classified into three cases: Euclidean circles in spacelike planes, hyperbolas in timelike planes with lightlike asymptotes, and parabolas in lightlike planes with lightlike axes according to causal characters of planes containing circles (see also Remark \ref{remark:parallel planes}). Therefore, the class of ZMC surfaces of Riemann type foliated by these circles is large enough to include some important examples of ZMC surfaces containing lightlike lines.

Spacelike and timelike ZMC surfaces of Riemann type and, more generally, constant mean curvature surfaces foliated by circles in $\mathbb{L}^3$ have been studied in \cite{LLR,Lopez1,Lopez2}, etc. L\'opez \cite{Lopez1} and Honda-Koiso-Tanaka \cite{HKT} constructed the following parametrizations of ZMC surfaces of Riemann type foliated by circles in spacelike parallel planes which appear as solutions with non-constant radii of a system of ordinary differential equations or ODEs, for short (see Remark \ref{remark:ODE1}): 
 \begin{equation}
X(r, \theta ) = \left(\displaystyle\int^r_{r_0} \cfrac{as^2}{\Delta (s)}\,ds+r\cos\theta ,r\sin\theta , \displaystyle\int ^r_{r_0}\cfrac{1}{\Delta (s)}\,ds \right),\quad r \in I ,\; \theta \in \mathbb{R}/(2\pi \mathbb{Z}), \nonumber
\end{equation}
where $\Delta(s):=\sqrt{a^2s^4+bs^2+1}$, $a>0$, $b \in \mathbb{R}$, and $I$ is a certain interval in $\mathbb{R}$. They studied causal characters of these surfaces. In this paper, we call surfaces as above {\it general type surfaces}. There may be several maximal surfaces,  timelike minimal surfaces, and surfaces which have more than one causal character in the above class. In this paper, we classify all ZMC surfaces of Riemann type according to their causal characters. More precisely, we prove the following result (Theorem \ref{thm:causalofspacelike} (1)):\begin{theor}
Causal characters of the above surface $X$ and the maximal interval of $r$ are determined as follows:
\begin{enumerate}
\renewcommand{\labelenumi}{(\roman{enumi})}
\item $X$ is spacelike if and only if $2a<b$.
\item $X$ is timelike if and only if $b< -2a$ and $0< r^2<((-b-\sqrt{b^2-4a^2})/2a^2)$.
\item $X$ has both spacelike part and lightlike part and it does not have timelike part if and only if $b=2a$. Moreover, in this case the lightlike part of the surface is a part of a straight line.
\item $X$ has both timelike part and lightlike part and it does not have spacelike part if and only if $b=-2a$ and $0<r^2<1/a$. Moreover, in this case the lightlike part of the surface is a part of a straight line.
\item The surfaces except those stated above are surfaces which have all causal characters. Moreover, in the case that $b=-2a$ and $r^2>1/a$, the lightlike part of the surface consists of a part of a straight line and two non-degenerate null curves.
\end{enumerate}
\end{theor}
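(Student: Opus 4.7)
The plan is to reduce the entire classification to the sign of a single scalar function on the parameter domain. Direct computation of the tangent vectors $X_r=(ar^2/\Delta+\cos\theta,\sin\theta,1/\Delta)$ and $X_\theta=(-r\sin\theta,r\cos\theta,0)$ against the Lorentzian metric, together with the defining identity $\Delta(r)^2=a^2r^4+br^2+1$, collapses the determinant of the first fundamental form into the closed form
\[
EG-F^2 \;=\; \frac{r^2}{\Delta(r)^2}\,\Bigl[\bigl(ar^2\cos\theta+\Delta(r)\bigr)^2-1\Bigr].
\]
Since the tangent plane at $(r,\theta)$ is spacelike, lightlike, or timelike exactly when this quantity is positive, zero, or negative, every claim of the theorem becomes a statement about the sign of $\Phi(r,\theta):=(ar^2\cos\theta+\Delta)^2-1$.

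For $r>0$ fixed, as $\theta$ varies, $\Phi$ attains its maximum $(\Delta+ar^2)^2-1$ at $\theta=0$ and its minimum $(\Delta-ar^2)^2-1$ at $\theta=\pi$ (or the absolute minimum $-1$ when $ar^2>\Delta$). Parts (i) and (ii) then follow by squaring scalar inequalities: $\Phi>0$ everywhere is equivalent to $\Delta>ar^2+1$, which squares to $(b-2a)r^2>0$, i.e.\ $b>2a$; and $\Phi<0$ everywhere is equivalent to $\Delta+ar^2<1$, which requires $r^2<1/a$ and squares to $b<-2a$. The bound in (ii), $r^2<(-b-\sqrt{b^2-4a^2})/(2a^2)=:u_-$, is the smaller positive root of $a^2u^2+bu+1=0$, needed for $\Delta$ to be real on $[0,r]$; the identities $u_-u_+=1/a^2$ and $u_-<u_+$ give $u_-<1/a$, so the constraint $r^2<1/a$ is absorbed. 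Parts (iii) and (iv) are the boundary cases $b=\pm 2a$, where $\Delta$ has the closed form $ar^2+1$ or $|1-ar^2|$, so $\Phi=(ar^2(1+\cos\theta)+1)^2-1\ge 0$ in (iii) and (for $r^2<1/a$) $\Phi=(1-ar^2(1-\cos\theta))^2-1\le 0$ in (iv), each of one sign with a unique zero at $\theta=\pi$ or $\theta=0$.

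The straight-line claims in (iii), (iv), and the relevant sub-case of (v) are verified by differentiating $X(r,\theta_0)$ along the zero locus of $\Phi$: at $\theta_0=\pi$ in case (iii), $\partial_r X=(-1/(ar^2+1),\,0,\,1/(ar^2+1))$ is proportional to the constant lightlike direction $(-1,0,1)$, so the locus is a straight lightlike line; case (iv) is analogous with direction $(1,0,1)$. For case (v), in the range $-2a<b<2a$ the squared identities $\Delta^2-(1\pm ar^2)^2=(b\mp 2a)r^2$ yield $\Phi(r,0)>0$ and $\Phi(r,\pi)<0$, so all three characters occur, and the lightlike set $\cos\theta=(1-\Delta)/(ar^2)$ is generically a pair of non-straight curves. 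In the sub-case $b=-2a$, $r^2>1/a$, one has $\Delta=ar^2-1$ and, setting $v:=ar^2(1+\cos\theta)\in[0,2ar^2]$, $\Phi=v(v-2)$; this vanishes at $v=0$ (the line $\theta=\pi$, by the tangent calculation above) and at $v=2$ (giving $\cos\theta=2/(ar^2)-1$, a pair of curves). The remaining sub-case $b<-2a$, $r^2\ge u_+$, is dispatched identically.

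The main technical obstacle is the non-degeneracy assertion for the curves $\cos\theta(r)=2/(ar^2)-1$ in the $b=-2a$, $r^2>1/a$ sub-case (and the analogous curves elsewhere in (v)): their velocity is automatically null, and one must verify it is not proportional to the acceleration. Parametrizing the curve by $r$ and differentiating $X(r,\theta(r))$ twice, using $\Delta=ar^2-1$ and the constraint $ar^2\cos\theta=1-ar^2$ to simplify, one checks that the acceleration has a component transverse to the velocity vector in $\mathbb{L}^3$. This is the most laborious algebraic step of the proof, but it is strategically routine once the key identity for $EG-F^2$ is in hand.
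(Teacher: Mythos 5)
Your proposal is correct and follows essentially the same route as the paper: both compute $EG-F^2$ for the parametrization, reduce it to the sign of $(ar^2\cos\theta+\Delta)^2-1$ (the paper writes this as $-\xi\eta$ with $\xi=1+ar^2\cos\theta+\Delta$, $\eta=1-ar^2\cos\theta-\Delta$), and then run the same case analysis on $b$ versus $\pm 2a$, identifying the lightlike loci $\theta=0,\pi$ as straight lines via the same cancellation in the integrands. The only cosmetic differences are that you prove (i) and (ii) directly where the paper cites L\'opez--Honda--Koiso--Tanaka, and that your non-degeneracy check for the two null curves in (v) is sketched rather than carried out --- but the paper's own proof is no more explicit there.
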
 
In addition to the above surfaces, we have found the following new ZMC surfaces of Riemann type which appear as solutions with constant radii of a system of ODEs, which we call {\it singular type surfaces} (see Remark \ref{remark:ODE1}):
\begin{equation*}
X(u, \theta) =\left(u+\frac{1}{\sqrt{a}}\cos \theta , \frac{1}{\sqrt{a}}\sin \theta , u\right),\quad\left(u, \theta \right) \in \mathbb{R}\times \mathbb{R}/(2\pi \mathbb{Z}),\;a>0,
\end{equation*}
and prove that these surfaces have timelike part and two lightlike lines (Theorem \ref{thm:causalofspacelike}.~(2)). In \cite{Lopez1}, L\'opez also constructed ZMC surfaces foliated by hyperbolas and parabolas which appear as solutions with non-constant radii of systems of ODEs, which we call {\it general type surfaces} in this paper. In addition to general type surfaces, we also have found ZMC surfaces foliated by hyperbolas and parabolas which appear as solutions with constant radii of systems of ODEs, and we call these surfaces {\it singular type surfaces}. In Theorems \ref{thm:causaloftimelike} and \ref{thm:causaloflightlike}, we determine causal characters of general and singular type surfaces. 
In particular, as a corollary of these classification theorems, we obtain the following corollary for ZMC surfaces of Riemann type which have exactly two causal characters.
\begin{corol}[Corollary \ref{thm:2-causal}]\label{CorB}
If a ZMC surface of Riemann type has exactly two causal characters, then the lightlike part is a part of a straight line.
\end{corol}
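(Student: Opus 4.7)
The plan is to derive this corollary as a direct consequence of the three classification theorems stated in the paper (Theorems \ref{thm:causalofspacelike}, \ref{thm:causaloftimelike}, \ref{thm:causaloflightlike}). By definition, every ZMC surface of Riemann type is foliated by circles in parallel planes, and the causal character of those planes is constant, so the surface falls into one of three families: Euclidean circles in spacelike planes, hyperbolas in timelike planes, or parabolas in lightlike planes. Each family further splits into a \emph{general type} (non-constant radius, obtained from the indicated ODE systems) and a \emph{singular type} (constant radius). The proof is then an enumeration across these six subclasses: for each one I would isolate the parameter regime in which exactly two causal characters occur, and verify that in precisely that regime the lightlike locus is a straight line segment.

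For the spacelike-leaf case, this reduces to reading off Theorem A. Among its five items, only (iii) with $b=2a$ and (iv) with $b=-2a$, $0<r^2<1/a$ produce exactly two causal characters, and in both of these the statement already asserts that the lightlike part is a portion of a straight line. Item (v) with $b=-2a$ and $r^2>1/a$ is excluded because a non-degenerate null curve appears in the lightlike locus, and by a spacelike-to-timelike transition argument this forces all three causal characters to occur. The singular-type surfaces in this family are handled by Theorem \ref{thm:causalofspacelike}.(2): there the surface has a timelike region bounded by two lightlike straight lines, so again exactly two characters with straight-line lightlike locus.

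The hyperbolic and parabolic families would be handled in the same way using Theorems \ref{thm:causaloftimelike} and \ref{thm:causaloflightlike}. In each case I expect a threshold parameter value playing the role of $b=\pm 2a$ above: the threshold yields a surface with two characters whose transition locus is a degenerate leaf collapsing to a line, and on one side of the threshold the transition locus is instead a non-degenerate null curve along which the surface must change from spacelike to timelike. The singular-type hyperbolic and parabolic surfaces, where one foliating curve degenerates entirely, should once more provide two-character surfaces whose lightlike locus is a straight line.

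The main obstacle is not the corollary itself but rather the accurate tabulation of these six families from the underlying ODE classification; given the three classification theorems as input, the corollary is essentially bookkeeping. The conceptual point I would highlight in the write-up is the dichotomy that drives everything: a ZMC surface of Riemann type can change causal character across its lightlike locus in only two ways, namely across a non-degenerate null curve (which, for these surfaces, always produces a spacelike-to-timelike transition and hence all three characters) or across a lightlike straight line. Therefore having exactly two causal characters forces the lightlike part to be a straight line, which is precisely the assertion of Corollary \ref{CorB}.
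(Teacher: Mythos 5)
Your proposal is correct and matches the paper exactly: the paper offers no separate argument for Corollary \ref{thm:2-causal}, deducing it directly from Theorems \ref{thm:causalofspacelike}, \ref{thm:causaloftimelike}, and \ref{thm:causaloflightlike}, each of which already asserts that in every subclass with exactly two causal characters the lightlike part is a portion of a straight line. Your closing ``dichotomy'' is a reasonable post-hoc summary but is not an independent argument --- it is itself only justified by the same case-by-case inspection --- so the enumeration is the proof, as in the paper.
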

The class of ZMC surfaces containing lightlike lines has been studied in recent years. In \cite{FujimoriETAL1}, the authors defined the characteristics of ZMC surfaces along a lightlike line, and categorized these surfaces into the following six classes:
\begin{center}
$ \alpha^+$, $ \alpha^0_{\rm I}$, $ \alpha^0_{\rm II}$, $ \alpha^-_{\rm I}$, $ \alpha^-_{\rm II}$, $ \alpha^-_{\rm III}$,
\end{center}
and constructed some examples belonging to the above classes (see Table 1 and 2). However, as mentioned above, ZMC surfaces containing lightlike lines have not been sufficiently discussed. There are not so many known examples of ZMC surfaces containing lightlike lines. In Section 5, we determine the characteristics of ZMC surfaces of Riemann type containing lightlike lines which appear in Corollary \ref{CorB} above and the one exceptional case (Theorem \ref{thm:sp} (1) (iii)) and show that there are one- or two-parameter families of ZMC surfaces of Riemann type which have a lightlike line with characteristics $ \alpha^+$, $ \alpha^-_{\rm I}$, $ \alpha^-_{\rm II}$, and $ \alpha^-_{\rm III}$.

In Section 6, we consider ZMC entire graphs which have all causal characters. Calabi \cite{Calabi} showed that there are no maximal entire graphs except for linear functions.  However, Kobayashi \cite{Kobayashi} found ZMC entire graphs which are called {\it the helicoid of the second kind} and {\it Scherk's surface of the first kind} (see (\ref{eq:6.1}) and (\ref{eq:6.2}), respectively). These ZMC entire graphs have all causal characters. After that, in \cite{ST}, Sergienko and Tkachev produced several interesting ZMC entire graphs which admit some isolated singularities. In this paper, we obtain the following new ZMC entire graph of Riemann type which is foliated by parabolas and has all causal characters:
\begin{equation}\label{entiregraph_intro}
 X(u,v) =\left(v, -e^{-4u}+u+\cfrac{v^2}{2}, -e^{-4u}-u+\frac{v^2}{2}\right),\quad (u, v)\in \mathbb{R}^2.
 \end{equation}
Most recently, this surface was also constructed independently by Fujimori, Kawakami, Kokubu, Rossman, Umehara and Yamada \cite{FujimoriETAL4} from a different viewpoint.

The organization of the paper is as follows. In Section 2, we first introduce basic notations and local theory of surfaces in $\mathbb{L}^3$. The most important notion is the causal characters of a surface (Definition \ref{def:causal}). In Section 3, we review the parametrizations of ZMC surfaces of Riemann type foliated by Euclidean circles, hyperbolas, and parabolas, which were constructed mainly by L\'opez in \cite{Lopez1} and give new ZMC surfaces of Riemann type which we call singular type surfaces (Theorems \ref{Theorem:Parameterizations of ZMC surfaces Foliated by circles}, \ref{Theorem:Parameterizations of ZMC surfaces Foliated by hyperbolas}, and \ref{thm:lightlike}). In Section 4, we determine the causal characters of all ZMC surfaces of Riemann type (Theorems \ref{thm:causalofspacelike}, \ref{thm:causaloftimelike}, and \ref{thm:causaloflightlike}) and prove Corollary \ref{CorB} above. Moreover, in this section, we explain the relationship between general type and singular type surfaces. In Section 5, we review the definition of the characteristic of a ZMC surface containing a lightlike line, which was introduced in \cite{FujimoriETAL1} and give a useful lemma for the calculation of characteristics (Lemma \ref{lemma:characteristic}). Then, we determine the characteristics of ZMC surfaces of Riemann type containing a lightlike line (Theorems \ref{thm:sp}, \ref{thm:extimelike}, and \ref{thm:exlightlike}). In Section 6, we prove that the surface (\ref{entiregraph_intro}) is a ZMC entire graph of Riemann type which has all causal characters (Theorem \ref{Therem:6}).
\\

\section{Notations and Preliminaries\label{preliminary}}
 We denote by $\mathbb{L}^3$ the three-dimensional Lorentz-Minkowski space, that is, the three-dimensional real vector space $\mathbb{R}^3$ with the metric 
 \begin{center}
$\langle \ ,\  \rangle =dx^2+dy^2-dt^2$,
\end{center}
where $(x, y, t)$ are the canonical coordinates in $\mathbb{R}^3$. In $\mathbb{L}^3$, a vector $v$ has one of the three {\it causal characters}: it is {\it spacelike} if $\langle v, v \rangle > 0$ or $v=0$, {\it timelike} if  $\langle v, v \rangle < 0$, and {\it lightlike} if $\langle v, v \rangle = 0$ and $v\neq 0$. Let $\Sigma :=\Sigma ^2$ be a two-dimensional connected smooth manifold and $X:\Sigma \longrightarrow$ ($\mathbb{L}^3$, $\langle \ ,\  \rangle$) be a smooth map which is an immersion on an open dense subset $W\subset\Sigma$. Causal characters of a surface are defined as follows.
\begin{definition} \label{def:causal}
A surface $X$ is said to be {\it spacelike}, {\it timelike}, or {\it lightlike} on a subset $U$ in $W$ if the induced metric on $U$ is positive definite, non-degenerate with index 1, or degenerate on each tangent plane, respectively. This property of the subset $U$ is called the {\it causal character} of $X$ on $U$.
 \end{definition}
The causal character of a surface $X$ at each point is determined by the causal character of its normal direction, that is, $X$ is spacelike, timelike, or lightlike at a point $p$ if and only if the normal direction is timelike, spacelike, or lightlike, respectively. 

We denote by $\nabla$ and $\overline{\nabla}$ the Levi-Civita connections on $\Sigma$ and $\mathbb{L}^3$, respectively. On the spacelike or timelike part of $X$, the second fundamental form ${\rm II}$ and the mean curvature vector $\overrightarrow{H}$ are defined as 
\begin{align}
{\rm II}(Y,Z)&=\overline{\nabla}_{Y} Z-\nabla_{Y} Z, \nonumber \\
\overrightarrow{H}&=(1/2)\tr_{ds^2}{{\rm II}}, \nonumber
\end{align}
where $Y$ and $Z$ are smooth vector fields on $\Sigma $. Moreover, for the unit normal vector field $\nu$ along $X$, the mean curvature $H$ of $X$ is defined so that $\overrightarrow{H}=H\nu$ holds. If we take a local coordinate system $(U;u,v)$ on $\Sigma $\, then the induced metric can be written as $ds^2=Edu^2+2Fdudv+Gdv^2$ on $U$, where $E:=\langle X_u$, $X_u \rangle $, $F:=\langle X_u, X_v \rangle $, $G:=\langle X_v, X_v \rangle$, $X_u=\partial X/\partial u$ and $X_v=\partial X/\partial v$. The unit normal vector field $\nu $ on $U$ is given by
\begin{equation}\label{eq:normal direction}
\nu =\cfrac{ X_u \times X_v }{\sqrt{\epsilon (EG-F^2)}}, 
\end{equation}
where 
\begin{align}
\epsilon =\left\{ \begin{array}{ll}
1, & \text{if the surface is spacelike}, \nonumber \\
-1, & \text{if the surface is timelike}, \\
\end{array} \right.
\end{align}
and $\times$ stands for the Lorentzian vector product in $\mathbb{L}^3$. Moreover, by equation (\ref{eq:normal direction}), the normal vector $X_u \times X_v$ satisfies
\begin{equation*}\label{EG-F^2}
\langle X_u \times X_v, X_u \times X_v  \rangle =-EG+F^2.
\end{equation*}
Hence, the causal character of the surface $X$ is also determined by the determinant of the induced metric $ds^2=X^*\langle \ ,\  \rangle$ on each local coordinate system. Set $L:=\langle X_{uu}, \nu \rangle$, $M:=\langle X_{uv}, \nu \rangle$, and $N:=\langle X_{vv}, \nu \rangle$. Then the mean curvature $H$ with respect to $\nu $ is expressed on $U$ by 
 \begin{center}
 $H=-\cfrac{\epsilon}{2} \cfrac{EN-2FM+GL}{EG-F^2}.$
 \end{center}

A surface is called a {\it ZMC surface} if its mean curvature vanishes identically on the spacelike part and the timelike part. By the above equation, the equation $H=0$ is independent of the causal characters of the surface. Moreover, if the surface is described locally as the graph of a function $f$ on the $xy$-plane, the equation $H=0$ is equivalent to \label{meancurvature}
\begin{equation}
(1-f_y^2)f_{xx}+2f_xf_yf_{xy}+(1-f_x^2)f_{yy}=0, \label{eq:ZMC}
\end{equation}
which is called the {\it zero mean curvature equation}. Since the normal direction of the graph $t=f(x,y)$ is given by the vector $(f_x,f_y,1)$, then the immersion is spacelike, timelike, or lightlike at a point $p$ if and only if $1-f_x^2-f_y^2$ is positive, negative, or zero at $p$, respectively. These conditions correspond to the conditions that the partial differential equation  (\ref{eq:ZMC}) is elliptic, hyperbolic, or parabolic, respectively.\\

 \section{ZMC surfaces of Riemann type\label{Riemtype}}
   In \cite{Riemann}, Riemann constructed non-rotational minimal surfaces in the three-dimensional Euclidean space $\mathbb{R}^3$ which are foliated by circles and straight lines in parallel planes. They are called Riemann's minimal surfaces. In this section, we define ZMC surfaces of Riemann type in $\mathbb{L}^3$ and review the parametrizations of them. The following construction is mainly due to L\'opez \cite{Lopez1} and Honda-Koiso-Tanaka \cite{HKT}. Especially, the representation formula (\ref{eq:splike}) in Theorem \ref{Theorem:Parameterizations of ZMC surfaces Foliated by circles} is a special case of \cite[Proposition 3]{HKT}, and the representation formulas (\ref{eq:type1}), (\ref{eq:type2}) in Theorem \ref{Theorem:Parameterizations of ZMC surfaces Foliated by hyperbolas} and the representation formula (1) in Theorem \ref{thm:lightlike} were constructed in \cite[Theorem 1]{Lopez1}.
\subsection{Circles in $\mathbb{L}^3$}
We will give definitions of circles in $\mathbb{L}^3$ and rotational (resp. non-rotational) surfaces in $\mathbb{L}^3$. Then, by using them, we will define ZMC surfaces of Riemann type. 
\begin{definition}[{\cite[Definition 2]{LLR}}]
Let $G=G(l)$ be the identity component of the group of isometries in $\mathbb{L}^3$ that fixes pointwise a straight line $l$. The nonlinear $G(l)$-orbit of each $p_0 \in \mathbb{L}^3 \setminus l $ is called a {\it circle} in $\mathbb{L}^3$.
\end{definition}
\begin{definition}[{\cite[Section 3]{Kobayashi} or \cite[Section 2]{Lopez1}}]
A surface is said to be {\it rotational} (resp. {\it non-rotational}) with axis $l$ if the surface is (resp. is not) invariant under the group $G$ that fixes pointwise the straight line $l$ in the above definition.
\end{definition}
\begin{definition} 
A {\it ZMC surface of Riemann type} is a non-rotational ZMC surface which is foliated by circles in $\mathbb{L}^3$ and at most countably many straight lines in parallel planes.
\end{definition}
\begin{remark}\label{remark:parallel planes}
In the above definition, we can remove the assumption that the circles are in parallel planes. In fact, for any ZMC surface foliated by circles in $\mathbb{L}^3$, these circles automatically lie in parallel planes (see \cite[Theorem 4]{LLR} and \cite[Proposition 2]{Lopez1}).
\end{remark}
Let $\{e_1, e_2, e_3\} $ be the canonical base in $\mathbb{R}^3$, that is, $e_1:=(1,0,0), e_2:=(0,1,0), e_3:=(0,0,1)$. Circles in $\mathbb{L}^3$ are classified into the following three types up to isometries in $\mathbb{L}^3$ according to causal characters of the fixed straight line. 
\begin{enumerate}
  \item {\it The case that $l$ is timelike}. We may assume that $l:=\text{span}\{e_3\}$, where `\text{span}' denotes the set of linear combinations. Then every element of the connected component containing the identity of the isometry group which fixes the line $l$ is written as 
\begin{center}
$  A= \left(
    \begin{array}{ccc}
      \cos\theta & -\sin\theta  & 0 \\
      \sin\theta  & \cos\theta  & 0 \\
      0 & 0 & 1 
 \end{array}
  \right) $.
  \end{center}
  In this case, the circles are Euclidean circles written as 
  \begin{equation*}
  \alpha(\theta)=c+r\cos\theta e_1+r\sin\theta e_2,\quad c \in l,\;r\neq0. 
  \end{equation*}
  Each circle lies in a timelike plane which is parallel to the $xy$-plane.
  \item {\it The case that $l$ is spacelike}. We may assume that $l:=\text{span}\{e_1\}$. Then every element of the connected component containing the identity of the isometry group which fixes the line $l$ is written as 
\begin{center}
$  A= \left(
    \begin{array}{ccc}
      1 & 0 & 0 \\
      0  & \cosh\theta  & \sinh\theta \\
      0 & \sinh\theta & \cosh\theta
 \end{array}
  \right) $.
  \end{center}
 In this case, the circles are divided into two types of hyperbola: 
\begin{align*}
  {\rm Type\ I}\hspace{0.5cm}  \alpha (\theta)=c+r\cosh\theta e_2+r\sinh\theta e_3,   \\
 {\rm Type\ II}\hspace{0.5cm}  \alpha (\theta)=c+r\sinh\theta e_2+r\cosh\theta e_3,
\end{align*}
where $ c \in l, r\neq0 $. Each circle lies in a timelike plane which is parallel to the $yt$-plane.
  \item {\it The case that $l$ is lightlike}: We may assume that $l:=\text{span}\{e_2+e_3\}$. Then every element of the connected component containing the identity of the isometry group which fixes the line $l$ is written as 
\begin{center}
$  A= \left(
    \begin{array}{ccc}
      1 & \theta & -\theta \\
      -\theta  & 1-\frac{\theta^2}{2}  & \frac{\theta^2}{2} \\
      -\theta & -\frac{\theta^2}{2} & 1+\frac{\theta^2}{2}
 \end{array}
  \right) $.
  \end{center}
In this case, the circles are parabolas written as 
  \begin{equation*}
  \alpha (s)=c+se_1+\frac{rs^2}{2}(e_2+e_3),\quad r\neq0.
    \end{equation*}
Each circle lies in a lightlike plane which is parallel to the plane $\text{span}\{e_1, e_2+e_3\}$.
\end{enumerate}

\subsection{ZMC surfaces of Riemann type which are foliated by Euclidean circles}The surfaces foliated by circles in spacelike parallel planes are written as 
 \begin{equation}\label{eq:1}
X(u,\theta) =(f(u)+r(u)\cos \theta, g(u)+r(u)\sin \theta, u),\quad u \in J,\; \theta \in \mathbb{R}/(2\pi \mathbb{Z}), 
\end{equation}
where $f$, $g$, $r$ are smooth functions in $u$ defined on some interval $J$ satisfying $r>0$.
\begin{theorem}\label{Theorem:Parameterizations of ZMC surfaces Foliated by circles}
  The surface (\ref{eq:1}) is a ZMC surface of Riemann type if and only if the surface is congruent to the following
parametrized surface.
\begin{enumerate}
 \item General type:
  \begin{equation}
X(r, \theta ) = \left(\displaystyle\int^r_{r_0} \cfrac{as^2}{\Delta (s)}\,ds+r\cos\theta ,r\sin\theta , \displaystyle\int ^r_{r_0}\cfrac{1}{\Delta (s)}\,ds \right),\quad r, r_0\in I ,\; \theta \in \mathbb{R}/(2\pi \mathbb{Z}),\label{eq:splike}
\end{equation}
where $\Delta(s):=\sqrt{a^2s^4+bs^2+1}, \  a >0$, $b \in \mathbb{R}$, and $I$ is a certain interval in $\mathbb{R}$. \\
The condition $a \neq 0$ is equivalent to the condition that the surface is non-rotational. The interval $I$ is determined by the condition that $\Delta$ is positive.
\item Singular type:
\begin{equation}
X(u, \theta) =\left(u+\frac{1}{\sqrt{a}}\cos \theta , \frac{1}{\sqrt{a}}\sin \theta , u\right),\quad\left(u, \theta \right) \in \mathbb{R}\times \mathbb{R}/(2\pi \mathbb{Z}), \label{eq:Singular_splike}
\end{equation}
where $ a >0$.
\end{enumerate}
\end{theorem}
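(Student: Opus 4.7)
My strategy is to write out the ZMC equation $H = 0$ for the parametrization (\ref{eq:1}), expand in Fourier modes in $\theta$ to obtain a coupled ODE system in $u$, and then split into two cases according to whether the radius $r(u)$ is non-constant or constant. Concretely, I would first compute $X_u$, $X_\theta$, the first fundamental form $E, F, G$, a Lorentzian normal proportional to $X_u \times X_\theta$, and the second fundamental form coefficients $L, M, N$. Each of these is a trigonometric polynomial in $\theta$ whose coefficients depend on $f, g, r$ and their derivatives, so after clearing the normalization factor, the equation $EN - 2FM + GL = 0$ becomes a finite Fourier identity in $\theta$ whose Fourier coefficients must each vanish. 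Congruences of $\mathbb{L}^3$ simplify the system: horizontal translations absorb integration constants in $f, g$, and a rotation about the $t$-axis (a Lorentz isometry) rotates the pair $(f, g)$ in the $xy$-plane.

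\textbf{Case 1 (non-constant radius).} On an open interval where $r'(u) \neq 0$, I would change parameter from $u$ to $r$. The Fourier-separated ODEs then admit a first integral which, after a rotation in the $xy$-plane chosen to eliminate $g_r$, gives the parametrization (\ref{eq:splike}) with $\Delta(s) = \sqrt{a^2 s^4 + b s^2 + 1}$. This reduction is essentially carried out in \cite{Lopez1} and \cite[Proposition 3]{HKT}, which I would cite and follow; the constant $a > 0$ labels a conserved quantity, $b \in \mathbb{R}$ is the remaining free parameter, and $a = 0$ reduces to a rotational surface, so $a \neq 0$ corresponds precisely to non-rotationality. The maximal interval $I$ is forced by the condition $\Delta > 0$.

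\textbf{Case 2 (constant radius).} Suppose $r \equiv r_0 > 0$. Then $X_u = (f', g', 1)$ is independent of $\theta$, so $X_{u\theta} = 0$ and $M = 0$, and a direct Lorentzian cross product computation shows that the normal direction is parallel to $(\cos\theta, \sin\theta, f'\cos\theta + g'\sin\theta)$. Then $L$ is proportional to $f''\cos\theta + g''\sin\theta$ while $N$ is proportional to $-r_0$, so the ZMC condition $EN + GL = 0$ collapses, after clearing the common normalization factor, to
\[
(f')^2 + (g')^2 - 1 \;=\; r_0\bigl(f''\cos\theta + g''\sin\theta\bigr).
\]
Since the left side is $\theta$-independent, separating the constant, $\cos\theta$, and $\sin\theta$ modes gives $f'' \equiv 0$, $g'' \equiv 0$, and $(f')^2 + (g')^2 = 1$. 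Hence $f, g$ are affine and the axis curve $(f(u), g(u), u)$ is a lightlike straight line. Translating kills the constant terms and a rotation in the $xy$-plane aligns $(f', g')$ with $(1, 0)$, so $f(u) = u$, $g(u) = 0$; setting $a = 1/r_0^2$ yields exactly (\ref{eq:Singular_splike}).

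For the converse, I would verify by direct computation that each of (\ref{eq:splike}) and (\ref{eq:Singular_splike}) is foliated by Euclidean circles in horizontal spacelike parallel planes and satisfies $H = 0$, which is routine. The main obstacle is the Fourier bookkeeping and subsequent integration producing $\Delta$ in Case 1, but this is essentially the content of \cite{Lopez1, HKT}. The novelty is Case 2, where the simplification $X_{u\theta} = 0$ makes the Fourier expansion transparent and the ZMC condition forces (rather than merely permits) the axis to be lightlike — the feature that distinguishes the singular type from anything appearing in the classical Euclidean Riemann family.
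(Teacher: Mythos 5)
Your proposal is correct and follows essentially the same route as the paper: reduce the ZMC condition for the parametrization (\ref{eq:1}) to the Fourier-separated ODE system of L\'opez and Honda--Koiso--Tanaka (the system recorded in Remark \ref{remark:ODE1}), obtain the general type (\ref{eq:splike}) from the non-constant-radius solutions, and obtain the singular type (\ref{eq:Singular_splike}) from the constant-radius solutions. Your Case 2 computation, yielding $f''=g''=0$ and $(f')^2+(g')^2=1$ so that the axis is a lightlike line with $a=1/r_0^2$, is exactly the constant-$r$ specialization the paper performs via $a^2r^4=1$ and $f'=ar^2=1$.
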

 \begin{remark} \label{remark:ODE1}
The condition that the surface written as (\ref{eq:1}) is a ZMC surface if and only if, after a suitable translation and rotation around the $t$-axis, the functions $r$, $f$, and $g$ satisfy the following system of ODEs (see the equations (59) and (60) in the proof of \cite[Proposition 3]{HKT}):
\begin{align}
f'=ar^2, \nonumber\\
g=0,\nonumber\\
a^2r^4-1-rr''+r'^2=0.\nonumber
\end{align}
In Theorem \ref{Theorem:Parameterizations of ZMC surfaces Foliated by circles}, general type and singular type surfaces correspond to solutions of the system of ODEs above with non-constant $r$ and constant $r$, respectively. In (\ref{eq:1}), we obtained singular type surfaces by taking a parameter $u$ of the function $r$ instead of taking $r$ as an independent variable.
 \end{remark}

\subsection{ZMC surfaces of Riemann type foliated by circles in timelike parallel planes}
The surfaces foliated by circles in timelike parallel planes are written as
  \begin{align}
  {\rm Type\ I}\hspace{0.5cm}  X(u,\theta) =(u, f(u)+r(u)\cosh \theta, g(u)+r(u)\sinh \theta), \label{eq:2} \\
 {\rm Type\ II}\hspace{0.5cm} X(u,\theta) =(u, f(u)+r(u)\sinh \theta, g(u)+r(u)\cosh \theta), \label{eq:3}
\end{align}
where $u \in J$, $v \in \mathbb{R}$, and $J$ is an open interval on $\mathbb{R}$. $r$, $f$, and $g$ are functions on $J$. 
\begin{theorem}\label{Theorem:Parameterizations of ZMC surfaces Foliated by hyperbolas}
 The surfaces (\ref{eq:2}) and (\ref{eq:3}) are ZMC surfaces of Riemann type if and only if the surfaces are congruent to the following
parametrized surfaces, respectively:
  \begin{enumerate}
  \item General type:
    \begin{align}
{\rm Type\ I}\hspace{0.5cm}X(r,\theta)=\left(
\displaystyle\int^r_{r_0} \cfrac{1}{\Delta (s)}\,ds, \int^r_{r_0} \cfrac{as^2}{\Delta (s)}\,ds, \int^r_{r_0} \cfrac{bs^2}{\Delta (s)}\,ds \right) +\left( 0, r\cosh\theta, r\sinh\theta
 \right),  \label{eq:type1}  
 \end{align}
where $r\in I$, $\theta \in \mathbb{R}$, and $\delta$, $a$, $b\in \mathbb{R}$, $(a,b) \neq (0,0)$, $\Delta(s):=\sqrt{(a^2-b^2)s^4+2\delta s^2-1}$. 

 \begin{align}
{\rm Type\ II}\hspace{0.5cm}X(r,\theta)=\left(
\displaystyle\int^r_{r_0} \cfrac{1}{\Delta (s)}\,ds, \int^r_{r_0} \cfrac{as^2}{\Delta (s)}\,ds, \int^r_{r_0} \cfrac{bs^2}{\Delta (s)}\,ds \right) +\left( 0, r\sinh\theta, r\cosh\theta
 \right), \label{eq:type2}
 \end{align}  
where $r\in I$, $\theta \in \mathbb{R}$, and $\delta$, $a$, $b\in \mathbb{R}$, $(a,b) \neq (0,0)$, $\Delta(s):=\sqrt{(-a^2+b^2)s^4-2\delta s^2+1}$. \\
 The condition $(a,b) \neq (0,0)$ is equivalent to the condition that the surface is non-rotational. The interval $I$ is determined by the condition that $\Delta$ is positive.
\item Singular type:
\begin{align}
{\rm Type\ I}\hspace{0.5cm}X(u, \theta) =\frac{u}{c} \left(c, a, b\right) +\frac{1}{\sqrt{c}}\left( 0, \cosh\theta, \sinh\theta \right), \label{eq:Singular_hyperbpla1} \\
{\rm Type\ II}\hspace{0.5cm}X(u, \theta) =\frac{u}{c} \left(c, a, b\right) +\frac{1}{\sqrt{c}}\left( 0, \sinh\theta, \cosh\theta \right), \label{eq:Singular_hyperbpla2}
\end{align}
where $-a^2+b^2>0$ and $c:=\sqrt{-a^2+b^2}$.
\end{enumerate}
\end{theorem}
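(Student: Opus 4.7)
The plan is to set up the ansatz surfaces (\ref{eq:2}) and (\ref{eq:3}), impose $H\equiv 0$, and extract a system of ODEs in $u$ for the three unknown functions $r,f,g$. For Type I, I would first compute
\begin{equation*}
X_u=(1,f'+r'\cosh\theta,g'+r'\sinh\theta),\qquad X_\theta=(0,r\sinh\theta,r\cosh\theta),
\end{equation*}
from which $G=\langle X_\theta,X_\theta\rangle=-r^2$ and a short calculation gives $EG-F^2=-r^2\bigl[1+(r'+f'\cosh\theta-g'\sinh\theta)^2\bigr]$ using the identity $\cosh^2\theta-\sinh^2\theta=1$. The causal character of the surface is thus governed by the sign of $1+(r'+f'\cosh\theta-g'\sinh\theta)^2-r'^2-(f'^2-g'^2)$, but what matters for the ZMC equation is only the numerator of $H$, namely $EN-2FM+GL$.

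Next, I would pair $X_{uu},X_{u\theta},X_{\theta\theta}$ with $X_u\times X_\theta$ and expand $EN-2FM+GL$ as a trigonometric-hyperbolic polynomial in $\cosh k\theta,\sinh k\theta$. Since $H$ must vanish for every $\theta$, each Fourier-type coefficient vanishes independently, producing a system of ODEs in $u$ alone. Using the isometries of $\mathbb{L}^3$ fixing the axis $l=\mathrm{span}\{e_1\}$ (translation along $l$ and Lorentz boosts in the $yt$-plane), I can normalize the base point so that the system reduces to two first-order relations $f'=ar^2$, $g'=br^2$ (these are the conservation laws attached to the two translational symmetries of the foliation) together with one second-order ODE for $r$ alone. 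The constant $(a,b)\neq(0,0)$ parametrizes the non-rotational deformation; $a=b=0$ recovers the rotational case (a Lorentzian catenoid).

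Multiplying the second-order ODE for $r$ by $r'$ and integrating yields a first integral of the form $(r')^2=(a^2-b^2)r^4+2\delta r^2-1=\Delta(r)^2$, with $\delta$ the constant of integration. From here two cases naturally split. If $r'\not\equiv 0$, I reparametrize by $s=r$ so that $du/ds=1/\Delta(s)$, and then $f,g$ are given by $df/ds=as^2/\Delta(s)$, $dg/ds=bs^2/\Delta(s)$, producing the integral formula (\ref{eq:type1}) on the maximal interval $I$ where $\Delta(s)>0$. If instead $r$ is constant, the first integral becomes an algebraic equation in $r$; combined with the original second-order equation, this forces $(a^2-b^2)r^4=-1$, hence $-a^2+b^2>0$ and $r=1/\sqrt{c}$ with $c=\sqrt{-a^2+b^2}$, while $f$ and $g$ are linear in $u$, yielding (\ref{eq:Singular_hyperbpla1}). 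Type II is handled by the identical argument after swapping $\cosh\theta\leftrightarrow\sinh\theta$ in $X_\theta$; this swap flips signs in the computation of $EG-F^2$ and in the first integral, producing $\Delta(s)=\sqrt{(-a^2+b^2)s^4-2\delta s^2+1}$ and the singular-type formula (\ref{eq:Singular_hyperbpla2}).

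The main obstacle is the bookkeeping in the Fourier-type expansion of $EN-2FM+GL$: the hyperbolic cross-terms $\cosh\theta\sinh\theta$, $\cosh^2\theta$, $\sinh^2\theta$ proliferate, and it takes careful use of $\cosh^2\theta-\sinh^2\theta=1$ and of the factorization $1+(r'+\phi)^2$ (with $\phi=f'\cosh\theta-g'\sinh\theta$) already visible in the determinant to bring the ZMC condition into a form where the coefficients separate cleanly into $\theta$-independent ODEs. Once the system $f'=ar^2,\ g'=br^2,\ (r')^2=\Delta(r)^2$ is extracted, the rest is integration; the singular solutions arise precisely because the reparametrization by $r$ breaks down when $r'\equiv 0$, and they must therefore be treated separately in the original $u$-parameter, as indicated in Remark \ref{remark:ODE1}.
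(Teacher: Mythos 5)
Your proposal follows essentially the same route as the paper, which reduces the ZMC condition for the ansatz (\ref{eq:2}), (\ref{eq:3}) to the ODE systems $f'=ar^2$, $g'=br^2$, $1+(a^2-b^2)r^4\pm(r'^2-rr'')=0$ quoted in Remark \ref{remark:ODE2} (citing L\'opez), then obtains the general type by reparametrizing by $r$ via the first integral $(r')^2=\Delta(r)^2$ and the singular type from the constant-$r$ branch where $1+(a^2-b^2)r^4=0$ forces $r=1/\sqrt{c}$; all of your formulas, including $\Delta$ and the sign flips between Types I and II, check out. Two cosmetic quibbles: the first integral comes from the integrating factor $2r'/r^3$ rather than from multiplying by $r'$ alone (though the stated form is correct and verifiable by differentiation), and the relations $f'=ar^2$, $g'=br^2$ arise directly as first integrals of the coefficient equations of the ZMC expansion rather than from an isometry normalization.
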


 \begin{remark} \label{remark:ODE2}
The surface (\ref{eq:2}) and the surface (\ref{eq:3}) are ZMC surfaces if and only if the functions $r$, $f$, and $g$ satisfy the following systems of ODEs, respectively (see the equations (24)--(26) and (28)--(30) in the proof of \cite[Theorem 1]{Lopez1}):
\begin{align}
f'=ar^2, \nonumber\\
g'=br^2,\nonumber\\
1+(a^2-b^2)r^4+r'^2-rr''=0,\nonumber
\end{align}
and
\begin{align}
f'=ar^2, \nonumber\\
g'=br^2,\nonumber\\
1+(a^2-b^2)r^4-r'^2+rr''=0.\nonumber
\end{align}
In Theorem \ref{Theorem:Parameterizations of ZMC surfaces Foliated by hyperbolas}, general type and singular type surfaces correspond to solutions of the systems of ODEs above with non-constant $r$ and constant $r$, respectively. In (\ref{eq:2}) and (\ref{eq:3}), we have obtained singular type surfaces by taking a parameter $u$ of the function $r$ instead of taking $r$ as an independent variable.
\end{remark}
 
\begin{remark}
As well as the parametrizations (\ref{eq:splike}) and (\ref{eq:Singular_splike}), we can normalize the constants $a$ and $b$ in (\ref{eq:type1})--(\ref{eq:Singular_hyperbpla2}) as follows. When $|a|\neq |b|$, after an isometry in $\mathbb{L}^3$ which fixes the axis $l=\text{span}\{e_1\}$ and a reparametrization of the surface $X$, we can take $a=0$ or $b=0$ depending on the relation $|a|<|b|$ or $|a|>|b|$. For the case $|a|=|b|$, we can also normalize $(a,b)=(1,1)$. 
\end{remark}

 \subsection{ZMC surfaces of Riemann type foliated by circles in lightlike parallel planes}
  The surfaces foliated by circles in lightlike parallel planes are written as 
  \begin{equation}X(u,v) =\left(f(u)+v, g(u)+u+\cfrac{r(u)v^2}{2}, g(u)-u+\cfrac{r(u)v^2}{2}\right), \label{eq:4}
\end{equation}
where $u \in J$, $v \in \mathbb{R}$, and $f$, and $g$ are functions on $J$, and $r$ is a function which has no zeros. $J$ is the set of open intervals in $\mathbb{R}$ where all of the functions $r$, $f$, and $g$ in Theorem \ref{thm:lightlike} are well-defined.
\begin{theorem}\label{thm:lightlike}
  The surface (\ref{eq:4}) is a ZMC surface of Riemann type if and only if the triple \{$r$, $f$, $g$\} satisfies one of the following four conditions.
    \begin{itemize}
\item[(1)] General type:
  \begin{itemize}
 \item[(i)] $r(u)=\cfrac{1}{-2u+c}$, \\
$f(u)=\cfrac{4b}{3}u^3-2bcu^2+bc^2u $, \\
$g(u)=\displaystyle \int^u_0 (b^2(s^2-cs)+p)(c-2s)^2ds$,\\
$b\in \mathbb{R}\setminus \{0\}, c, p \in \mathbb{R}$.\\
The condition $b\neq 0$ is equivalent to the condition that the surface is non-rotational (see Proposition \ref{A_Prop} in Appendix \ref{A}).

\item[(ii)] $r(u)=\sqrt{\cfrac{a}{2}}\tan\{ \sqrt{2a} (u+c) \}$, \\
$f(u)=-\cfrac{\sqrt{2}b}{a^{\frac{3}{2}}}\cot\{ \sqrt{2a} (u+c) \}-\cfrac{2b}{a}(u+c)$, \\
$g(u)=-\cfrac{b^2}{\sqrt{2}a^{\frac{5}{2}}}\cot\{ \sqrt{2a} (u+c) \}+\cfrac{p}{4\sqrt{2a}}\sin\{ 2\sqrt{2a}(u+c) \} +(\cfrac{p}{2}-\cfrac{b^2}{a^2})(u+c)$, \vspace{0.1cm} \\
$a>0, b, c, p \in \mathbb{R}$.  \\

\item[(iii)] $r(u)=\sqrt{\cfrac{-a}{2}}\tanh\{ \sqrt{-2a} (-u+c) \}$, \\
$f(u)=-\cfrac{\sqrt{2}b}{a\sqrt{-a}}\coth\{ \sqrt{-2a} (-u+c) \}-\cfrac{2b}{a}(u+c)$, \\
$g(u)=-\cfrac{b^2}{a^2 \sqrt{-2a}}\coth\{ \sqrt{-2a} (-u+c) \} -\cfrac{p}{4\sqrt{-2a}}\sinh\{ 2\sqrt{-2a}(-u+c) \} +(\cfrac{b^2}{a^2}-\cfrac{p}{2})(-u+c)$, \vspace{0.1cm} \\
$a<0, b, c, p \in \mathbb{R}$.  \\
\end{itemize}

\item[(2)] Singular type: \\
$r(u)=\sqrt{\cfrac{-a}{2}}$, \\
$f(u)=-\cfrac{2b}{a}u $, \\
$g(u)=pe ^{-2\sqrt{-2a}u}-\cfrac{b^2}{a^2}u$,  \vspace{0.1cm} \\
$a<0, b, p \in \mathbb{R}$.\\
\end{itemize}
\end{theorem}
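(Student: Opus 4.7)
The plan is to impose $H=0$ on the parametrization (\ref{eq:4}), derive a system of ODEs for $r$, $f$, $g$, and then integrate it. A direct computation gives $E = (f')^2 + 4g' + 2r'v^2$, $F = f' + 2rv$, $G = 1$, as well as
\[
\langle X_{uu}, X_u\times X_v\rangle = 2f''rv - 2g'' - r''v^2,\quad \langle X_{uv}, X_u\times X_v\rangle = -2r'v,\quad \langle X_{vv}, X_u\times X_v\rangle = -2r.
\]
Consequently the numerator of the mean curvature, $EN - 2FM + GL$ (cleared of the causal-character-dependent normalization), is a polynomial in $v$ of degree two whose coefficients depend only on $u$, and $H\equiv 0$ is equivalent to the simultaneous vanishing of each such coefficient.

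Setting the coefficients of $v^2$, $v^1$, and $v^0$ to zero yields the system
\[
r'' = 4rr',\qquad (r^2 f')' = 0,\qquad g'' + 4rg' + r(f')^2 = 0.
\]
The first equation integrates once to $r' = 2r^2 + a$ for a constant $a\in\mathbb{R}$; the second to $r^2 f' = b$ for a constant $b\in\mathbb{R}$; and substituting $f' = b/r^2$ turns the third into the linear inhomogeneous ODE $g'' + 4rg' = -b^2/r^3$. The classification thus reduces to integrating these three equations.

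The solutions are organized by the sign of $a$: for $a=0$ one obtains $r=1/(c-2u)$, yielding case (1)(i); $a>0$ gives the $\tan$ solution of case (1)(ii); $a<0$ with $r$ non-constant gives the $\tanh$ solution of case (1)(iii); and the remaining case $a<0$ with $r$ constant, namely $r\equiv \sqrt{-a/2}$, produces the singular type (2). The last one is not captured by the general-type formulas because those express $f$ and $g$ as functions of $r$ through a change of independent variable $u\mapsto r$ (cf.\ Remark \ref{remark:ODE1}), and so must be treated separately in the variable $u$. For each $r$, the function $f$ is obtained by a single quadrature of $f'=b/r^2$, and $g$ by integrating the linear ODE above; a translation of $X$ in $\mathbb{L}^3$ absorbs one constant of integration, leaving the parameter $p$ in the stated formulas. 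The converse direction is a direct verification that each listed triple $(r,f,g)$ satisfies the three ODEs and hence $H\equiv 0$.

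The main obstacle will be the explicit integration of the $g$-ODE in cases (1)(ii), (1)(iii), and (2): the forcing term $-b^2/r^3$ is trigonometric, hyperbolic, or exponential, and the homogeneous equation $h'+4rh=0$ for $h=g'$ has the non-elementary integrating factor $\exp(4\int r\,du)$, which however becomes tractable using the first integral $r'-2r^2=a$. Matching the resulting particular solutions and the one-parameter family of homogeneous contributions (encoded by $p$) to the precise formulas stated in the theorem is the principal bookkeeping challenge. A secondary point is to confirm that each listed family is genuinely non-rotational --- equivalently, that it is not invariant under the isometry group fixing the lightlike axis $l=\mathrm{span}\{e_2+e_3\}$ --- which reduces to verifying that the centers of the parabolas do not all lie on a single lightlike line, a condition encoded by $b\neq 0$ in case (i) and by analogous conditions in the remaining cases.
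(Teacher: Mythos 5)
Your proposal is correct and follows essentially the same route as the paper: the paper reduces Theorem \ref{thm:lightlike} to exactly the system (\ref{eq:r})--(\ref{eq:g}) of Remark \ref{remark:ODE3} (citing L\'opez for the derivation, which you instead carry out directly and correctly — your coefficients of $v^2$, $v$, $v^0$ reproduce $r'=2r^2+a$, $r^2f'=b$, and $b^2/r^3+4rg'+g''=0$), and then integrates case by case on the sign of $a$, with the singular type arising from the constant solution $r\equiv\sqrt{-a/2}$ exactly as you describe. The remaining work you flag (the quadratures for $g$ and the non-rotationality criterion via Proposition \ref{A_Prop}) is precisely what the paper's treatment consists of.
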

\begin{remark} \label{remark:ODE3}
The functions $r$, $f$, and $g$ are the solutions of following ODEs (see the equations (35)--(38) in the proof of \cite[Theorem 1]{Lopez1}):
\begin{align}
r'=2r^2+a \label{eq:r}, \\
r^2f'=b \label{eq:f}, \\
\cfrac{b^2}{r^3}+4rg'+g''=0. \label{eq:g}
\end{align}
(\ref{eq:r}), (\ref{eq:f}), and (\ref{eq:g}) are equivalent to the condition that the surface $X$ is a ZMC surface, and the cases (i), (ii), and (iii) are corresponding to the cases $a=0$, $a>0$, and $a<0$ in the equation ({\ref{eq:r}}). In Theorem \ref{thm:lightlike}, general type and singular type surfaces correspond to solutions of the system of ODEs above with non-constant $r$ and constant $r$, respectively. In (\ref{eq:4}), we have obtained singular type surfaces by taking a parameter $u$ of the function $r$ instead of taking $r$ as an independent variable.
 \end{remark} 
\begin{remark} 
In Theorem \ref{thm:lightlike}, we used different integral constants from constants in Theorem 1 in \cite{Lopez1} so that our expression is more convenient to check the causal characters of surfaces. We also corrected some typographical errors in \cite{Lopez1}. 
 \end{remark} 
 \section{Causal characters of ZMC surfaces of Riemann type\label{Maintheorem}}
In this section we classify ZMC surfaces of Riemann type constructed by the previous section by their causal characters.
 
 \subsection{Causal characters of ZMC surfaces of Riemann type foliated by Euclidean circles\label{circle}}
\begin{theorem}\label{thm:causalofspacelike}
Causal characters of ZMC surfaces given by (\ref{eq:splike}) or (\ref{eq:Singular_splike}) and the maximal interval of $r$ are determined as follows.
\begin{enumerate}
\item General type:
\begin{itemize}
\item[(i)] The surface $X$ is spacelike if and only if $2a<b$.
\item[(ii)] The surface $X$ is timelike if and only if 
\begin{center}
$b< -2a$ and $0<r^2<((-b-\sqrt{b^2-4a^2})/2a^2)$.\end{center}
\item[(iii)] The surface $X$ has both a spacelike part and a lightlike part and it does not have a timelike part if and only if $b=2a$. Moreover, in this case the lightlike part of the surface is a part of a straight line.
\item[(iv)] The surface $X$ has both a timelike part and a lightlike part and it does not have a spacelike part if and only if $b=-2a$ and $0<r^2<1/a$. Moreover, in this case the lightlike part of the surface is a part of a straight line.
\item[(v)] The surfaces except those stated above are surfaces which have all causal characters. Moreover, in the case that $b=-2a$ and $r^2>1/a$, the lightlike part of the surface consists of a part of a straight line and two non-degenerate null curves.
\end{itemize}
\item Singular type: \\
Every singular type surface has timelike part and two lightlike lines and it does not have spacelike part.
\end{enumerate}
\end{theorem}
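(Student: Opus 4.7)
The plan is to reduce the full classification to a pointwise sign analysis of a single real-valued function. For the general-type parametrization \eqref{eq:splike}, I would first compute $X_r$ and $X_\theta$ directly and, using the identity $\Delta(r)^2 = a^2 r^4 + b r^2 + 1$ to cancel cross terms, extract the factorization
\begin{equation*}
EG - F^2 \;=\; \frac{r^4}{\Delta(r)^2}\,\phi(r,\theta),\qquad \phi(r,\theta) \;:=\; a^2 r^2(1+\cos^2\theta) + 2a\Delta(r)\cos\theta + b,
\end{equation*}
so that the causal character at each point agrees with the sign of $\phi$. I would then view $\phi$ as an upward-opening quadratic in $c := \cos\theta$ on $[-1,1]$, with vertex $c_0 = -\Delta/(ar^2)$, minimum value $\phi(c_0) = -1/r^2$, and boundary values $\phi(\pm 1) = 2a^2 r^2 \pm 2a\Delta(r) + b$.

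The whole case analysis hinges on the identity
\begin{equation*}
\phi(1)\,\phi(-1) \;=\; (2a^2 r^2 + b)^2 - 4a^2\Delta(r)^2 \;=\; b^2 - 4a^2,
\end{equation*}
which cleanly separates the regimes $|b| \gtrless 2a$ and the thresholds $b = \pm 2a$. For $b > 2a$, I expect $\phi(\pm 1) > 0$ and $c_0 < -1$ (because $br^2 + 1 > 0$ forces $\Delta > ar^2$), so $\phi$ is increasing and positive on $[-1,1]$; this is (i). For $b < -2a$, the two endpoint values share a sign by the identity above, but the interior minimum may enter $[-1,1]$; comparing $r^2$ with the roots $x_\pm = (-b\pm\sqrt{b^2-4a^2})/(2a^2)$ of $\Delta^2 = 0$, I expect $0 < r^2 < x_-$ to make both $\phi(\pm 1) < 0$ (hence $\phi < 0$ everywhere, giving (ii)) and $r^2 > x_+$ to push $c_0$ into $(-1,0)$ while keeping the endpoint values positive (a sub-case of (v)). The band $|b| < 2a$ gives $\phi(1)\phi(-1) < 0$, so $\phi$ changes sign immediately, placing it in (v). At the thresholds $b = \pm 2a$, the radicand becomes a perfect square, $\Delta = |ar^2 \pm 1|$, and $\phi$ factors explicitly as
\begin{equation*}
\phi = a(1+c)\bigl(ar^2(1+c)+2\bigr)\;\;(b=2a),\qquad \phi = a(c-1)\bigl(ar^2(c-1)+2\bigr)\;\;(b=-2a,\; r^2 < 1/a),
\end{equation*}
so the lightlike locus reduces to the single curve $\theta = \pi$ or $\theta = 0$. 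Substituting that value of $\theta$ back into $X$ and using the closed form of $\Delta$, the primitives telescope and the curve is visibly a straight line, proving (iii) and (iv). The same factorization in the exceptional subcase $b=-2a$ with $r^2 > 1/a$ exhibits a second root $c = 2/(ar^2)-1 \in (-1,1)$, accounting for the two extra null curves of (v).

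The singular-type case (2) is much shorter. Since $X_u = (1,0,1)$ is a constant lightlike vector, a one-line computation yields $E=0$, $F = -\sin\theta/\sqrt{a}$, $G = 1/a$, hence $EG-F^2 = -\sin^2\theta/a \le 0$ with equality precisely at $\theta \in \{0,\pi\}$; the two lightlike loci are the translates of $\mathbb{R}(1,0,1)$ by $\pm(1/\sqrt{a},0,0)$ and are therefore lightlike straight lines. The main technical obstacle lies in the subcase analysis of (v), and specifically in verifying for $b=-2a$, $r^2 > 1/a$ that the lightlike locus is genuinely one straight line together with two \emph{non-degenerate} null curves rather than a degenerate configuration; this will require not only locating the zeros of $\phi$ via the factorization above but also checking, along $\cos\theta = 2/(ar^2)-1$, that the curve's velocity is lightlike and not proportional to its acceleration. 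The identity $\phi(1)\phi(-1) = b^2 - 4a^2$ is what keeps the full four-parameter sign analysis tractable.
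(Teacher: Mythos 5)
Your proposal is correct and follows essentially the same route as the paper: both compute $EG-F^2$ from the explicit parametrization, use $\Delta(r)^2=a^2r^4+br^2+1$ to factor it, and classify by a sign analysis in $\cos\theta$ (your quadratic $\phi$ is exactly $-\xi\eta/r^2$ for the paper's factors $\xi=1+ar^2\cos\theta+\Delta$, $\eta=1-ar^2\cos\theta-\Delta$, and your identity $\phi(1)\phi(-1)=b^2-4a^2$ plays the role of the paper's relation $\xi+\eta=2$), with the lightlike loci and the telescoping of the integrals handled identically in both arguments. The only substantive differences are that the paper cites Honda--Koiso--Tanaka for parts (i)--(ii) where you argue directly, and that the paper, like you, leaves the non-degeneracy of the two extra null curves in (v) as a brief direct check.
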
 
\begin{proof} 
 First we prove (1). (i) and (ii) are special cases of (II) and (III) of \cite[Proposition 3]{HKT}, respectively. Now, we consider the other cases. Since the normal vector of the surface (\ref{eq:splike}) is 
 \[
 X_r \times X_\theta =\left(-\frac{r\cos \theta}{\Delta (r)}, -\frac{r\sin \theta}{\Delta (r)}, -r-\frac{ar^3\cos \theta}{\Delta (r)}\right)
 \], the determinant of the metric with respect to the local coordinate system $(r,\theta)$ is
\begin{equation}
EG-F^2=-\cfrac{r^2}{\Delta (r)^2} \left(1+ar^2\cos \theta +\Delta (r) \right) \left(1-ar^2\cos \theta -\Delta (r) \right). \label{eq:det}
\end{equation}
Set $\xi(r, \theta):=1+ar^2\cos \theta +\Delta (r)$ and $\eta(r, \theta):=1-ar^2\cos \theta -\Delta (r)$.

First, we determine the condition that a surface $X$ has both a spacelike part and a lightlike part. Since $\xi$ and $\eta$ are continuous functions on the connected domain $I \times \mathbb{R}$ and $\xi +\eta =2$ holds, if $\xi$ or $\eta$ changes its sign, $EG-F^2$ also changes its sign, and  $\xi >0$ for some point of the domain $I \times \mathbb{R}/(2\pi \mathbb{Z})$. Therefore, the condition that $EG-F^2\geq 0$ on $I \times \mathbb{R}$ is equivalent to the following:
\begin{equation} 
\xi \geq0\text{ and }\eta \leq0\text{ on }I \times \mathbb{R}. \label{eq:xi and eta}
\end{equation}
The first condition is equivalent to $\Delta (r) >ar^2-1$ on $I$, and the second one is equivalent to $\Delta (r) >ar^2+1$ on $I$, that is, (\ref{eq:xi and eta}) is equivalent to $b\geq2a$. In this case, $\Delta$ is positive for every $r\in \mathbb{R}$, and therefore we obtain $I=\mathbb{R}$. Since the condition $b>2a$ is the spacelike condition (i), the surface has both a spacelike part and a lightlike part if and only if $b=2a$. Next, we determine the lightlike part of the surface. When $b=2a$, $\Delta=ar^2+1$ and so $I=\mathbb{R}$. In this case we can take $r_0=0$ in the equation (\ref{eq:splike}). By (\ref{eq:det}), 
\begin{equation}
EG-F^2=\cfrac{r^2}{\Delta (r)^2} \left(2+ar^2\cos \theta +ar^2\right) \left(ar^2\cos \theta +ar^2\right).  \nonumber
\end{equation}
Since $ar^2\cos \theta +ar^2\geq 0$, the surface has the following lightlike part:
\begin{equation*}
c(r)=(x(r), y(r), z(r))=X(r, \pi) = \left(\displaystyle\int^r_0 \cfrac{as^2}{as^2+1}\,ds-r, 0, \displaystyle\int ^r_0\cfrac{1}{as^2+1}\,ds \right),
\end{equation*}
Since $x(r)+z(r)=0$, the lightlike part is a part of a straight line. The proof of (iii) has been completed.

 Second, we determine the condition that a surface $X$ has both timelike part and lightlike part. The condition that $EG-F^2\geq 0$ on $I \times \mathbb{R}$ is equivalent to the following:
\begin{equation} 
\xi \geq0\text{ and }\eta \geq0\text{ on }I \times \mathbb{R}. \label{eq:xi and eta2}
\end{equation}
The first condition is equivalent to $\Delta (r) \geq ar^2-1$ on $I$, and the second one is equivalent to $1-ar^2\geq \Delta (r)$ on $I$, that is, (\ref{eq:xi and eta2}) is equivalent to $b\leq -2a$ and $r^2<\frac{1}{a}$. By considering the condition $\Delta >0$, this is equivalent to $b\leq -2a$ and $r^2<((-b-\sqrt{b^2-4a^2}/2a^2)$. Since the condition $b<-2a$ is the timelike condition (ii), the surface has both a timelike part and a lightlike part if and only if  $b=-2a$ and $r^2<1/a$. In this case we can take $r_0=0$ in the equation (\ref{eq:splike}). Next, we determine the lightlike part of the surface. In this case, since $\Delta=1-ar^2$, (\ref{eq:det}) becomes
\begin{equation}
EG-F^2=-\cfrac{r^2}{\Delta (r)^2} \left(2+ar^2\cos \theta -ar^2\right) \left(-ar^2\cos \theta +ar^2\right) . \nonumber
\end{equation}
Since $2+ar^2\cos \theta -ar^2\geq 2(1-ar^2)>0$ and $-ar^2\cos \theta +ar^2\geq 0$, $EG-F^2\geq 0$ holds, that is, the surface has timelike part and the following lightlike part:
\begin{equation*}
c(r)=(x(r), y(r), z(r))=X(r,2\pi) = \left(\displaystyle\int^r_0 \cfrac{as^2}{1-as^2}\,ds+r, 0, \displaystyle\int ^r_0\cfrac{1}{1-as^2}\,ds \right),
\end{equation*}
Since $x(r)-z(r)=0$, the lightlike part is a part of a straight line. The proof of (iv) has been completed.

Third, we determine the condition that a surface $X$ has all causal characters. Since $a\neq 0$, $EG-F^2$ is not identically zero. Therefore, there is no lightlike surface. This means that surfaces except those stated above are surfaces which have all causal characters. Now, assume that the conditions $b=-2a$ and $r^2>1/a$ hold.
Since $\Delta=ar^2-1$, (\ref{eq:det}) becomes
\begin{equation}
EG-F^2=-\cfrac{r^2}{\Delta (r)^2} \left(ar^2\cos \theta +ar^2\right) \left(2-ar^2\cos \theta -ar^2\right).\nonumber
\end{equation}
Note that $\xi (r,\theta)=ar^2\cos \theta +ar^2\geq 0$ and the continuous function $\eta(r, \theta)=2-ar^2\cos \theta -ar^2$ satisfies
\begin{equation}
2(1-ar^2)\leq \eta(r, \theta)\leq 2. \nonumber
\end{equation}
Since $1-ar^2< 0$, $EG-F^2$ changes its sign. Therefore, the surface changes its causal characters.
Moreover, the surface is lightlike on the domain which satisfies either $\xi (r,\theta)= 0$, that is,
\begin{equation*}
\theta=(2k+1)\pi ,\ k \in \mathbb{Z},
\end{equation*}
or $\eta(r, \theta)=0$. When the case that $\xi (r,\theta)= 0$, the lightlike part is a part of a straight line as in the case (iv), and then the surface is timelike along the straight line. Next we consider the case that $\eta(r, \theta)=0$. Since $r\cos \theta =(2-ar^2)/ar$, the lightlike part is written as the following two curves:
\begin{equation}
c(r)=X(r, \theta (r))=\left( \displaystyle\int^r_{r_0} \cfrac{as^2}{as^2-1}\,ds+\cfrac{2-ar^2}{ar} ,\pm r\sqrt{1-\left(\cfrac{2-ar^2}{ar^2} \right) ^2} , \displaystyle\int ^r_{r_0}\cfrac{1}{as^2-1}\,ds\right) , \nonumber
\end{equation}
and these curves are not straight lines. The proof of (v) and so the proof of (1) have been completed.
 
 Now we prove (2). By a straightforward computation, the determinant of the induced metric of the surface (\ref{eq:Singular_splike}) with respect to the local coordinate system $(u, \theta)$ is $-(1/a)\sin ^2 \theta$. Therefore, the surface has a timelike part and the following two lightlike lines:
\begin{equation}
 c_+(u)=X(u, 0)=u \left( 1, 0, 1 \right )+\cfrac{1}{\sqrt{a}}\left( 1, 0, 0 \right),\quad c_-(u)=X(u, \pi)=u \left(1, 0, 1 \right )-\cfrac{1}{\sqrt{a}}\left( 1, 0, 0 \right) .\nonumber
\end{equation}
\end{proof}
At the end of this subsection, it should be noted that there is a relationship between singular type surfaces and the asymptotic behaviors of some general type surfaces. If we consider a ZMC surface of Riemann type which belongs to the class (iv) in Theorem \ref{thm:causalofspacelike}, this surface has the following implicit form:
\begin{equation*}
\left(x-t+r(t)\right)^2+y^2=r(t)^2, r(t)=\cfrac{1}{\sqrt{a}}\tanh{(\sqrt{a}t)}.
\end{equation*}
By taking limits $r\to \pm (1/\sqrt{a})$ on the surface (which correspond to $t\to \pm \infty$), we obtain 
\begin{equation*}
\left(x-t\pm \frac{1}{\sqrt{a}}\right)^2+y^2=\cfrac{1}{a}.
\end{equation*}
These surfaces are nothing but the singular type surfaces (\ref{eq:Singular_splike}) up to a translation in $\mathbb{L}^3$ and Figures 4 and 5 in Section 5.2 are pictures of the above two surfaces.

  \subsection{Causal characters of ZMC surfaces of Riemann type foliated by hyperbolas\label{hyperbola}}
\begin{theorem}\label{thm:causaloftimelike} Causal characters of ZMC surfaces of Riemann type which are given by (\ref{eq:type1}), (\ref{eq:type2}), (\ref{eq:Singular_hyperbpla1}), or (\ref{eq:Singular_hyperbpla2}) and the maximal interval of $r$ are determined as follows.
\begin{itemize}
\item[(1)] General type:
\begin{description}
\item[{\rm Type I}] Timelike on whole domain.
\item[{\rm Type II}] 
\end{description}
\begin{itemize}
\renewcommand{\labelenumi}{(\roman{enumi})}
\item[(i)] There is no Type II spacelike surface on $I \times \mathbb{R}$.
\item[(ii)] The surface $X$ is timelike if and only if  \\
$|b|\geq|a|$, and if we define $c:=\sqrt{-a^2+b^2}$, then one of the following three conditions holds.
\begin{itemize}
\item[(ii-1)] In the case when $a<b$,\\
$|\delta |<c$, $0 \leq c \leq -\delta$, or `$0<c\leq \delta$ and $r^2>\cfrac{\delta +\sqrt{\delta ^2-c^2}}{c^2}$'.
\item[(ii-2)] In the case when $a=b$,\\
$a=b>0$ and $\delta \leq 0$. 
\item[(ii-3)]In the case when $a>b$,\\
$ 0<c < \delta$ and $r^2>\cfrac{\delta +\sqrt{\delta ^2-c^2}}{c^2}$. 
\end{itemize}
\item[(iii)] There is no Type II surface which has both a spacelike part and a lightlike part and it does not have a timelike part.
\item[(iv)] The surface $X$ has both a timelike part and a lightlike part and it does not have a spacelike part if and only if
$|b|>|a|$, and if we define $c:=\sqrt{-a^2+b^2}$, then one of the following two conditions holds.
\begin{itemize}
\item[(iv-1)] In the case when $a<b$, \\
$0<c=\delta $ and $0<r^2<\cfrac{1}{c}$.
\item[(iv-2)] In the case when $a>b$, \\
$0<c=\delta $ and $r^2>\cfrac{1}{c}$.
\end{itemize}
Moreover in this case, the lightlike part of the surface is a part of a straight line.
\item[(v)] The Type II surfaces except those stated above are surfaces which have all causal characters.
\end{itemize}
\item[(2)] Singular type:
\begin{description}
\item[{\rm Type I}] Timelike on whole domain.
\item[{\rm Type II}] The surface has timelike part and a lightlike line  and it does not have spacelike part.
\end{description}
\end{itemize}

\end{theorem}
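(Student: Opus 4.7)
The proof proceeds case-by-case through the four parametrizations (\ref{eq:type1}), (\ref{eq:type2}), (\ref{eq:Singular_hyperbpla1}), (\ref{eq:Singular_hyperbpla2}), mirroring the strategy of the proof of Theorem \ref{thm:causalofspacelike}: for each parametrization compute $E$, $F$, $G$ in the foliation coordinates, simplify $EG-F^{2}$ into a form whose sign is transparent, and translate the three possibilities for the sign of $EG-F^{2}$ into the stated conditions on $a$, $b$, $\delta$, and the range of $r$. The singular-type surfaces are handled by straightforward explicit computation since their parametrizations are linear in $u$, so the bulk of the work is for the general-type surfaces.

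For the general-type Type I surface (\ref{eq:type1}), using $\Delta^{2}=(a^{2}-b^{2})r^{4}+2\delta r^{2}-1$ together with the identity $(a\sinh\theta-b\cosh\theta)^{2}=(a\cosh\theta-b\sinh\theta)^{2}-(a^{2}-b^{2})$ collapses the determinant into the sum of squares
\[
EG-F^{2}=-\frac{r^{2}}{\Delta^{2}}\Bigl\{[\Delta+r^{2}(a\cosh\theta-b\sinh\theta)]^{2}+1\Bigr\},
\]
which is strictly negative, so Type I is timelike on its whole domain. For the general-type Type II surface (\ref{eq:type2}), the analogous computation produces the factorization
\[
EG-F^{2}=\frac{r^{2}}{\Delta^{2}}\,\xi(r,\theta)\,\eta(r,\theta),\qquad \xi+\eta=2,
\]
with $\xi:=1+\Delta-r^{2}(a\sinh\theta-b\cosh\theta)$ and $\eta:=1-\Delta+r^{2}(a\sinh\theta-b\cosh\theta)$. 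Since $\xi+\eta=2$, this is structurally identical to the factorization in the spacelike case of Theorem \ref{thm:causalofspacelike}: the surface is spacelike, timelike, or lightlike according as both $\xi,\eta$ are positive, one is negative, or one vanishes. For item~(i), the key fact is that the range of $\theta\mapsto a\sinh\theta-b\cosh\theta$ for $(a,b)\ne(0,0)$ is unbounded on at least one side---all of $\mathbb{R}$ when $|a|>|b|$, a half-line $(-\infty,-c]$ or $[c,\infty)$ with $c:=\sqrt{-a^{2}+b^{2}}$ when $|a|<|b|$, and a half-line with endpoint $0$ when $|a|=|b|\ne0$---so $\eta$ must become negative for some $\theta$, and no Type II surface is spacelike on all of its domain. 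For items~(ii) and~(iv), the condition that $\xi\eta$ have a consistent sign reduces, after taking the appropriate extremum of $a\sinh\theta-b\cosh\theta$ over $\theta$, to an inequality of the form $\Delta(r)\ge\pm1+r^{2}c$; squaring and using $\Delta^{2}=c^{2}r^{4}-2\delta r^{2}+1$ yields the stated relations among $\delta$, $c$, and $r^{2}$, with subcases (ii-1)--(iv-2) corresponding to the different connected components of $\{r^{2}:\Delta^{2}(r)>0\}$. For the lightlike lines in~(iv), solving $\xi=0$ or $\eta=0$ uniquely determines $\theta$ in terms of $r$; substituting into $X$ and checking that two coordinates are linearly related (as in Theorem \ref{thm:causalofspacelike}\,(iii)--(iv)) shows the locus lies on a straight line. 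The singular Type II surface (\ref{eq:Singular_hyperbpla2}) satisfies $EG-F^{2}=-(a\cosh\theta-b\sinh\theta)^{2}/c^{3}$ by direct computation, vanishing only at the unique $\theta$ with $\tanh\theta=a/b$; this yields the timelike bulk together with a single lightlike line.

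The main obstacle is the case analysis of item~(ii) for the general Type II surface. The maximal domain $\{r^{2}:\Delta^{2}(r)>0\}$ may itself split into two connected components, depending on the sign of the discriminant $\delta^{2}-c^{2}$ of $\Delta^{2}$ viewed as a quadratic in $r^{2}$ and on the sign of $\delta$; on each component the sign of $\xi\cdot\eta$ must be checked consistently, with the two sign choices $\mu>1$ versus $\mu<-1$ distinguishing $a<b$ (using $\mu\ge\Delta+r^{2}c$) from $a>b$ (using $\mu\le\Delta-r^{2}c$), where $\mu:=\Delta-r^{2}(a\sinh\theta-b\cosh\theta)$. The boundary case $|a|=|b|$ in subcase~(ii-2) requires special treatment because the supremum of $a\sinh\theta-b\cosh\theta$ on its half-line range is not attained, and this is precisely where the sharp hypothesis $a=b>0$, $\delta\le 0$ emerges. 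A parallel subtlety arises in~(iv-1)--(iv-2) at the threshold $r^{2}=1/c$, where the extremum of $\mu$ just touches $\pm1$ and must be analyzed by an explicit integration, analogous to the parametric identification of the straight lines in the proof of Theorem \ref{thm:causalofspacelike}.
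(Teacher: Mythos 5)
Your proposal is correct and follows essentially the same route as the paper: the factorization $EG-F^2=\frac{r^2}{\Delta^2}\xi\eta$ with $\xi+\eta=2$ is exactly the paper's $\phi,\psi$ decomposition after the substitution $s=e^{\theta}$ (where $\eta=\phi/2s$, $\xi=-\psi/2s$ and $\phi=\psi+4s$), your extremum analysis of $\theta\mapsto a\sinh\theta-b\cosh\theta$ is equivalent to the paper's discriminant analysis of the quadratics in $s$, and the Type I sum-of-squares, the identification of the lightlike loci as lines, and the singular-type computations all match. The only caveats are cosmetic: in item (i) it may be $\xi$ rather than $\eta$ that becomes negative (depending on which side the range is unbounded), and items (iii) and (v), which you do not mention, follow by the same unboundedness argument as (i) and by the observation that $\xi,\eta$ cannot vanish identically, respectively.
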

\begin{proof} 
 First we prove (1). The Type I surface $X$ is written as (\ref{eq:type1}). By a straightforward calculation, we get
\begin{center}
$X_r \times X_\theta=\left( \cfrac{ar^3\cosh\theta }{\Delta(r)}+r-\cfrac{br^3\sinh\theta}{\Delta(r)}, -\cfrac{r\cosh\theta}{\Delta(r)}, -\cfrac{r\sinh\theta}{\Delta(r)} \right)$,
\end{center}
 and then
 \begin{center}
$\langle X_r \times X_\theta$, $X_r \times X_\theta \rangle =\left( \cfrac{ar^3\cosh\theta }{\Delta(r)}+r-\cfrac{br^3\sinh\theta}{\Delta(r)}\right)^2+\left( \cfrac{r}{\Delta(r)}\right)^2 >0$. 
\end{center}
Therefore, the normal vector of the surface is spacelike, which means that the surface is timelike on whole domain.

  The {\rm Type II} surface $X$ is written as (\ref{eq:type2}). By a straightforward calculation, we get 
  \begin{center}
$X_r \times X_\theta=\left( \cfrac{ar^3\sinh\theta }{\Delta(r)}-r-\cfrac{br^3\cosh\theta}{\Delta(r)}, -\cfrac{r\sinh\theta}{\Delta(r)}, -\cfrac{r\cosh\theta}{\Delta(r)} \right)$, 
\end{center}
and then 
\begin{align}
\langle X_r \times X_\theta, X_r \times X_\theta \rangle &=\left( \cfrac{ar^3\sinh\theta }{\Delta(r)}-r-\cfrac{br^3\cosh\theta}{\Delta(r)}\right)^2-\left( \cfrac{r}{\Delta(r)}\right)^2  \nonumber \\
&=\left( \cfrac{r}{\Delta(r)}\right)^2 \left\{\left( ar^2\sinh\theta-\Delta(r)-br^2\cosh\theta \right)^2-1 \right\} \nonumber \\
&=\left( \cfrac{r}{\Delta(r)}\right)^2 \left( ar^2\sinh\theta-br^2\cosh\theta-\Delta(r)+1 \right) \left( ar^2\sinh\theta-br^2\cosh\theta-\Delta(r)-1 \right).  \nonumber 
\end{align}
If we define a new parameter $s:=e^\theta $, then
\begin{align}&ar^2\sinh\theta-br^2\cosh\theta-\Delta(r)+1 \nonumber \\
&=\cfrac{1}{2s}\left\{ (a-b)r^2s^2+2(-\Delta(r) +1)s-(a+b)r^2 \right\}\text{, and}\nonumber \\
&ar^2\sinh\theta-br^2\cosh\theta-\Delta(r)-1 \nonumber \\
&=\cfrac{1}{2s}\left\{ (a-b)r^2s^2+2(-\Delta(r) -1)s-(a+b)r^2 \right\}. \nonumber
\end{align}
Therefore, if we define $\phi:=\phi(r, s)=(a-b)r^2s^2+2(-\Delta(r) +1)s-(a+b)r^2$ and $\psi:=\psi(r,s)=(a-b)r^2s^2+2(-\Delta(r) -1)s-(a+b)r^2$, then  
\begin{equation}\label{eq:outerproduct}
EG-F^2=-\langle X_r \times X_\theta, X_r \times X_\theta \rangle=-\left( \cfrac{r}{2s\Delta(r)}\right)^2 \phi \psi .
\end{equation}

First, we determine the condition that a Type II surface $X$ is spacelike. By (\ref{eq:outerproduct}) and $\phi=\psi +4s$, the surface $X$ is spacelike on $I \times \mathbb{R}$ if and only if 
\begin{equation}
\phi >0\text{ and }\psi <0\text{ on }I \times \mathbb{R}_{>0}. \label{eq:spacelike}
\end{equation}
Then we get $a=b$, and in this case $\phi$ and $\psi$ are as follows:
\begin{subnumcases}
{}
\phi=2(-\Delta(r) +1)s-2ar^2, \label{eq:phia=b}\\
\psi=2(-\Delta(r) -1)s-2ar^2. \label{eq:psia=b}
\end{subnumcases}
Since $\phi>0$ and $\psi<0$, we get $a<0$ and $a>0$ respectively. It is a contradiction and hence the proof of (i) has been completed.
\newline

Second, we determine the condition that a Type II surface $X$ is timelike. By (\ref{eq:outerproduct}) and $\phi=\psi +4s$, the surface $X$ is timelike on $I \times \mathbb{R}$ if and only if 
\begin{equation} 
\phi <0\text{ or }\psi >0\text{ on }I \times \mathbb{R}. \label{eq:timelike}
\end{equation}

Let us consider the condition that $\phi<0$. If we assume that $a-b>0$, then for arbitrary $r>0$, there exist a large $s>0$ such that $\phi (r, s)>0$, which is a contradiction. Therefore we obtain $a\leq b$. When $a=b(\neq 0)$, the functions $\phi$ and $\Delta$ are written as follows:
\begin{align}
\phi (r,s)=2(-\Delta(r) +1)s-2ar^2, \nonumber\\
\Delta (r)=\sqrt{-2\delta r^2+1}. \nonumber
\end{align}
Therefore the condition $\phi <0$ on $I \times \mathbb{R}_{>0}$ if and only if 
\begin{equation}
-\Delta(r) +1\leq 0\text{ on }I\text{ and }a>0, \nonumber
\end{equation}
which is equivalent to 
\begin{equation}
\delta \leq 0\text{ and }a>0. \nonumber  
\end{equation}
The maximal interval $I$ is determined by the condition $\Delta>0$. Since $\delta \leq 0$, we obtain $I=\mathbb{R}_{>0}$.

When $a<b$, $\phi <0$ on $I \times \mathbb{R}_{>0}$ if and only if for every $r \in I$, the one of the following is true:\\
\begin{subnumcases}
{}
(-\Delta(r) +1)^2+(a^2-b^2)r^4<0\text{ or}  \label{eq:case1} \\
``(-\Delta(r) +1)^2+(a^2-b^2)r^4 \geq 0\text{ and} \nonumber \\
 s_0(r):=\cfrac{(\Delta -1)-\sqrt{(-\Delta (r) +1)^2+(a^2-b^2)r^4}}{(a-b)r^2} \leq 0." \label{eq:case2}
\end{subnumcases}
If (\ref{eq:case1}) holds for a $r \in I$, then $a^2-b^2 <0$ and we can take $c:=\sqrt{-a^2+b^2}>0$. Using the constant $c$, (\ref{eq:case1}) can be written as follows:\\
\begin{equation}
(-\Delta(r) +1)^2-c^2r^4 <0, \nonumber
\end{equation}
which is equivalent to
\begin{subnumcases}{}
-\Delta(r) +1+cr^2>0\text{ and} \label{eq:case1.1} \\
-\Delta(r) +1-cr^2 <0. \label{eq:case1.2}
\end{subnumcases}
Considering the condition $\Delta>0$, (\ref{eq:case1.1}) is equivalent to 
\begin{equation}
(1+cr^2)^2>\Delta (r)^2, \nonumber 
\end{equation}
that is, we obtain $c>-\delta$.\\
(\ref{eq:case1.2}) is equivalent to
\begin{subnumcases}{ }
 1-cr^2 \leq 0\text{ or} \\
 1-cr^2>0\text{ and }1-cr^2<\Delta (r). \label{eq:case1.2.b}
  \end{subnumcases}
 In the case of (\ref{eq:case1.2.b}), taking the square of both sides of the inequality $1-cr^2<\Delta (r)$, (\ref{eq:case1.2.b}) is equivalent to $c>\delta $ and $r^2<\cfrac{1}{c}$. Therefore (\ref{eq:case1}) is equivalent to 
 \begin{subnumcases}{ }
c>-\delta \text{ and }\cfrac{1}{c}\leq r^2,\text{ or} \label{eq:case1.1.1} \\
c>-\delta , c>\delta \text{, and }r^2<\cfrac{1}{c}. \label{eq:case1.1.2} 
\end{subnumcases}
Next we determine the maximal interval $I$, which is determined by the condition that $\Delta (r)>0$. In the case of (\ref{eq:case1.1.2}), the discriminant of $\Delta (r)^2=c^2r^4-2\delta +1$ is $\delta^2-c^2<0$, then we can take $I=(-1/\sqrt {c}, 1/\sqrt {c})$. 
In the case of (\ref{eq:case1.1.1}), if $c>\delta$ we can take $I=(-\infty , -1/\sqrt {c}]$, or $[1/\sqrt {c}, \infty )$ for the same reason as above. If $c\leq \delta$, then the discriminant of $\Delta (r)^2=c^2r^4-2\delta +1$ is $\delta^2-c^2\geq 0$, and by considering the following inequality:
\begin{equation}
\cfrac{\delta -\sqrt{\delta ^2-c^2}}{c^2}\leq \cfrac{1}{c}\leq \cfrac{\delta +\sqrt{\delta ^2-c^2}}{c^2}, \nonumber
\end{equation}
(\ref{eq:case1.1.1}) is equivalent to
\[
\begin{cases}
c>|\delta |\text{ and }r^2\geq \cfrac{1}{c}, or   \\
0<c\leq \delta \text{ and }\cfrac{\delta+\sqrt{\delta ^2-c^2}}{c^2}<r^2. 
\end{cases}
\]
Combining (\ref{eq:case1.1.2}), (\ref{eq:case1}) is equivalent to
\begin{subnumcases}{}
c>|\delta |\text{ or }\label{eq:1.2.1} \\
0<c\leq \delta \text{ and }\cfrac{\delta+\sqrt{\delta ^2-c^2}}{c^2}<r^2 \label{eq:1.2.2} . 
\end{subnumcases}
\\

Next we consider the condition (\ref{eq:case2}). If we assume that $a^2-b^2>0$, by the following inequality:
\begin{equation}
\sqrt{(-\Delta (r) +1)^2+(a^2-b^2)r^4}>|1-\Delta|, \nonumber
\end{equation}
then we obtain
\begin{equation}
s_0(r)>\cfrac{(\Delta -1)-|1-\Delta|}{(a-b)r^2}\geq 0, \nonumber
\end{equation}
which is a contradiction. Therefore we get $a^2-b^2 \leq 0$, and take $c:=\sqrt{-a^2+b^2}$.
Using the constant $c$, (\ref{eq:case2}) is equivalent to the following conditions:
\begin{subnumcases}{}
(-\Delta +1) ^2-c^2r^4 \geq 0\text{ and }\label{eq:2.1} \\
(\Delta -1)-\sqrt{(-\Delta +1)^2-c^2r^4}\geq 0. \label{eq:2.2} 
\end{subnumcases}

Since $(-\Delta +1) ^2-c^2r^4=(-\Delta +1+cr^2)(-\Delta +1-cr^2) $ and $c \geq 0$, (\ref{eq:2.1}) is equivalent to 
\[
\begin{cases}{}
 1+cr^2 \leq \Delta \text{ or}  \\
1-cr^2 \geq \Delta , 
\end{cases}
\]
that is, 
\begin{subnumcases}{}
 c\leq -\delta \text{ or}\label{eq:2.1.1} \\
c \leq \delta . \label{eq:2.1.2}
\end{subnumcases}

on the other hand, clearly, (\ref{eq:2.2}) is equivalent to $\Delta (r)\geq 1$, which is 
\begin{subnumcases}{}
c=0\text{ and }\delta \leq \ 0, \text{ or }\label{eq:2.2.1} \\
c\neq 0\text{ and }\ r^2 \geq \cfrac{2\delta}{c^2}. \label{eq:2.2.2} 
\end{subnumcases}
By (\ref{eq:2.1.1}), (\ref{eq:2.1.2}), (\ref{eq:2.2.1}) and (\ref{eq:2.2.2}), (\ref{eq:case2}) is equivalent to
\begin{subnumcases}{}
0\leq c \leq -\delta \text{ or }\label{eq:2.3.1}\\
0< c \leq \delta \text{ and }r^2 \geq \cfrac{2\delta}{c^2} \label{eq:2.3.2}.
\end{subnumcases}
Therefore, by (\ref{eq:1.2.1}), (\ref{eq:1.2.2}), (\ref{eq:2.3.1}) and (\ref{eq:2.3.2}),  (\ref{eq:case1}) and (\ref{eq:case2}) are equivalent to 
\[
\begin{cases}
 c>|\delta |\text{ or}     \\
0<c\leq \delta \text{ and }\cfrac{\delta+\sqrt{\delta ^2-c^2}}{c^2}<r^2 \text{, or}   \\
0\leq c \leq -\delta \text{ or}  \\
0< c \leq \delta \text{ and}\ r^2 \geq \cfrac{2\delta}{c^2} .
\end{cases}
\]
In the case that $0< c \leq \delta$, since 
\[
\cfrac{\delta+\sqrt{\delta ^2-c^2}}{c^2}<r^2 \leq \cfrac{\delta+\sqrt{\delta ^2}}{c^2}=\cfrac{2\delta}{c^2}
\], the above condition is equivalent to \\
\begin{subnumcases}{ }
c>|\delta |\text{ or} \nonumber\\
0\leq c \leq-\delta \text{ or} \nonumber\\
0<c\leq \delta \text{ and }\cfrac{\delta+\sqrt{\delta ^2-c^2}}{c^2}<r^2.\nonumber
\end{subnumcases}
The above inequalities are the equivalent condition that $\phi <0$ and $a<b$. 

 Next, let us consider the condition that $\psi>0$. If we assume that $a-b<0$, then for arbitrary $r>0$, there exist a large $s>0$ such that $\psi (r, s)>0$, which is a contradiction. Therefore we obtain $a\geq b$. When $a=b(\neq 0)$, since $\psi=2(-\Delta(r) -1)s-2ar^2 $, the condition $\psi>0$ does not hold.

 When $a>b$, $\psi >0$ on $I \times \mathbb{R}_{>0}$ if and only if for every $r \in I$, the one of the following is true:
\[
\begin{cases}
(\Delta(r)+1)^2+(a^2-b^2)r^4<0\text{ or} \\
``(\Delta(r)+1)^2+(a^2-b^2)r^4\geq 0\text{ and}\  
\cfrac{\Delta (r)+1+\sqrt{(\Delta(r) +1)^2+(a^2-b^2)r^4}}{(a-b)r^2}\leq 0."
\end{cases}
\]

Clearly,  the latter case does not occur, and we can take $c:=\sqrt{-a^2+b^2}>0$. Since $(\Delta (r) +1)^2-c^2r^4=(\Delta (r)+1+cr^2)(\Delta(r) +1-cr^2)$, the condition that $\psi>0$ on $I\times \mathbb{R}_{>0}$ is equivalent to $-a^2+b^2>0$, $0<c<\delta $, and $r^2>1/c$ under the condition that $\Delta (r)>0$.
The  maximal interval $I$ is determined by the conditions that  $\Delta (r)>0$ and $r^2>1/c$. Noting that  the discriminant of $c^2r^4-2\delta r^2+1$ is $\delta ^2-c^2>0$ and by the following inequalities 
\begin{equation*}
\cfrac{\delta -\sqrt{\delta ^2-c^2}}{c^2}<\cfrac{1}{c}<\cfrac{\delta +\sqrt{\delta ^2-c^2}}{c^2},
\end{equation*}
we can take $I$ which satisfies $r^2>(\delta+\sqrt{\delta ^2-c^2})/c^2$. The proof of (ii) has been completed.\\

Third, we determine the condition that a Type II surface $X$ has both spacelike part and lightlike part.  By (\ref{eq:outerproduct}), this condition is equivalent to
\[
\begin{cases}{}
\phi \psi \leq 0\text{, and} \\
\phi \text{ or }\psi \text{ have a zero point.}\nonumber
\end{cases}
\]
Let $r \in I$ be fixed arbitrarily. If there exists a $s>0$ such that $\phi (r, s)<0$, then $\phi \psi (r, s)>0$, which is a contradiction. Therefore we get $\phi \geq 0$, especially $a-b \geq 0$. Similarly, if there exists a $s>0$ such that $\psi (r, s)>0$, then $\phi \psi (r, s)>0$, which is a contradiction. Therefore we get $a-b \leq 0$, and then $a=b$. In this case, $\phi$ and $\psi$ are written as (\ref{eq:phia=b}) and (\ref{eq:psia=b}).

Since $\phi \geq 0$ and $\psi \leq 0$, we get $a<0$ and $a>0$, respectively. This is a contradiction, and hence, the proof of (iii) has been completed.\\

Fourth, we determine the condition that a Type II surface $X$ has both a timelike part and a lightlike part.  By (\ref{eq:outerproduct}) and the relation $\phi =\psi +4s$, the condition is equivalent to
\begin{subnumcases}{}
\phi \leq 0\text{ on }I\times \mathbb{R}_{>0}\text{ and }\phi \ \text{has a zero point, or}\label{eq:phi<} \\
\psi \geq 0\text{ on }I\times \mathbb{R}_{>0}\text{ and }\psi \text{ has a zero point.}\label{eq:psi>}
\end{subnumcases}

 Let us consider the condition (\ref{eq:phi<}).
Now, we get $a-b<0$ from the condition, and there exists a $r\in I$ such that
\begin{subnumcases}{}
(-\Delta (r)+1)^2+(a^2-b^2)r^4=0,\ \text{and} \label{eq:determinant} \\
s_1(r):=\cfrac{\Delta (r)-1}{(a-b)r^2}>0. \label{eq:s_1>0}
\end{subnumcases}
Since $a<b$, (\ref{eq:s_1>0}) is equivalent to $\Delta (r)-1<0$. Therefore by (\ref{eq:determinant}), we get $a^2-b^2<0$ and take $c:=\sqrt{-a^2+b^2}>0$. Using the constant $c$ and the condition  $\Delta (r)-1<0$, (\ref{eq:determinant}) is equivalent to $-\Delta (r)+1-cr^2=0$, that is, $c=\delta$, and in this case we get the $r$ satisfies the condition that $r^2<1/c$. Conversely, if we assume that 
$a-b<0$, $a^2-b^2<0$, $c=\delta $, and $r^2<1/c$, then we get $\Delta (r)=1-cr^2$, (\ref{eq:determinant}), and (\ref{eq:s_1>0}) for any $r \in I$, and hence, (\ref{eq:phi<}) also holds. In this case we can take $r_0=0$ in the equation (\ref{eq:type2}).

Next, we determine the lightlike part of the surface. By (\ref{eq:s_1>0}) and $\Delta(r)=1-cr^2$, the surface is lightlike on the domain
\begin{equation}
s_1(r)=s_1=\cfrac{c}{b-a}. \nonumber
\end{equation}
If we define $\theta_1:=\log s_1$, the lightlike part of the surface is
\begin{equation}
c(r):=X(r,\theta _1) =\left(
\displaystyle\int^r _0 \cfrac{1}{\Delta (s)}\,ds, \int^r_0 \cfrac{as^2}{\Delta (s)}\,ds, \int^r _0 \cfrac{bs^2}{\Delta (s)}\,ds \right) +\left( 0, r\sinh\theta _1, r\cosh\theta _1
 \right) ,\nonumber
\end{equation}
Since $\sinh\theta _1=\cfrac{a}{c}$ and $\cosh\theta _1 =\cfrac{b}{c}$, 
\begin{equation}
c(r) =
\cfrac{1}{c}\displaystyle\int^r _0 \cfrac{1}{\Delta (s)}\,ds\left( c, a, b \right ). \nonumber
\end{equation}
Therefore the lightlike part is a part of a straight line.

Next, let us consider the condition(\ref{eq:psi>}). Now, we get $a-b>0$ from the condition, and there is a $r \in I$ such that 
\begin{subnumcases}{}
(\Delta (r)+1)^2+(a^2-b^2)r^4=0\text{ and}    \label{eq:determinant_2} \\
s_2:=\cfrac{\Delta (r)+1}{(a-b)r^2}>0. \label{eq:s_2>0}
\end{subnumcases}
Since $a-b>0$, (\ref{eq:s_2>0}) always holds. By (\ref{eq:determinant_2}), we get $a^2-b^2<0$ and take $c:=\sqrt{-a^2+b^2}>0$, (\ref{eq:determinant_2}) is equivalent to $\Delta (r)+1-cr^2=0$. By squaring both sides, we get $c=\delta $ and $r^2>1/c$. Conversely, if we assume that 
$a-b>0$, $a^2-b^2<0$, $c=\delta $, and $r^2>1/c$, then we get $\Delta (r)=cr^2-1$, (\ref{eq:determinant_2}), and (\ref{eq:s_2>0}) for any $r \in I$, and hence, (\ref{eq:psi>}) also holds. \\
Next, we determine the lightlike part of the surface. By (\ref{eq:s_2>0}) and $\Delta(r)=cr^2-1$, the surface is lightlike on the domain
\begin{equation}
s_2(r)=s_2=\cfrac{c}{a-b}. \nonumber
\end{equation}
If we define $\theta_2:=\log s_2$, the lightlike part of the surface is
\begin{equation}
c(r):=X(r,\theta _2) =\left(
\displaystyle\int^r _{r_0} \cfrac{1}{\Delta (s)}\,ds, \int^r_{r_0} \cfrac{as^2}{\Delta (s)}\,ds, \int^r _{r_0} \cfrac{bs^2}{\Delta (s)}\,ds \right) +\left( 0, r\sinh\theta _2, r\cosh\theta _2
 \right) . \nonumber
\end{equation}
Since $\sinh\theta _2=-\cfrac{a}{c}$ and $\cosh\theta _2=-\cfrac{b}{c}$, 
\begin{equation}
c(r) =
\cfrac{1}{c}\displaystyle\int^r _{r_0} \cfrac{1}{\Delta (s)}\,ds\left( c, a, b \right ) -\cfrac{1}{c}\left( 0, a, b \right )r_0. \nonumber
\end{equation}
Therefore, the lightlike part is a part of a straight line, and hence, the proof of (iv) has been completed.\\

 Fifth, we determine the condition that a Type II surface $X$ has all causal characters. The surface $X$ is lightlike on $I\times \mathbb{R}$ if and only if $\phi$ and $\psi$ are zero functions. In this case, we get $a=b=0$, which is a contradiction. Therefore, the Type II surfaces except those stated above are ZMC surfaces which have all causal characters.
\newline

Now we prove (2). For the Type I surface $X$, by a straightforward computation, the determinant of the induced metric of the surface (\ref{eq:Singular_hyperbpla1}) is $-(1/c^3)\left( a\sinh\theta -b\cosh\theta \right)^2$. If we assume that $a\sinh\theta -b\cosh\theta =0$, then we get the equation $e^{2\theta}=(a+b)/(a-b)$, which contradicts the condition $-a^2+b^2>0$. Therefore, the surface is timelike. 
 
 For the Type II surface $X$, by a straightforward computation, the determinant of the induced metric of the surface (\ref{eq:Singular_hyperbpla2}) with respect to the local coordinate system $(u, \theta)$ is $-(1/c^3)\left( b\sinh\theta -a\cosh\theta \right)^2$. Therefore, if we define $\theta_0:=\frac{1}{2}\log{(a+b)/(-a+b)}$, the surface has timelike part and the following lightlike line:
\begin{equation}
 c(u):=X(u, \theta_0)=\frac{u}{c} \left( c, a, b \right )+\cfrac{1}{\sqrt{c}}\left( 0, \sinh \theta_0, \cosh \theta_0 \right) .\nonumber \\
\end{equation}
\end{proof}

At the end of this subsection, it should be noted that there is a relationship between singular type surfaces and the asymptotic behaviors of some general type surfaces. If we consider a Type II ZMC surface of Riemann type which belongs to the class (iv) in Theorem \ref{thm:causaloftimelike}, this surface has the following implicit form up to a translation:
\begin{equation*}
\left(t-\cfrac{b}{c}x+\cfrac{b}{c}r(x)\right)^2-\left(y-\cfrac{a}{c}x+\cfrac{a}{c}r(x)\right)^2=r(x)^2, r(t)=\cfrac{1}{\sqrt{c}}\tanh{(\sqrt{c}x)}.
\end{equation*}
By taking limits $r\to \pm 1/\sqrt{c}$ on the surface (which correspond to $x\to \pm \infty$), we obtain 
\begin{equation*}
\left(t-\cfrac{b}{c}x\pm\cfrac{b}{c\sqrt{c}}\right)^2-\left(y-\cfrac{a}{c}x\pm\cfrac{a}{c\sqrt{c}}\right)^2=\cfrac{1}{c}.
\end{equation*}
These surfaces are nothing but the singular type surfaces (\ref{eq:Singular_hyperbpla2}) up to a translation in $\mathbb{L}^3$ and Figure 7 and 8 in Section 5.3 are pictures of the above two surfaces.

 \subsection{Causal characters of ZMC surfaces of Riemann type foliated by parabolas\label{parabola}}ZMC surfaces of Riemann type which are foliated by circles in lightlike parallel planes are written as in Theorem \ref{thm:lightlike}.
\begin{theorem}\label{thm:causaloflightlike}
Let $J$ be the maximal interval of the functions $r$, $f$, and $g$ in Theorem \ref{thm:lightlike}. Causal characters of ZMC surfaces of Riemann type which are foliated by circles in lightlike parallel planes and satisfy one of the four conditions in Theorem \ref{thm:lightlike} are determined as follows.
\begin{itemize}
\item[(1)] General type: Causal characters of general type surfaces are determined as follows.
\begin{itemize}
\item[(i)] The surface $X$ is spacelike on $J\times \mathbb{R}$ if and only if $a>0$ and $p<0$. \item[(ii)] The surface $X$ is timelike on $J\times \mathbb{R}$ if and only if $a<0$ and $p<0$.\item[(iii)] The surface $X$ has both a spacelike part and a lightlike part and it does not have a timelike part if and only if $a>0$ and $p=0$. Moreover in this case, the lightlike part is a part of a straight line.
\item[(iv)] The surface $X$ has both a timelike part and a lightlike part and it does not have a spacelike part if and only if $a<0$ and $p=0$. Moreover in this case, the lightlike part is a part of a straight line. 
\item[(v)] The surfaces except those stated above are ZMC surfaces which change their causal characters.
\end{itemize}
\item[(2)] Singular type: Causal characters of singular type surfaces are determined as follows.
\begin{itemize}
\item[(i)] The surface $X$ is timelike if and only if $p>0$.
\item[(ii)] The surface $X$ has both a timelike part and a lightlike part and it does not have a spacelike part if and only if $p=0$. Moreover in this case, the lightlike part is a straight line.
\item[(iii)] The surface $X$ has all causal characters if and only if $p<0$. 
\end{itemize}
\end{itemize}
\end{theorem}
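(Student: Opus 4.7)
The plan is to compute the determinant $EG-F^{2}$ of the first fundamental form of (\ref{eq:4}) and read off its sign from the explicit formulas in Theorem \ref{thm:lightlike}. A direct calculation of the partial derivatives of (\ref{eq:4}) gives $G=1$, $F=f'+2rv$, and $E=(f')^{2}+4g'+2r'v^{2}$, hence
\[
EG-F^{2}=2(r'-2r^{2})v^{2}-4f'rv+4g'.
\]
Substituting $r'-2r^{2}=a$ and $f'r=b/r$ from the ODEs (\ref{eq:r}) and (\ref{eq:f}) reduces this to the quadratic in $v$
\[
\Phi(u,v):=EG-F^{2}=2av^{2}-\frac{4b}{r(u)}v+4g'(u),
\]
and all five parts of (1) and the three parts of (2) will be extracted from the sign of $\Phi$. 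In the general type subcase (i) of Theorem \ref{thm:lightlike} ($a=0$), $\Phi$ is affine in $v$ with slope $-4b/r$, which is nonzero by the non-rotationality condition $b\neq 0$, so $\Phi$ takes both signs on every slice $\{u\}\times\mathbb{R}$; this will place the surface into item (v).

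In the subcases $a\neq 0$, I would complete the square to obtain
\[
\Phi(u,v)=2a\Bigl(v-\tfrac{b}{a\,r(u)}\Bigr)^{2}+4\Bigl(g'(u)-\tfrac{b^{2}}{2a\,r(u)^{2}}\Bigr).
\]
The crucial step is to show, by differentiating the closed form of $g$ in Theorem \ref{thm:lightlike}(1)(ii)--(iii) and using the identities $\csc^{2}-1=\cot^{2}$, $1+\cos(2\,\cdot\,)=2\cos^{2}$, and their hyperbolic analogues, that after telescoping
\[
g'(u)-\frac{b^{2}}{2a\,r(u)^{2}}=\varepsilon\,p\,h(u),
\]
where $\varepsilon\in\{+1,-1\}$ is a fixed sign falling out of the parametrization and $h(u)=\cos^{2}\{\sqrt{2a}(u+c)\}$ in case (ii) or $h(u)=\cosh^{2}\{\sqrt{-2a}(-u+c)\}$ in case (iii). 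Since $h>0$ on the interval $J$, the sign of the constant term in $\Phi$ is controlled by $\mathrm{sgn}(p)$; together with the sign of $a$ controlling the opening of the quadratic, the pairs $(\mathrm{sgn}(a),\mathrm{sgn}(p))$ will partition into the five statements (i)--(v). Moreover, when the constant term vanishes ($p=0$) the zero set of $\Phi$ is exactly the level curve $\{v=b/(ar(u))\}$; substituting this into (\ref{eq:4}), I expect the unbounded $\cot$ or $\coth$ contributions of $f$, $g$, and $b/(ar)$ to cancel pairwise, making $x$, $y+t$, and $y-t$ all affine in $u$, and a short Lorentzian inner product will confirm the tangent vector is lightlike.

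For the singular type part (2) the formulas in Theorem \ref{thm:lightlike}(2) are elementary. Setting $k:=\sqrt{-2a}>0$ and substituting into $\Phi$ should produce
\[
\Phi(u,v)=-k^{2}\Bigl(v+\tfrac{4b}{k^{3}}\Bigr)^{2}-8kp\,e^{-2ku}.
\]
Since the first term is nonpositive and $e^{-2ku}$ is everywhere positive, splitting on the sign of $p$ immediately yields the three sub-items of (2); in the borderline case $p=0$ the lightlike locus is the straight line $\{v=-4b/k^{3}\}$, which I will convert into a straight line in $\mathbb{L}^{3}$ by direct substitution.

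The main obstacle I anticipate is the sign and algebraic bookkeeping needed to establish the reduction $g'-b^{2}/(2ar^{2})=\varepsilon\,p\,h$ in the two non-elementary general-type cases, and to verify the pairwise cancellation of the unbounded terms along the lightlike level curve $v=b/(ar)$. Once those two identities are in place, the remainder of the argument reduces to elementary sign analysis of a one-variable quadratic polynomial.
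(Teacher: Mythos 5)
Your proposal is correct and follows essentially the same route as the paper: reduce $EG-F^2$ to the quadratic $2av^2-\tfrac{4b}{r}v+4g'$ in $v$ via the ODEs (\ref{eq:r})--(\ref{eq:f}), control its sign through the identity $\tfrac{b^2}{r^2}-2ag'=-2ap\,h(u)$ with $h=\cos^2$ or $\cosh^2$ (the paper phrases this as a discriminant computation where you complete the square, an immaterial difference), and exhibit the lightlike locus $v=b/(ar(u))$ as an affine curve; your closed form for the singular type, $-k^2(v+4b/k^3)^2-8kp\,e^{-2ku}$ with $k=\sqrt{-2a}$, checks out and reproduces the paper's lightlike line $v_0=(b/a)\sqrt{2/(-a)}$. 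One remark: when you pin down the sign $\varepsilon$ you will find $\varepsilon=+1$, so for $a>0$ the spacelike case corresponds to $p>0$; this agrees with the computation in the paper's own proof but not with the ``$p<0$'' printed in item (1)(i) of the theorem statement, a discrepancy internal to the paper rather than a flaw in your plan.
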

\begin{proof}
First we prove (1). By (\ref{eq:4}), the determinant of the induced metric with respect to the local coordinate system $(u, v)$ is $EG-F^2=4g'(u)-4r(u)f'(u)v+(2r'(u)-4r(u)^2)v^2$. By substituting the equations (\ref{eq:r}) and (\ref{eq:f}), we get
\begin{equation}
EG-F^2=2\{ 2g'(u)-2\cfrac{b}{r(u)}v+av^2 \} . \label{eq:causal} \\
\end{equation}
Then causal characters are determined as follows.

First, we determine the condition that a surface X is spacelike. If we assume $a=b=0$, then it contradicts Theorem \ref{thm:lightlike}. Therefore by considering the discriminant of the right-hand side of (\ref{eq:causal}) for every $u \in J$, the spacelike condition on $J \times \mathbb{R}$ is $a>0$ and for arbitrary $u\in J$, $(b^2/r(u)^2)-2ag'(u)<0$.
When $a>0$, by Theorem \ref{thm:lightlike} we get $(b^2/r(u)^2)-2ag'(u)=-2ap\cos ^2 \{ \sqrt{2a}(u+c) \}$. Therefore $X$ is spacelike if and only if $a>0$ and $p>0$. The maximal interval $J$ is determined by the condition that $r$, $f$, and $g$ are well-defined, that is, $\cos \{ \sqrt{2a}(u+c)\} \neq 0$, and $\sin \{ \sqrt{2a}(u+c)\}\neq 0$. The proof of (i) has been completed.

 Second, we determine the condition that a surface X is timelike. The timelike condition can be determined the same as the case (i). By (\ref{eq:causal}), the surface is timelike on $J \times \mathbb{R}$ if and only if $a<0$ and $(b^2/r(u)^2)-2ag'(u)<0$ on $J$. When $a<0$, by Theorem \ref{thm:lightlike}, we get $(b^2/r(u)^2)-2ag'(u)=-2ap\cosh ^2 \{ \sqrt{-2a}(-u+c) \}$. Therefore $X$ is timelike if and only if $a<0$ and $p<0$. The  maximal interval $J$ is determined by the condition that $r$, $f$, and $g$ are well-defined, that is, $J\subset \mathbb{R} \setminus\{c \}$. The proof of (ii) has been completed.

Next we will check the condition that a surface has two causal characters. Note that If $a=0$, the causal character of the surface changes from spacelike to timelike along the curve $v=(r(u)/b)g'(u)$. Third, we consider the condition that the surface has both spacelike part and lightlike part. Since the surface does not have the timelike part, we get $a>0$. Considering the discriminant of the right-hand side of (\ref{eq:causal}) for every $u \in J$, the condition is equivalent to $a>0$ and for arbitrary $u\in J$, $(b^2/r(u)^2)-2ag'(u)\leq 0$ and there exists a $u\in J$ such that $(b^2/r(u)^2)-2ag'(u)= 0$.

Moreover, since $(b^2/r(u)^2)-2ag'(u)= -2ap\cos ^2 \{ \sqrt{2a}(u+c) \}$, the above condition is equivalent to $a>0$ and $p=0$. Next we determine the shape of the lightlike part. By (\ref{eq:causal}),  the surface is lightlike on the domain 
\begin{equation}\label{eq:v}
v(u)=\cfrac{b}{ar(u)}=\cfrac{\sqrt{2}b}{a\sqrt{a}}\cot \{ \sqrt{2a} (u+c) \},\quad u \in J, 
\end{equation}
and the lightlike part is 
\begin{equation}
 c(u)=X(u,v(u)) =\left( f(u)+v(u), g(u)+u+\cfrac{r(u)v(u)^2}{2}, g(u)-u+\cfrac{r(u)v(u)^2}{2}\right ). \nonumber
\end{equation}
Since $a>0$ and $p=0$, we get 
\begin{eqnarray}
f(u)&=&-\cfrac{\sqrt{2}b}{a^{\frac{3}{2}}}\cot\{ \sqrt{2a} (u+c) \}-\cfrac{2b}{a}(u+c) \nonumber \\ 
&=&-v(u)-\cfrac{2b}{a}(u+c). \nonumber
\end{eqnarray}
By (\ref{eq:v}), $r(u)v(u)^2/2=(b/2a)v(u)$ and then 
\begin{equation}
g(u)+\cfrac{r(u)v(u)^2}{2}=-\cfrac{b^2}{a^2}(u+c).\nonumber
\end{equation}
Therefore the lightlike part is the following line:
\begin{equation}
 c(u)=u \left( -\cfrac{2b}{a}, 1-\cfrac{b^2}{a^2}, -1-\cfrac{b^2}{a^2} \right )+\left( -\cfrac{2bc}{a}, -\cfrac{b^2c}{a^2}, -\cfrac{b^2c}{a^2} \right).\nonumber
\end{equation}
The proof of (iii) has been completed.

Fourth, we consider the condition that a surface has both a timelike part and a lightlike part. Since the surface does not have the spacelike part, we get $a<0$. Considering the discriminant of the right-hand side of (\ref{eq:causal}) for every $u \in J$, the condition is equivalent to $a<0$ and for arbitrary $u\in J$, $b^2/r(u)^2-2ag'(u)\leq 0$ and there exists a $u\in J$ such that $b^2/r(u)^2-2ag'(u)= 0$.
Moreover, since $b^2/r(u)^2-2ag'(u)= -2ap\cosh ^2 \{ \sqrt{-2a}(-u+c) \}$, the above condition is equivalent to $a<0$ and $p=0$.\\
Next we determine the shape of the lightlike part. By (\ref{eq:causal}),  the surface is lightlike on the domain 
\begin{equation}\label{eq:v'}
v(u)=\cfrac{b}{ar(u)}=\cfrac{\sqrt{2}b}{a\sqrt{-a}}\coth \{ \sqrt{-2a} (-u+c) \} , u \in J, 
\end{equation}
and the lightlike part is 
\begin{equation}
 c(u)=X(u,v(u)) =\left( f(u)+v(u), g(u)+u+\cfrac{r(u)v(u)^2}{2}, g(u)-u+\cfrac{r(u)v(u)^2}{2}\right ). \nonumber
\end{equation}
Since $a<0$ and $p=0$, we get 
\begin{eqnarray}
f(u)&=&-\cfrac{\sqrt{2}b}{a\sqrt{-a}}\coth\{ \sqrt{-2a} (-u+c) \}-\cfrac{2b}{a}(u+c)  \nonumber \\ 
&=&-v(u)-\cfrac{2b}{a}(u+c). \nonumber
\end{eqnarray}
By (\ref{eq:v}), $r(u)v(u)^2/2=(b/2a)v(u)$ and then 
\begin{equation}
g(u)+\cfrac{r(u)v(u)^2}{2}=\cfrac{b^2}{a^2}(-u+c).\nonumber
\end{equation}
Therefore the lightlike part is the following line:
\begin{equation}
 c(u)=u \left( -\cfrac{2b}{a}, 1-\cfrac{b^2}{a^2}, -1-\cfrac{b^2}{a^2} \right )+\left( -\cfrac{2bc}{a}, -\cfrac{b^2c}{a^2}, -\cfrac{b^2c}{a^2} \right).\nonumber
\end{equation}
The proof of (iv) has been completed.

  Fifth, we determine the condition that a surface $X$ has all causal characters. Since $(a, b)\neq(0, 0)$, $X$ is not to be lightlike. Therefore the surface which does not satisfy the condition from (i) to (iv) is a surface which changes its causal characters. The proof of (v) and so the proof of (1) have been completed.
\newline

Now we prove (2). The discriminant of the right-hand side of (\ref{eq:causal}) is determined by $b^2/r(u)^2-2ag'(u)=4a\sqrt{-2a}pe^{-2\sqrt{-2a}u}$. Therefore causal characters of the surface $X$ is determined by only $p$.

When $p>0$, the discriminant of the right-hand side of (\ref{eq:causal}) is negative. Therefore the surface $X$ is timelike. The proof of (i) has been completed. 

When $p=0$, the discriminant of the right-hand side of (\ref{eq:causal}) is identically zero. Therefore the surface $X$ has a timelike part and a lightlike part. Since 
\begin{equation*}
EG-F^2=-4\left( \cfrac{b}{a}-\sqrt{\cfrac{-a}{2}}v\right)^2, 
\end{equation*}
the surface $X$ is lightlike on $v=v_0:=(b/a)\sqrt{2/-a}$, therefore the lightlike part of the surface is the following straight line:
\begin{equation*}
c(u)=X(u, v_0)=\left( -\cfrac{2b}{a}+v_0, -\frac{b^2}{a^2}u+u+\frac{1}{2}\sqrt{\frac{-a}{2}}v_0^2, -\frac{b^2}{a^2}u-u+\frac{1}{2}\sqrt{\frac{-a}{2}}v_0^2\right).
\end{equation*}
The proof of (ii) has been completed. 

 When $p<0$, the discriminant of the right-hand side of (\ref{eq:causal}) is positive. Therefore the surface $X$ has all causal characters. The proof of (iii) and so the proof of (2) have been completed. 
\end{proof}
Similarly, as Section 4.1 and 4.2, it should be noted that there is a relationship between singular type surfaces and asymptotic behaviors of some general type surfaces. If we consider a general type surface which belongs to the class (1) (iv) in Theorem \ref{thm:causaloflightlike}, this surface has the parametrization (\ref{eq:4}) with the following functions:
\begin{center}
$r(u)=\sqrt{\cfrac{-a}{2}}\tanh\{ \sqrt{-2a} (-u+c) \}$, \\
$f(u)=-\cfrac{b}{ar(u)}-\cfrac{2b}{a}(u+c)$, \\
$g(u)=-2\cfrac{b^2}{a^2r(u)}+\cfrac{b^2}{a^2}(-u+c)$. \\
\end{center}
By taking a limit $r\to \sqrt{-a/2}$, we obtain the functions $r$, $f$, and $g$ of the singular type surface which belongs to the class (2) (ii) in Theorem \ref{thm:causaloflightlike}. Therefore, the corresponding surface is a singular type surface which has timelike part and a lightlike line. Figure 10 and 11 in Section 5.4 are pictures of the above two surfaces.\\

By Theorems \ref{thm:causalofspacelike}, \ref{thm:causaloftimelike}, and \ref{thm:causaloflightlike}, we classified ZMC surfaces of Riemann type according to their causal characters. In particular, we obtain the following corollary for ZMC surfaces of Riemann type which have exactly two causal characters, that is, only spacelike and lightlike parts, or only timelike and lightlike parts.
\begin{corollary}\label{thm:2-causal}
If a ZMC surface of Riemann type has exactly two causal characters, then the lightlike part is a part of a straight line.
\end{corollary}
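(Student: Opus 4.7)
The plan is to derive Corollary \ref{thm:2-causal} as an immediate consequence of the three classification theorems \ref{thm:causalofspacelike}, \ref{thm:causaloftimelike}, and \ref{thm:causaloflightlike}, since every ZMC surface of Riemann type is (up to ambient isometry) foliated either by Euclidean circles, by hyperbolas, or by parabolas, and these three theorems exhaust all cases within each foliation type. Thus I would proceed by an exhaustive case-by-case inspection, picking out from each theorem precisely those items in which the surface realizes exactly two causal characters, and checking that the lightlike part is always (a subset of) a straight line. The virtue of this approach is that each of the three classification theorems has already done the geometric work; the corollary is essentially a bookkeeping statement about what was proved.

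More precisely, I would organize the cases as follows. For surfaces foliated by Euclidean circles, Theorem \ref{thm:causalofspacelike} yields exactly two causal characters in the general type (1)(iii) ($b=2a$, spacelike $+$ lightlike) and in the general type (1)(iv) ($b=-2a$, $0<r^2<1/a$, timelike $+$ lightlike), together with the singular type (2) (timelike $+$ two lightlike lines); in each item the theorem already asserts that the lightlike locus is a part of a straight line. For hyperbolas, Theorem \ref{thm:causaloftimelike} tells us that Type I general surfaces are everywhere timelike and hence carry only one causal character; Type II general case (iii) is empty, Type II general case (iv) is timelike $+$ lightlike with the lightlike part straight, and the Type II singular type is timelike with a lightlike line. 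For parabolas, Theorem \ref{thm:causaloflightlike} produces items (1)(iii), (1)(iv), and (2)(ii) as the exactly-two-causal-characters cases, and in each the lightlike part is explicitly a straight line.

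The key step — after collecting these items — is to verify that \emph{no other} configuration in the three theorems yields exactly two causal characters, since otherwise the enumeration would be incomplete. For this I would argue that the remaining items in each theorem fall into one of three harmless buckets: surfaces with a single causal character throughout (spacelike, timelike, or lightlike on the whole domain), surfaces with all three causal characters (item (v) of Theorem \ref{thm:causalofspacelike}, item (v) of Theorem \ref{thm:causaloftimelike}, and items (1)(v), (2)(iii) of Theorem \ref{thm:causaloflightlike}), or surfaces explicitly shown not to exist in the relevant case. Combining the two lists yields the corollary.

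The only substantive obstacle is the completeness check in the previous paragraph: one must ensure that every remaining item really belongs to one of the three harmless buckets, so that the enumeration of two-causal-character cases is exhaustive. Since the three classification theorems are formulated as ``if and only if'' statements covering all possibilities, this reduces to a quick inspection of their statements; no additional computation is required.
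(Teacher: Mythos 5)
Your proposal is correct and is essentially the paper's own argument: the paper states the corollary immediately after Theorems \ref{thm:causalofspacelike}, \ref{thm:causaloftimelike}, and \ref{thm:causaloflightlike} with no further proof, treating it exactly as the case-by-case bookkeeping you describe. Your enumeration of the two-causal-character cases (and of the remaining ``harmless'' cases) agrees with the statements of the three theorems, so nothing is missing.
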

The following figures are Type II ZMC surfaces of Riemann type (\ref{eq:type2}) whose parameter $\delta$ varies from $\sqrt{2}$ to $1/2$. In the second figure, the surface has timelike part and a lightlike line.
\begin{figure}[htbp]
\begin{center}
\hspace{-1.5cm} 
\begin{tabular}{c}
\begin{minipage}{0.3\hsize}
\begin{center}
\hspace{-0.6cm} 
\includegraphics[clip,scale=0.29,bb=0 0 440 350]{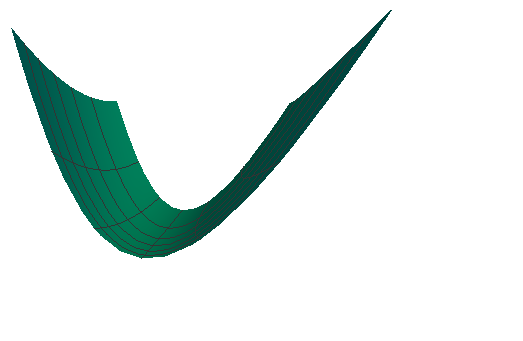}
 \hspace{1.6cm} $\delta =\sqrt{2}$: Timelike
\end{center}
\end{minipage}

\begin{minipage}{0.3\hsize}
\begin{center}
 \vspace{-3cm} 
\includegraphics[clip,scale=0.315,bb=0 0 540 530]{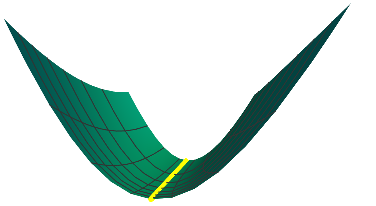}%
 \vspace{1cm} 
 $\delta =1$: Lightlike and timelike
\end{center}
\end{minipage}

\begin{minipage}{0.3\hsize}
\begin{center}
\vspace{-3cm}
\includegraphics[clip,scale=0.3,bb=0 0 540 530]{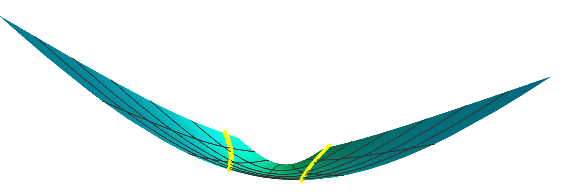}%
 \vspace{1cm} 
 $\delta =\frac{1}{2}$: All causal characters
\end{center}
\end{minipage}
\end{tabular}
\caption{Type II ZMC surfaces of Riemann type (\ref{eq:type2}) whose parameter $\delta$ vary from $\sqrt{2}$ to $1/2$.}
\end{center}
\end{figure}

\begin{remark}
It is not necessary that a ZMC surface of Riemann type which has a lightlike line (or a part of a straight line) has exactly two causal characters. In fact, if we consider the following special case of the case (v) in Theorem \ref{thm:causaloftimelike}: $b=-2|a|$ and $r^2>1/c$, each surface in this class has all causal characters and a lightlike half-line. Moreover, the lightlike part of the surface in this class consists of a lightlike half-line and two non-degenerate null curves, and the surface is timelike along the lightlike half-line and changes its causal characters along non-degenerate null curves. We can extend this surface along the straight line which is regarded as a circle with infinite radius, and the extended surface has a lightlike line. Figure 2 shows a surface in this class.
\end{remark}
\begin{figure}[htbp]
\begin{center}
\includegraphics[clip,scale=0.40,bb=0 0 410 350]{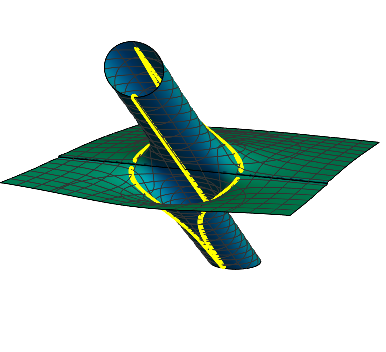}%
 \vspace{-1cm}
\caption{A surface which has all causal characters and whose lightlike part consists of a straight line and two non-degenerate null curves.}
\end{center}
\end{figure}


\section{ZMC surfaces containing a lightlike line\label{withlightlikeline}}
\subsection{Characteristic of ZMC surfaces containing a lightlike line}In this section, we calculate characteristics which were introduced in \cite{FujimoriETAL1} and determine the types of ZMC surfaces of Riemann type containing a lightlike line which appeared in the previous section. First, we review the definition of the characteristic of a ZMC surface containing a lightlike line, and give a useful lemma to calculate characteristics. 
 
Suppose that a ZMC surface with parameters $(u, v)$:
\begin{center}
$X(u, v)=(x(u, v), y(u, v), t(u, v))$,
\end{center}
has a lightlike part $L$ contained in  $\{ (0, t, t) \mid t \in \mathbb{R} \}$. Near the lightlike line segment $L$, the surface can be written as a graph on the $xy$-plane:
\begin{center}
$t=t(x, y)=t(x(u, v), y(u,v))$.
\end{center}
By the condition that mean curvature is identically zero, if we set $\alpha (y)=t_{xx}(0, y)$, then $\alpha $ satisfies the following ODE:
\begin{equation}
\frac{d\alpha}{dy}(y)+\alpha ^2(y)+\mu =0, \label{eq:b_2}
\end{equation}
where $\mu $ is a real constant called the {\it characteristic} of the ZMC surface along $L$. The notion of the characteristic of a ZMC surface was introduced in \cite{FujimoriETAL1}. There several examples of ZMC surfaces containing a lightlike line were constructed, and their  $\alpha$ and $\mu$ were determined (see Table 2). By a homothetic change, the characteristic $\mu$ can be $-1$, $0$, or $1$, and there is a relationship between the sign of the characteristic of a surface and its causal characters as proved in \cite{FujimoriETAL1}.
\begin{proposition}[{\cite[Proposition 1]{FujimoriETAL1}}]
If $\mu>0$, the surface is spacelike on both sides of $L$, and if $\mu<0$, the surface is timelike on both sides of $L$.
\end{proposition}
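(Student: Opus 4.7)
The plan is to read off the causal character of the surface near $L$ from a Taylor expansion in $x$ of the discriminant $D(x,y) := 1 - t_x(x,y)^2 - t_y(x,y)^2$, whose sign determines whether the graph $t = t(x,y)$ is spacelike ($D>0$), timelike ($D<0$), or lightlike ($D=0$) at $(x,y)$. The whole proof then reduces to computing the leading term of $D$ in $x$ and invoking the ODE (\ref{eq:b_2}) satisfied by $\alpha$.

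First I would extract the normalization conditions on $L$. Since $L = \{(0,y,y) \mid y \in \mathbb{R}\}$ lies on the graph, $t(0,y) = y$ for all $y$, so $t_y(0,y) \equiv 1$. The lightlike condition $D \equiv 0$ along $L$ then forces $t_x(0,y)^2 = 0$, hence $t_x(0,y) \equiv 0$. Differentiating this identity in $y$ gives $t_{xy}(0,y) \equiv 0$, and differentiating the definition $\alpha(y) = t_{xx}(0,y)$ in $y$ gives $t_{xxy}(0,y) = \alpha'(y)$.

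Next I would Taylor expand $t_x$ and $t_y$ in $x$ around $x=0$ at fixed $y$, using these boundary data:
\[
t_x(x,y) = x\,\alpha(y) + O(x^{2}), \qquad t_y(x,y) = 1 + \tfrac{x^{2}}{2}\alpha'(y) + O(x^{3}).
\]
Squaring and subtracting from $1$ yields
\[
D(x,y) = -x^{2}\bigl(\alpha(y)^{2} + \alpha'(y)\bigr) + O(x^{3}).
\]
Substituting the characteristic ODE $\alpha' + \alpha^{2} = -\mu$ from (\ref{eq:b_2}) collapses the bracket to $-\mu$, so
\[
D(x,y) = \mu\,x^{2} + O(x^{3}).
\]
For $x \neq 0$ small, the sign of $D(x,y)$ therefore coincides with the sign of $\mu$, independently of whether $x>0$ or $x<0$. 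Thus $\mu>0$ gives $D>0$ (spacelike) and $\mu<0$ gives $D<0$ (timelike) on both sides of $L$, as claimed.

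The computation is essentially forced once the normalizations on $L$ are in hand, so there is no serious obstacle. The only point that needs a word of justification is that the surface really is a smooth (or $C^{3}$) graph $t = t(x,y)$ on both sides of $L$; this follows from the assumed graph representation near $L$ stated just before the proposition together with smoothness of the extension. One might also worry about the $\mu = 0$ case, but the proposition is silent on it and indeed the leading-order analysis above is inconclusive there.
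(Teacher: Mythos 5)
Your argument is correct. Note that the paper itself offers no proof of this proposition: it is imported verbatim as Proposition 1 of \cite{FujimoriETAL1}, so there is no in-paper argument to compare against. Your Taylor-expansion computation is essentially the standard one behind that cited result: the normalizations $t_y(0,y)=1$, $t_x(0,y)=0$, $t_{xy}(0,y)=0$ (the second of which correctly uses that $L$ is a \emph{lightlike} part of the surface, not merely a curve lying on it, since along a graph over $L$ one only knows a priori that $1-t_x^2-t_y^2=-t_x^2\leq 0$) give
\[
1-t_x(x,y)^2-t_y(x,y)^2=-\bigl(\alpha(y)^2+\alpha'(y)\bigr)x^2+O(x^3)=\mu\,x^2+O(x^3),
\]
and the sign criterion for the graph discriminant stated in Section 2 then yields the claim for $0<|x|$ small, uniformly for $y$ in a compact piece of $L$. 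Your caveats about $C^3$ regularity (automatic here, since the surfaces are real analytic) and about the inconclusiveness of the leading term when $\mu=0$ are both apt.
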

If $\mu=0$, we can not determine causal characters across the lightlike line; however, surfaces which change their causal characters from spacelike to timelike across a lightlike line have always characteristic $\mu=0$.

Table 1 shows the explicit solutions $\alpha (y)$ of (\ref{eq:b_2}), and  Table 2 shows examples of ZMC surfaces containing a lightlike line. See the references \cite{FujimoriETAL1,FujimoriETAL2,FujimoriETAL3} for details.\\

The following lemma is useful to calculate the characteristics of ZMC surfaces.
\begin{lemma}\label{lemma:characteristic}
In the above situation, it holds that on the lightlike line
 \begin{equation*}
 t_{uu}(0,y)=\alpha (y)x_u(0,y)^2+y_{uu}(0,y).
 \end{equation*}
\end{lemma}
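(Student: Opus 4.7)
The plan is to apply the chain rule to the identity $t(u,v)=T(x(u,v),y(u,v))$, where $T$ stands for the graph function (the paper slightly abuses notation by writing $t$ for both the $t$-coordinate and its expression as a function of $(x,y)$). Once the chain rule is written out, the lemma follows from four simple facts about the derivatives of $T$ evaluated on the projection of $L$ onto the $xy$-plane, together with the definition $\alpha(y)=T_{xx}(0,y)$.

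First I would collect the boundary values of $T$ and its first derivatives on $L$. Since $L$ is contained in $\{(0,t,t)\mid t\in\mathbb{R}\}$, its projection to the $xy$-plane is the line $x=0$ with $y$ equal to the $t$-coordinate; hence $T(0,y)=y$ identically in $y$, which yields $T_y(0,y)=1$ and $T_{yy}(0,y)=0$. Next, $L$ is lightlike, so the gradient $(-T_x,-T_y,1)$ of $t-T(x,y)$ is a lightlike vector in $\mathbb{L}^3$ along $L$, giving $T_x^2+T_y^2-1=0$ on $L$. Combined with $T_y(0,y)=1$ this forces $T_x(0,y)=0$, and differentiating this identity in $y$ gives $T_{xy}(0,y)=0$.

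Then I apply the chain rule twice to $t(u,v)=T(x(u,v),y(u,v))$ to obtain
\begin{equation*}
t_{uu}=T_{xx}x_u^2+2T_{xy}x_uy_u+T_{yy}y_u^2+T_x x_{uu}+T_y y_{uu}.
\end{equation*}
Evaluating on $L$ and substituting $T_x(0,y)=T_{xy}(0,y)=T_{yy}(0,y)=0$, $T_y(0,y)=1$, and the definition $\alpha(y)=T_{xx}(0,y)$ collapses the right-hand side to $\alpha(y)x_u(0,y)^2+y_{uu}(0,y)$, which is precisely the identity claimed.

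No step should be a serious obstacle; the only thing to be careful about is the notational convention (distinguishing the $t$-coordinate from the graph function $T$) and making sure the evaluation "on the lightlike line" is interpreted consistently, i.e.\ at parameter values $(u,v)$ whose image lies in $L$. The lightlike condition $T_x(0,y)=0$ is the key geometric input; everything else is the chain rule.
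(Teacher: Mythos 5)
Your proof is correct and follows essentially the same route as the paper: the second-order chain rule applied to $t=T(x(u,v),y(u,v))$, followed by substitution of the boundary values $T_x(0,y)=T_{xy}(0,y)=T_{yy}(0,y)=0$ and $T_y(0,y)=1$. Your justification of $T_x(0,y)=0$ via the lightlike degeneracy condition $1-T_x^2-T_y^2=0$ is in fact slightly more careful than the paper, which attributes all four boundary values to the containment $L\subset\{(0,t,t)\mid t\in\mathbb{R}\}$ alone.
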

\begin{proof}
By the chain rule,
\begin{equation}
t_{uu}=t_{xx}x_u^2+2t_{xy}x_uy_u+t_xx_{uu}+t_{yy}y_u^2+t_yy_{uu}. \nonumber
\end{equation}
By the assumption that the lightlike line $L$ is in $\{ (0, t, t) \mid t \in \mathbb{R} \}$, we get $t_x(0,y)=0$, $t_y(0,y)=1$, $t_{xy}(0,y)=0$, and $t_{yy}(0,y)=0$. 
Substitute these partial derivatives into the above equality, then we obtain the formula.
\end{proof}
 \begin{table}[ht]\label{pic1}
\begin{center}
    \begin{tabular}{|l|c|r|r|} \hline 
      \multicolumn{1}{|c|}{$\mu $} & \multicolumn{3}{|c|}{Types of $\alpha $} \\  \hline\hline
      \multicolumn{1}{|c|}{$ \mu =1$ } & \multicolumn{3}{|c|}{$ \alpha^+:=-\tan(y+c)$} \\ \hline
     \multicolumn{1}{|c|}{$ \mu =0$ } & \multicolumn{1}{|c|}{$ \alpha^0_{\rm I}:=0$} & \multicolumn{2}{|c|}{$ \alpha^0_{\rm II}:=(y+c)^{-1}$} \\ \hline
     \multicolumn{1}{|c|}{$ \mu =-1$} & \multicolumn{1}{|c|}{$ \alpha^{-}_{\rm I}:=\tanh(y+c)$} & \multicolumn{1}{|c|}{$ \alpha^{-}_{\rm II}:=\coth(y+c)$} & \multicolumn{1}{|c|}{$ \alpha^{-}_{\rm III}:=\pm 1$} \\ \hline
    \end{tabular}
          \caption{$\mu $ and $\alpha $.}
  \end{center}
  \end{table}
  
  \begin{table}[ht]
  \begin{center}
    \begin{tabular}{|l|c|r|r|} \hline 
      \multicolumn{1}{|c|}{$\alpha $} & \multicolumn{3}{|c|}{Examples of ZMC surfaces} \\  \hline\hline
      \multicolumn{1}{|c|}{$ \alpha^+$} & \multicolumn{3}{|c|}{Spacelike Scherk surface} \\ \hline
     \multicolumn{1}{|c|}{$ \alpha^0_{\rm I}$} & \multicolumn{3}{|c|}{The lightlike plane, surfaces which change type constructed in \cite{FujimoriETAL2}}  \\ \hline
     \multicolumn{1}{|c|}{$ \alpha^0_{\rm II}$} & \multicolumn{3}{|c|}{Parabolic and hyperbolic catenoids, the light cone}  \\ \hline
      \multicolumn{1}{|c|}{$ \alpha^{-}_{\rm I}$} & \multicolumn{3}{|c|}{Timelike Scherk surface of second kind}  \\ \hline
     \multicolumn{1}{|c|}{$ \alpha^-_{\rm II}$} & \multicolumn{3}{|c|}{Timelike Scherk surface of first kind}  \\ \hline
      \multicolumn{1}{|c|}{$ \alpha^-_{\rm III}$} & \multicolumn{3}{|c|}{A Klyachin's example in \cite{Klyachin}}  \\ \hline
    \end{tabular}
    \caption{Examples of ZMC surfaces according to types of $\alpha$.}
\end{center}
  \end{table}
 \subsection{ZMC surfaces of Riemann type foliated by Euclidean circles containing a lightlike line}
By Theorem \ref{circle}, if a ZMC surface of Riemann type which is foliated by circles in spacelike parallel planes either has exactly two causal characters or satisfies the conditions that $b=-2a$ and $r^2>1/a$, the surface contains a lightlike line segment. Each of ZMC surfaces (\ref{eq:splike}) contains a lightlike line (segment), and each of the ZMC surfaces (\ref{eq:Singular_splike}) contains two lightlike lines.

\begin{theorem}\label{thm:sp}
ZMC surfaces of Riemann type which are given by (\ref{eq:splike}) or (\ref{eq:Singular_splike}) and have lightlike lines have following characteristics of ZMC surfaces.
\begin{itemize}
\item[(1)] General type:
\begin{enumerate}
  \item[(i)] In the case when $b=2a$, the surface
\begin{center}
 $X(r,\theta) = \left(\displaystyle\int ^r _{0}\cfrac{as^2}{as^2+1}~ds+r\cos\theta , r\sin\theta , \displaystyle\int ^r _{0}\cfrac{1}{as^2+1}~ds
\right)$ \\[8pt]
\end{center}
is an $\alpha ^+$-type ZMC surface with the characteristic  $\mu=|a|$ along its lightlike line (see Figure 3).
\item[(ii)] In the case when $b=-2a$ and $0<r^2<1/a$, the surface
\begin{center}
 $X(r,\theta) = \left(\displaystyle\int ^r _{0}\cfrac{as^2}{1-as^2}\,ds+r\cos\theta , r\sin\theta , \displaystyle\int ^r _{0}\cfrac{1}{1-as^2}\,ds
\right)$ \\[8pt]
\end{center}
is an $\alpha ^-_{\rm II}$-type ZMC surface with the characteristic  $\mu=-a$ along its lightlike line (see Figure 4).
\item[(iii)] In the case when $b=-2a$ and $r^2$, $r_0^2>1/a$, the surface
\begin{center}
 $X(r,\theta) = \left(\displaystyle\int ^r _{r_0}\cfrac{as^2}{as^2-1}\,ds+r\cos\theta , r\sin\theta , \displaystyle\int ^r _{r_0}\cfrac{1}{as^2-1}\,ds
\right)$ \\[8pt]
\end{center}
is an $\alpha ^-_{\rm I}$-type ZMC surface with the characteristic  $\mu=-a$ along its lightlike line (see Figure 2).
\end{enumerate} 
\item[(2)] Singular type:
every singular type surface (\ref{eq:Singular_splike}) is an $\alpha ^-_{\rm III}$-type ZMC surface with the characteristic  $\mu=-a$ along each of its lightlike lines (see Figure 5).
\end{itemize}
\end{theorem}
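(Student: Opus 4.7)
The plan is to apply Lemma~\ref{lemma:characteristic} in each of the four subcases. For every surface in the statement, the preparatory step is to move the lightlike line, located explicitly in the proof of Theorem~\ref{thm:causalofspacelike}, onto the standard axis $\{(0,t,t):t\in\mathbb{R}\}$ required by the setup of Section~5.1. In every subcase the line lies in a plane of the form $\{y=0,\ x\pm z=c_0\}$, so the required isometry is an obvious translation along the $x$-axis followed by a $\pi/2$ spacelike rotation of the form $(x,y,z)\mapsto(\pm y,\mp x,z)$; these are Lorentz isometries because they act trivially on the $t$-axis and orthogonally on the spacelike $xy$-plane.

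After this normalization I reparametrize the surface by $u=\theta-\theta_0$ and $v=r$, where $\theta_0\in\{0,\pi\}$ is the angle giving the line, so that $\{u=0\}$ is the lightlike line itself. The partial derivatives $x_u,\ y_{uu},\ t_{uu}$ of the normalized surface at $u=0$ are then elementary: a short direct calculation will produce $x_u(0,v)=\pm v$, $t_u(0,v)=t_{uu}(0,v)=0$, and $y_{uu}(0,v)=\pm v$, so Lemma~\ref{lemma:characteristic} yields
\begin{equation*}
\alpha\bigl(y(0,v)\bigr)=\frac{t_{uu}(0,v)-y_{uu}(0,v)}{x_u(0,v)^2}=\pm\frac{1}{v}.
\end{equation*}
I then rewrite $\alpha$ as a function of $y$ using that along the line $y(0,v)=\int_{0}^{v}1/\Delta(s)\,ds$ (with lower limit $r_0$ in case (iii)); the relevant antiderivatives are $\int ds/(as^2+1)=\arctan(\sqrt{a}s)/\sqrt{a}$ for (i), $\int ds/(1-as^2)=\mathrm{arctanh}(\sqrt{a}s)/\sqrt{a}$ for (ii), and $\int ds/(as^2-1)=-\mathrm{arccoth}(\sqrt{a}s)/\sqrt{a}$ for (iii). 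These convert $\alpha$ into $\sqrt{a}\cot(\sqrt{a}y)$, $\sqrt{a}\coth(\sqrt{a}y)$, and $\sqrt{a}\tanh(\sqrt{a}y+c)$ respectively, which match $\alpha^+$, $\alpha^-_{\mathrm{II}}$, and $\alpha^-_{\mathrm{I}}$ of Table~1 up to translation. Substituting each back into the defining ODE $\alpha'+\alpha^2+\mu=0$ yields $\mu=a,\ -a,\ -a$, in agreement with the statement.

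For the singular type surface (\ref{eq:Singular_splike}), the two lightlike lines at $\theta=0$ and $\theta=\pi$ both have direction $(1,0,1)$, so an analogous translation and rotation places either of them on $\{(0,t,t)\}$. Because the surface is translation-invariant along $(1,0,1)$, the normalized parametrization is affine in $v$ and trigonometric in $u$; consequently $x_u(0,v),\ y_{uu}(0,v),\ t_{uu}(0,v)$ are all $v$-independent and the resulting $\alpha$ is constant, equal to $\pm\sqrt{a}$. This is of type $\alpha^-_{\mathrm{III}}$, and the ODE immediately gives $\mu=-a$.

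The main obstacle is organizational rather than conceptual: one must track the direction of the lightlike line (which depends on the choice $\theta_0\in\{0,\pi\}$ and on the subcase) so that the correct $\pi/2$ rotation is applied, and one must handle subcase (iii) carefully because the integration constant $r_0$ is nonzero and because on the relevant interval $r^2>1/a$ the antiderivative of $1/(as^2-1)$ is an $\mathrm{arccoth}$ rather than an $\mathrm{arctanh}$ — this is precisely what forces $\alpha^-_{\mathrm{I}}$ (a $\tanh$) rather than $\alpha^-_{\mathrm{II}}$ (a $\coth$) in that subcase, and it justifies the assumption $r_0^2>1/a$ appearing in the statement.
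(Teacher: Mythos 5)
Your proposal is correct and follows essentially the same route as the paper: normalize the lightlike line to $\{(0,t,t)\}$ by a translation and a quarter-turn about the $t$-axis, apply Lemma~\ref{lemma:characteristic} to get $\alpha=\pm 1/r$, and then determine $\mu$ and the type of $\alpha$. The only (cosmetic) difference is in the last step, where the paper computes $r_y$ by implicit differentiation and evaluates $\alpha'+\alpha^2$ directly, whereas you integrate $y(r)$ explicitly and invert to exhibit $\alpha$ in the closed forms of Table~1; both yield $\mu=a,-a,-a,-a$ and the stated types.
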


\begin{proof}
First we prove (1). Let us consider a surface $X$ which satisfies the condition (i). By the proof of Theorem \ref{thm:causalofspacelike}, the lightlike line of the surface is 
\begin{equation*}
c(r)=X(r,\pi) =\displaystyle\int^r_0 \cfrac{1}{1+as^2}~ds \left( -1, 0, 1 \right).
\end{equation*}
By considering the following coordinate transformation on $\mathbb{L}^3$,
\begin{center}
$  A= \left(
    \begin{array}{ccc}
      0 & 1 & 0 \\
      -1 & 0 & 0 \\
      0 & 0 & 1 
 \end{array}
  \right) $,
  \end{center}
and the immersion $\tilde{X}(r, \theta) := A\circ X(r, \theta)$. Then $\tilde{X}$ is written as
\begin{align}
 &\tilde{X}(r, \theta) =\left( x(r, \theta),  y(r, \theta),  t(r, \theta) \right) \nonumber \\
&=\left(  r\sin\theta  , -\displaystyle\int ^r _{0}\cfrac{as^2}{as^2+1}\,ds-r\cos\theta , \displaystyle\int ^r _{0}\cfrac{1}{as^2+1}\,ds \right) , \nonumber
\end{align}
and the lightlike part of the surface is in $\{ (0, t, t) \mid t \in \mathbb{R} \}$. We write $\tilde{X}$ as $X$ again and determine the function $\alpha$.

By computing partial derivatives of coordinate functions on the lightlike line, $x_{\theta}(r, \pi)=-r$, $y_{\theta \theta}(r, \pi)=-r$, and $t_{\theta \theta}(r, \pi )=0$. By using Lemma \ref{lemma:characteristic}, we get 
\begin{equation*}
0=\alpha (y(r))r^2-r.
\end{equation*}
Therefore we obtain $\alpha (y(r))=1/r$. \\

On the other hand, by differentiating $y(r, \theta)$ with respect to $y$,
\begin{equation}
 1=-\frac{ar^2}{ar^2+1}r_y -r_y\cos \theta +r\sin \theta \theta _y,\nonumber
\end{equation}
and then we get $r_{y}(r, \pi )=ar^2+1$, and hence,
\begin{align}
\frac{d\alpha}{dy}(y(r))+\alpha ^2(y(r))&=-\frac{1}{r^2}\times (ar^2+1) +\frac{1}{r^2}  \nonumber \\
&=-a. \nonumber
\end{align}
Therefore we obtain the characteristic $\mu=a$, and hence the surface is an $\alpha ^+$-type ZMC surface with the characteristic $\mu =a$.

 For any surface $X$ which satisfies the condition (ii), we can check that $\alpha (y(r))=1/r$ and $\mu=-a$ by a similar calculation as in the case (i). Since $r^2<1/a$, the function $\alpha$ is unbounded. Therefore the surface is an $\alpha _{\rm II}^-$-type ZMC surface with the characteristic $\mu =-a$. 
 
For any surface $X$ which satisfies the condition (iii), we can check that $\alpha (y(r))=-1/r$ and $\mu=-a$ by a similar calculation as in the case (i) and (ii). Since $r^2>1/a$, the function $\alpha$ is bounded. Therefore the surface is an $\alpha _{\rm I}^-$-type ZMC surface with the characteristic $\mu =-a$. \\

Now we prove (2). We can check that $\alpha (y(r))=-\sqrt{a}$ and $\mu=-a$ along each of its lightlike lines by a similar calculation as in the above cases. Since the function $\alpha$ is a constant function, the surface is an $\alpha _{\rm III}^-$-type ZMC surface with the characteristic $\mu =-a$ along each of its lightlike lines. The proof has been completed.
\end{proof}
\begin{figure}[!h]
\begin{center}
\begin{tabular}{c}
\hspace{-1.2cm}
\begin{minipage}{0.4\hsize}
\begin{center}
\vspace{-0cm}
\includegraphics[clip,scale=0.30,bb=0 0 500 409]{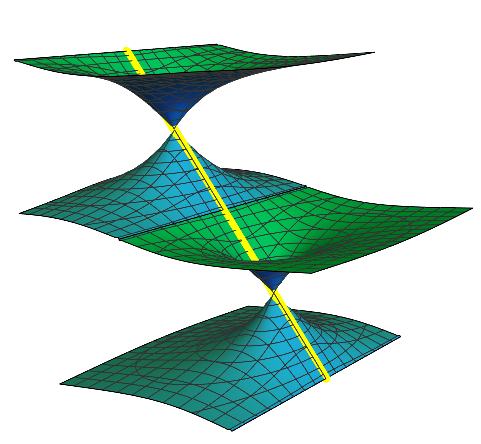}%
\vspace{0.5cm}
\caption{An $\alpha^+$-type surface foliated by Euclidean circles and straight lines (general type).}
\end{center}
\end{minipage}
\hspace{-1.0cm}
\begin{minipage}{0.4\hsize}
\begin{center}
\vspace{-0.4cm}
\includegraphics[clip,scale=0.30,bb=0 0 355 459]{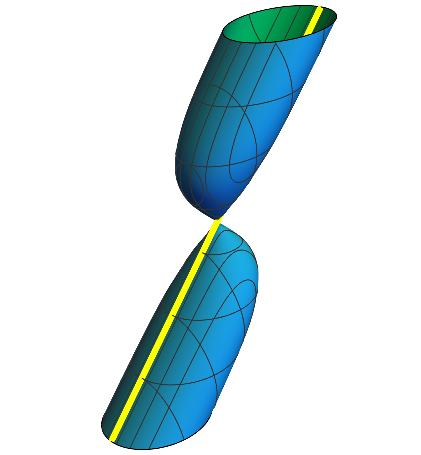}%
\vspace{0cm}
\hspace{-1cm}
\caption{An $\alpha^-_{\rm II}$-type surface foliated by Euclidean circles (general type).}
\end{center}
\end{minipage}
\hspace{-0.5cm}
\begin{minipage}{0.4\hsize}
\begin{center}
\vspace{-0.5cm}
\includegraphics[clip,scale=0.30,bb=0 0 455 449]{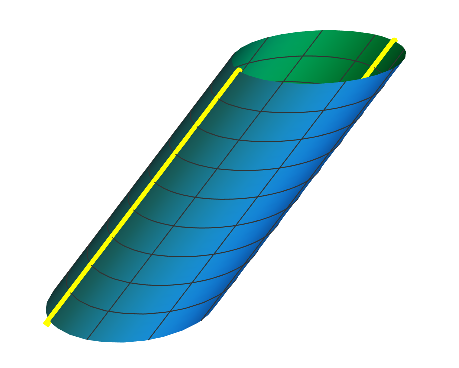}%
\vspace{0.3cm}
\hspace{-0.3cm}
\caption{An $\alpha^-_{\rm III}$-type surface foliated by Euclidean circles (Singular type).}
\end{center}
\end{minipage}

\end{tabular}
\end{center}
\end{figure}

 \subsection{ZMC surfaces of Riemann type foliated by hyperbolas containing a lightlike line}
By Theorem \ref{thm:causaloftimelike}, if a Type II ZMC surface of Riemann type which is foliated by circles in timelike parallel planes has exactly two causal characters, the surface contains a lightlike line (segment).
\begin{theorem} \label{thm:extimelike}
Type II ZMC surfaces of Riemann type given by (\ref{eq:type2}) which have exactly two causal characters and surfaces given by (\ref{eq:Singular_hyperbpla2}) have following characteristics of ZMC surfaces.
\begin{itemize}
\item[(1)] General type:
\begin{itemize}
\item[(i)] In the case when $a<b$, $a^2-b^2<0$, $0<c=\delta$, and $0<r^2<1/c$, the surface
\begin{equation*}
X(r,\theta ) =\left(
\displaystyle\int^r _{0} \cfrac{1}{1-cs^2}\,ds,  \int^r _{0} \cfrac{as^2}{1-cs^2}\,ds+r\sinh \theta, \int^r _{0} \cfrac{bs^2}{1-cs^2}\,ds+r\cosh\theta \right)   \nonumber
\end{equation*}
is an $\alpha ^-_{\rm II}$-type ZMC surface with the characteristic $\mu=-c^3/b^2$ along its lightlike line (see Figure 6).
\\
\item[(ii)] In the case when $a>b$, $a^2-b^2<0$, $0<c=\delta$, and $r^2, r^2_{0}>1/c$, the surface
\begin{equation*}
X(r,\theta ) =\left(
\displaystyle\int^r _{r_0} \cfrac{1}{cs^2-1}\,ds, \int^r _{r_0} \cfrac{as^2}{cs^2-1}\,ds+r\sinh \theta, \int^r _{r_0} \cfrac{bs^2}{cs^2-1}\,ds+r\cosh\theta \right)   \nonumber
\end{equation*}
is an $\alpha ^-_{\rm I}$-type ZMC surface with the characteristic $\mu=-c^3/b^2$ along its lightlike line (see Figure 7).
\end{itemize}
\item[(2)] Singular type:
every singular type surface (\ref{eq:Singular_hyperbpla2}) is an $\alpha ^-_{\rm III}$-type ZMC surface with the characteristic  $\mu=-c^3/b^2$ along its lightlike line (see Figure 8).
\end{itemize}
\end{theorem}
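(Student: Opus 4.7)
The plan is to follow the same scheme as in the proof of Theorem \ref{thm:sp}. In each of the three cases (1)(i), (1)(ii), and (2), the description of the lightlike locus already obtained in Theorem \ref{thm:causaloftimelike} shows that the lightlike line has direction $(c, a, b)$, and this vector is lightlike precisely because $c^2 = -a^2 + b^2$. Consequently, a single ambient isometry will bring the lightlike line to the standard line $\{(0,t,t) \mid t \in \mathbb{R}\}$ in all three cases. I would take the $yt$-plane boost with $\cosh\phi = b/c$ and $\sinh\phi = -a/c$, which sends $(c,a,b)$ to $(c,0,c)$, followed by the $\pi/2$ rotation about the $t$-axis, producing $(0,c,c)$. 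Writing the composition as $A$ and setting $\tilde{X} = A \circ X$, the lightlike locus of $\tilde{X}$ then lies in $\{x = 0,\ t = y\}$, which is exactly the setting needed for Lemma \ref{lemma:characteristic}.

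After this normalization I would compute the relevant partial derivatives of the coordinate functions of $\tilde{X}$ along the lightlike line (at parameter value $\theta_1$, $\theta_2$, or $\theta_0$ depending on the case), in direct analogy with the proof of Theorem \ref{thm:sp}. The ODEs for $r$, $f$, $g$ recalled in Remark \ref{remark:ODE2}, together with the specialization $c = \delta$ that reduces $\Delta$ to $|1 - cr^2|$, make the expressions tractable: $x_\theta$, $y_{\theta\theta}$, and $t_{\theta\theta}$ along the lightlike line should all simplify to elementary functions of $r$, and substitution into Lemma \ref{lemma:characteristic} should yield $\alpha(y(r))$ as a simple rational expression in $r$ (a constant multiple of $1/r$, paralleling the formula $\alpha(y(r)) = \pm 1/r$ obtained in Theorem \ref{thm:sp}). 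A second differentiation, combined with the chain-rule formula for $r_y$ on the lightlike line obtained by implicit differentiation of the second component of $\tilde{X}$, should then produce the identity $d\alpha/dy + \alpha^2 = c^3/b^2$, from which \eqref{eq:b_2} gives $\mu = -c^3/b^2$.

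The classification into the three $\alpha^-$-types then follows immediately from the range of $\alpha$ dictated by the hypothesis on $r$. In case (i), the constraint $0 < r^2 < 1/c$ allows $r \to 0$, so $\alpha$ is unbounded on the lightlike line, and Table 1 matches this with type $\alpha^-_{\rm II}$. In case (ii), the constraint $r^2 > 1/c$ keeps $\alpha$ bounded away from infinity, giving type $\alpha^-_{\rm I}$. In the singular case the transverse radius is constant, so $\alpha$ is constant and the surface has type $\alpha^-_{\rm III}$.

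The main obstacle I anticipate is the bookkeeping for the isometry $A$, which, unlike the signed permutation used in Theorem \ref{thm:sp}, genuinely mixes the $y$- and $t$-components. A clean way to sidestep explicit matrix manipulations is to note that the three hypotheses of Lemma \ref{lemma:characteristic} on the partial derivatives of $t$ are equivalent to the two intrinsic statements that $L$ is a lightlike curve on the image and that the tangent plane to the image along $L$ is the standard lightlike plane $\{x=0\}$; both can be verified directly from the normal vector computed in the proof of Theorem \ref{thm:causaloftimelike}. With those verified, only the scalar $\alpha(y(r))$ needs to be extracted, and the remainder of the argument is a single chain-rule calculation repeated with the appropriate substitution ($\Delta = 1 - cr^2$, $\Delta = cr^2 - 1$, or $r \equiv 1/\sqrt{c}$) in each of the three cases.
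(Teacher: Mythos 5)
Your overall scheme---normalize the lightlike line to $\{(0,t,t)\}$ by an ambient isometry, extract $\alpha$ from Lemma \ref{lemma:characteristic}, get $\mu$ by one more chain-rule differentiation, and sort the three cases by whether $\alpha$ is unbounded, bounded, or constant---is exactly the paper's, and your boundedness criteria for distinguishing $\alpha^-_{\rm I}$, $\alpha^-_{\rm II}$, $\alpha^-_{\rm III}$ are correct. But there is one step that fails as written: your choice of normalizing isometry does not reproduce the stated value $\mu=-c^3/b^2$. The characteristic is \emph{not} invariant under all isometries preserving the line $\{(0,t,t)\}$; a boost with $(0,s,s)\mapsto(0,e^{\phi}s,e^{\phi}s)$ sends $\alpha(y)\mapsto e^{-\phi}\alpha(e^{-\phi}y)$ and hence $\mu\mapsto e^{-2\phi}\mu$. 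The paper uses the rotation about the $t$-axis with matrix entries $a/b,\pm c/b$, which sends the direction $(c,a,b)$ to $(0,b,b)$, so the line is parametrized by $y(r)=(b/c)\int_0^r ds/\Delta$. Your boost-plus-rotation sends $(c,a,b)$ to $(0,c,c)$, so your $y(r)=\int_0^r ds/\Delta$; the two frames differ by a boost with $e^{\phi}=c/b$. Carrying out your computation in case (i) gives $x_\theta(r,\theta_1)=-r$, $y_{\theta\theta}=0$, $t_{\theta\theta}(r,\theta_1)=r$, hence $\alpha=1/r$, $r_y=\Delta(r)$, and $d\alpha/dy+\alpha^2=(1-\Delta)/r^2=c$, i.e.\ $\mu=-c$, not $-c^3/b^2$. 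This is consistent with the transformation law $\mu\mapsto(b/c)^2\mu$. The sign of $\mu$ and therefore the $\alpha^-$-type classification survive, but the numerical value in the theorem does not; to prove the statement as written you must either use the paper's rotation (for which $y_{\theta\theta}(r,\theta_1)=(a^2/bc)r$, $t_{\theta\theta}(r,\theta_1)=(b/c)r$, $\alpha=c/(br)$, $r_y=(c/b)\Delta$, giving $\mu=-c^3/b^2$) or supplement your computation with the $e^{-2\phi}$ conversion factor.

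Two smaller points. First, in case (ii) the line does not pass through the origin, so a translation (by $(1/c)(0,a,b)$ in the paper) is needed before your linear isometry; translations are harmless for $\mu$ but must be included for Lemma \ref{lemma:characteristic} to apply. Second, in your ``intrinsic'' reformulation of the hypotheses of Lemma \ref{lemma:characteristic}, the tangent plane along $L$ should be the \emph{lightlike} plane $\{y=t\}$ spanned by $e_1$ and $e_2+e_3$ (equivalently $t_x(0,y)=0$), not the timelike plane $\{x=0\}$; with that correction the reformulation is fine, since the tangent plane of a lightlike point containing the null direction $e_2+e_3$ is forced to be $\{y=t\}$.
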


\begin{proof} 
First we prove (1). For any surface $X$ which satisfies the condition (i), by the proof of Theorem \ref{thm:causaloftimelike}, the lightlike line of the surface is 
\begin{equation*}
c(r) = X(r, \theta _{1})=
\cfrac{1}{c}\displaystyle\int^r _0 \cfrac{1}{\Delta (s)}\,ds\left( c, a, b \right ). \nonumber
\end{equation*}
At first, to calculate the characteristic of the surface, we find an element of the isometry group which maps the vector $(c, a, b)$ to the vector $(0, t_0, t_0 )$ for some $t_0 \in \mathbb{R} \setminus \{0\}$. By considering a rotation about the time axis, one of such elements can be written as 
\begin{center}
$  A= \left(
    \begin{array}{ccc}
    a/b &-c/b  & 0 \\
      c/b  & a/b  & 0 \\
      0 & 0 & 1 
 \end{array}
  \right) $.
  \end{center}
Therefore if we consider the immersion $\tilde{X}(r, \theta ) := A\circ X(r, \theta )$, since
\[
\Delta(s)=1-cs^2,\quad \cfrac{a}{b}\displaystyle\int^r_{0} \cfrac{1}{\Delta (s)}\,ds-\frac{c}{b}\displaystyle\int^r_{0} \cfrac{as^2}{\Delta (s)}\,ds=\frac{a}{b}r,
\] hence $\tilde{X}$ is written as
  \begin{align}
 &\tilde{X}(r, \theta) =\left( x(r, \theta),  y(r, \theta),  z(r, \theta) \right) \nonumber \nonumber \\
&=\left( \frac{a}{b}r-\frac{c}{b}r\sinh\theta , \frac{c}{b}\displaystyle\int^r_{0} \cfrac{1}{\Delta (s)}\,ds+\frac{a}{b}\displaystyle\int^r_{0} \cfrac{as^2}{\Delta (s)}\,ds+\frac{a}{b}r\sinh\theta , \displaystyle\int ^r_{0}\cfrac{bs^2}{\Delta (s)}\,ds+r\cosh\theta \right) , \nonumber
\end{align}
and the lightlike part of the surface is in $\{ (0, t, t) \mid t \in \mathbb{R} \}$. We write $\tilde{X}$ as $X$ again and determine the function $\alpha$. Noting that $\cosh \theta _{1}=b/c$, $\sinh \theta _{1}=a/c$, we can compute partial derivatives of coordinate functions on the lightlike line and get $x_{\theta}(r, \theta _{1})=-r$, $y_{\theta \theta}(r, \theta _{1})=(a^2/bc)r$, and $t_{\theta \theta}(r, \theta _{1})=(b/c)r$. By using Lemma \ref{lemma:characteristic}, we get 
\begin{equation*}
\frac{b}{c}r=\alpha (y(r))r^2+\frac{a^2}{bc}r.
\end{equation*}
Therefore we obtain $\alpha (y(r))=c/br$. \\

On the other hand, by differentiating $x(r, \theta)$ and $y(r, \theta)$ with respect to $y$,
\[
\begin{cases}{}
 0=\frac{a}{b}r_{y}-\frac{c}{b}r_{y}\sinh\theta -\frac{c}{b}r\cosh\theta \theta_{y} \\
 1=\frac{c}{b\Delta(r)}r_y + \frac{a^2r^2}{b\Delta(r)}r_y +\frac{a}{b}\sinh \theta r_y +\frac{a}{b}r\cosh \theta \theta_y , 
\end{cases}
\]
and then we get $r_{y}(r, \theta)=(c/b)\Delta(r) $, and hence,
\begin{align}
\frac{d\alpha}{dy}(y(r))+\alpha ^2(y(r))&=-\frac{c}{br^2}\times \frac{c}{b}\Delta(r) +(\frac{c}{br})^2 \nonumber \\
&=\frac{c^3}{b^2}. \nonumber
\end{align}
Therefore we obtain the characteristic $\mu=-c^3/b^2$, and hence $\alpha$ is of $\alpha^{-}$-type. Since $\alpha$ is bounded, the surface is of $\alpha^{-}_{\rm II}$-type. 

For the surface $X$ which satisfies the condition (ii), by the proof of Theorem \ref{thm:causaloftimelike}, the lightlike part of the surface is written as
\begin{equation}
c(r) =X(r, \theta _{2})=
\cfrac{1}{c}\displaystyle\int^r _{r_0} \cfrac{1}{\Delta (s)}\,ds\left( c, a, b \right ) -\cfrac{1}{c}\left( 0, a, b \right ). \nonumber
\end{equation}

Therefore if we consider the immersion $\tilde{X}(r, \theta ) := A\circ [X(r, \theta )+(1/c)\left( 0, a, b \right )]$, since 
\[
\Delta(s)=cs^2-1,\quad \frac{a}{b}\int^r_{r_0} \frac{1}{\Delta (s)}\,ds-\frac{c}{b}\int^r_{r_0} \frac{as^2}{\Delta (s)}\,ds=\frac{a}{b}r_{0}-\frac{a}{b}r,
\] and hence $\tilde{X}$ is written as
  \begin{align}
 &\tilde{X}(r, \theta) =\left( x(r, \theta),  y(r, \theta),  t(r, \theta) \right) \nonumber  \\
&=\left( -\frac{a}{b}r-\frac{c}{b}r\sinh\theta , \frac{c}{b}\displaystyle\int^r_{r_0} \cfrac{1}{\Delta (s)}\,ds+\frac{a}{b}\displaystyle\int^r_{r_0} \cfrac{as^2}{\Delta (s)}\,ds+\frac{a}{b}r\sinh\theta +\frac{a^2}{bc}r_{0}, \displaystyle\int ^r_{r_0}\cfrac{bs^2}{\Delta (s)}\,ds+r\cosh\theta +\frac{b}{c}r_{0}\right) , \nonumber
\end{align}
and the lightlike part of the surface is in $\{ (0, t, t) \mid t \in \mathbb{R} \}$. We write $\tilde{X}$ as $X$ again and determine the function $\alpha$. Noting that $\cosh \theta _{2}=-b/c$, $\sinh \theta _{2}=-a/c$, we can compute partial derivatives of coordinate functions on the lightlike line and get $x_{\theta}(r, \theta _{2})=r$, $y_{\theta \theta}(r, \theta _{2})=-(a^2/bc)r$, and $t_{\theta \theta }(r, \theta _{2})=-(b/c)r$. By using Lemma \ref{lemma:characteristic}, we get 
\begin{equation*}
-\frac{b}{c}r=\alpha (y(r))r^2-\frac{a^2}{bc}r.
\end{equation*}
Therefore we obtain $\alpha (y(r))=-c/br$. 

On the other hand, by differentiating $x(r, \theta)$ and $y(r, \theta)$ with respect to $y$,
\[
\begin{cases}{}
 0=-\frac{a}{b}r_{y}-\frac{c}{b}r_{y}\sinh\theta -\frac{c}{b}r\cosh\theta \theta_{y} \\
 1=\frac{c}{b\Delta(r)}r_y + \frac{a^2r^2}{b\Delta(r)}r_y +\frac{a}{b}\sinh \theta r_y +\frac{a}{b}r\cosh \theta \theta_y , 
\end{cases}
\]
and then we get $r_{y}(r, \theta)=(c/b)\Delta(r) $, and hence,
\begin{align}
\frac{d\alpha}{dy}(y(r))+\alpha ^2(y(r))&=\frac{c}{br^2}\times \frac{c}{b}\Delta(r) +(-\frac{c}{br})^2 \nonumber \\
&=\frac{c^3}{b^2}. \nonumber
\end{align}
Therefore we obtain the characteristic $\mu=-c^3/b^2$, and hence $\alpha$ is of $\alpha^{-}$-type. Since $\alpha$ is unbounded, the surface is of $\alpha^{-}_{\rm I}$-type.

Now we prove (2). We can check that $\alpha (y(r))=\pm c\sqrt{c}/b$ (the two signs represent the cases $a<b$ and $a>b$, respectively) and $\mu=-c^3/b^2$ by a similar calculation as in the above cases. Since the function $\alpha$ is a constant function, the surface is an $\alpha _{\rm III}^-$-type ZMC surface with the characteristic $\mu =-c^3/b^2$. The proof has been completed.
\end{proof}

 \begin{figure}
\begin{center}
\begin{tabular}{c}
\hspace{-1.8cm}
\begin{minipage}{0.45\hsize}
\begin{center}
\vspace{-1.5cm}
\hspace{-1.0cm}
\includegraphics[clip,scale=0.355,bb=0 0 300 510]{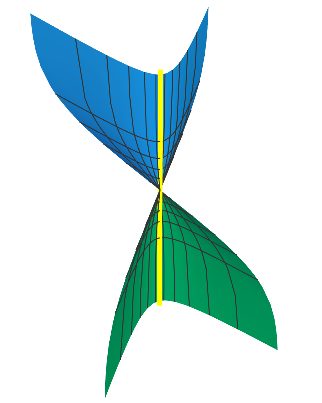}%
 \caption{An $\alpha^-_{\rm II}$-type surface foliated by hyperbolas (general type).}  
\end{center}
\end{minipage}
\hspace{-0.5cm}
\hspace{-1.0cm}
\begin{minipage}{0.45\hsize}
\begin{center}
\vspace{-2.5cm}
\includegraphics[clip,scale=0.4,bb=0 0 354 470]{Fig1b.png}%
\vspace{0.8cm}
\caption{An $\alpha^-_{\rm I}$-type surface foliated by hyperbolas (general type).}
\end{center}
\end{minipage}
\hspace{-1.0cm}
\begin{minipage}{0.45\hsize}
\begin{center}
\vspace{-0.5cm}
\includegraphics[clip,scale=0.30,bb=0 0 454 470]{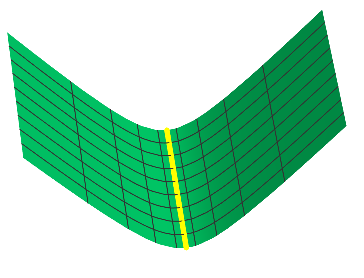}%
\vspace{0.5cm}
\caption{An $\alpha^-_{\rm III}$-type surface foliated by hyperbolas (singular type).}
\end{center}
\end{minipage}
\end{tabular}
\end{center}
\end{figure}

 \subsection{ZMC surfaces of Riemann type foliated by parabolas containing a lightlike line}
By Theorem \ref{thm:causaloflightlike}, if a ZMC surface of Riemann type which is foliated by circles in lightlike parallel planes has exactly two causal characters, the surface contains a lightlike line (segment). 
\begin{theorem} \label{thm:exlightlike}
ZMC surfaces of Riemann type given by (\ref{eq:4}) which have exactly two causal characters have the following characteristics of ZMC surfaces.
\begin{itemize}
\item[(1)] General type:
\begin{itemize}
\item[(i)] In the case when $a>0$ and $p=0$, the surface (\ref{eq:4}) is an $\alpha ^+$-type ZMC surface with the characteristic $\mu=2a^5/(a^2+b^2)^2$ along its lightlike line (see Figure 9).
\item[(ii)] In the case when $a<0$ and $p=0$, the surface (\ref{eq:4}) is an $\alpha ^-_{\rm I}$-type ZMC surface with the characteristic $\mu=2a^5/(a^2+b^2)^2$ along its lightlike line (see Figure 10).
\end{itemize}
\item[(2)] Singular type:
In the case when $a<0$ and $p=0$, the surface (\ref{eq:4}) is an $\alpha ^-_{\rm III}$-type ZMC surface with the characteristic $\mu=2a^5/(a^2+b^2)^2$ along its lightlike line (see Figure 11).
\end{itemize}
\end{theorem}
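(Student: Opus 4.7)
My plan is to follow the template established in Theorems \ref{thm:sp} and \ref{thm:extimelike}: in each case I will identify the lightlike line, apply an orientation-preserving isometry of $\mathbb{L}^3$ to move it into the standard form $\{(0,t,t) : t \in \mathbb{R}\}$, extract $\alpha$ by means of Lemma \ref{lemma:characteristic}, compute $\mu$ from the characteristic ODE $d\alpha/dy + \alpha^2 + \mu = 0$, and finally read off the type of $\alpha$ from its qualitative behavior.

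By the proof of Theorem \ref{thm:causaloflightlike}, in all three cases the lightlike line has direction vector $d = (-2b/a,\, 1-b^2/a^2,\, -1-b^2/a^2)$, which is lightlike but does not lie along any coordinate axis. The first nontrivial step is to pin down the appropriate isometry. I expect that a single rotation $A$ about the $t$-axis will suffice (no hyperbolic boost should be required), with the angle $\phi$ determined by simultaneously requiring that the $x$-component of $Ad$ vanish and that its $y$- and $t$-components coincide. Solving these two constraints gives $\cos\phi = (b^2-a^2)/(a^2+b^2)$ and $\sin\phi = 2ab/(a^2+b^2)$, after which $Ad$ becomes a scalar multiple of $(0,1,1)$; composition with a suitable translation then places the lightlike line onto the standard line $L$.

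With the line in standard form, I reparametrize by setting $\tilde v = v - v_0(u)$, where $v_0(u) = b/(a r(u))$ is the $v$-value defining the lightlike curve (cf.\ (\ref{eq:v})--(\ref{eq:v'}); $v_0$ is constant in the singular case), so that the line sits at $\tilde v = 0$. Evaluating $X_v = (1, rv, rv)$ and $X_{vv} = (0, r, r)$ at $v = v_0$, transforming by $A$, and feeding the result into Lemma \ref{lemma:characteristic} should produce an expression of the form $\alpha = 2a^2 r(u)/(a^2+b^2)$. Since the tangent of the transformed lightlike line is the already-known constant vector $Ad$, the value of $dy/du$ on the line is immediate, so $d\alpha/dy$ follows by the chain rule. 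Combining this with $\alpha^2$ and then substituting the ODE (\ref{eq:r}), $r' = 2r^2 + a$, kills all the $r$-dependence (since $r' - 2r^2 = a$) and produces the asserted constant $\mu = 2a^5/(a^2+b^2)^2$.

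Finally, I classify the type of $\alpha$ case by case: in (1)(i), $a > 0$ forces $\mu > 0$, so $\alpha$ is of type $\alpha^+$; in (1)(ii), $a < 0$ forces $\mu < 0$, and because $r(u) = \sqrt{-a/2}\,\tanh(\sqrt{-2a}(-u+c))$ is bounded, so is $\alpha$, giving type $\alpha^-_{\rm I}$; in the singular case (2), $r \equiv \sqrt{-a/2}$ is constant, so $\alpha$ is constant, giving type $\alpha^-_{\rm III}$ after the usual homothetic normalization of $\mu$ to $\pm 1$. The main technical obstacle I anticipate is the rotation-angle determination in the preliminary step: the lightlike direction here mixes all three coordinates nontrivially, so verifying that a single $xy$-plane rotation suffices (and computing $\phi$ cleanly without sign confusion) is more delicate than in Theorems \ref{thm:sp} and \ref{thm:extimelike}. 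Once $\phi$ is in hand, the remaining computations are essentially bookkeeping parallel to those proofs.
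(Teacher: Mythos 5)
Your proposal is correct and follows essentially the same route as the paper's proof: identify the lightlike line, move it to $\{(0,t,t)\}$ by a rotation about the $t$-axis (the paper's matrix $A$ is your rotation composed with a time reversal, which leaves $\mu$ unchanged), extract $\alpha=\pm 2a^{2}r(u)/(a^{2}+b^{2})$ via Lemma \ref{lemma:characteristic} from $x_v$, $y_{vv}$, $t_{vv}$ on the line, and use $r'=2r^{2}+a$ together with $du/dy=\pm a^{2}/(a^{2}+b^{2})$ to obtain $\mu=2a^{5}/(a^{2}+b^{2})^{2}$ before classifying $\alpha$ by sign and boundedness. The only cosmetic difference is that you read $dy/du$ off the tangent vector $Ad$ of the line, whereas the paper obtains $u_y$ by implicitly differentiating the coordinate functions; both give the same value.
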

\begin{proof}
We can prove the theorem by a similar way as the proof of Theorem \ref{thm:sp}. First we prove (1). For any surface $X$ which satisfies the condition (i), by the proof of Theorem \ref{thm:causaloflightlike}, the lightlike line of the surface is
\begin{equation*}
c(u)=u \left( -\cfrac{2b}{a}, 1-\cfrac{b^2}{a^2}, -1-\cfrac{b^2}{a^2} \right )+\left( -\cfrac{2bc}{a}, -\cfrac{b^2c}{a^2}, -\cfrac{b^2c}{a^2} \right), \nonumber
\end{equation*}
where $v(u)=b/(ar(u))$.
If we take the following element of the isometry group
\begin{center}
$  A= \left(
    \begin{array}{ccc}
     (a^2-b^2)(a^2+b^2) &2ab/(a^2+b^2)  & 0 \\
     -2ab/(a^2+b^2)  & (a^2-b^2)/(a^2+b^2)  & 0 \\
      0 & 0 & -1 
 \end{array}
  \right) $
  \end{center}
and consider the immersion $\tilde{X}(u, v) := A\circ X(u, v)$, the lightlike part of the surface is in $\{ (0, t, t) \mid t \in \mathbb{R} \}$ up to an translation. We write $\tilde{X}$ as $X$ again and determine the function $\alpha$. If we write $\tilde{X}(u, v)$ as $\left( x(u, v), y(u, v), t(u, v) \right)$, by a straightforward computation, we get $x_v(u, v(u))=1$, $y_{vv}(u, v(u))=((a^2-b^2)/(a^2+b^2))r(u)$, and $t_{vv}(u, v(u))=-r(u)$. By using Lemma \ref{lemma:characteristic}, we get
\begin{equation*}
-r(u)=\alpha (y(u))+\frac{a^2-b^2}{(a^2+b^2)}r(u).
\end{equation*}
Therefore we obtain $\alpha (y(u))=-(2a^2/(a^2+b^2))r(u)$. \\
On the other hand, by differentiating $x(u, v)$ and $y(u, v)$ with respect to $y$,
\[
\begin{cases}{}
 0=\frac{a^2-b^2}{a^2+b^2}(f'(u)u_y+v_y)+\frac{2ab}{a^2+b^2}(g'(u)u_y+u_y+\frac{r'(u)u_yv^2+2r(u)vv_y}{2}) \\
 1=-\frac{2ab}{a^2+b^2}(f'(u)u_y+v_y)+\frac{a^2-b^2}{a^2+b^2}(g'(u)u_y+u_y+\frac{r'(u)u_yv^2+2r(u)vv_y}{2}) . 
\end{cases}
\]
By using the equalities $f'(u)=b/r^2(u)$, $g'(u)=b^2/2ar^2(u)$, $r'(u)=2r^2(u)+a$, and $v(u)=b/ar(u)$, we obtain $u_y(u, v(u))=a^2/(a^2+b^2)$, and hence, 
\begin{align}
\frac{d\alpha}{dy}(y(u))+\alpha ^2(y(u))&=-\frac{2a^2}{a^2+b^2}r'(u)\times \frac{a^2}{a^2+b^2} +(-\frac{2a}{a^2+b^2}r(u))^2 \\ \nonumber
&=-\frac{2a^5}{(a^2+b^2)^2}. \nonumber
\end{align}
Therefore we obtain the characteristic $\mu=2a^5/(a^2+b^2)^2>0$, and hence $\alpha$ is of $\alpha^{+}$-type.  

 For any surface $X$ which satisfies the condition (ii), the lightlike part of the surface is exactly the same as the case (i). Therefore we get $\alpha (y(u))=-(2a^2/(a^2+b^2))r(u)$ and the characteristic $\mu=2a^5/(a^2+b^2)^2<0$, and hence $\alpha$ is of $\alpha^{-}$-type.  Since $\alpha$ is non-constant and bounded, the surface is of $\alpha^{-}_{\rm I}$-type. 

Now we prove (2). We can check that
\[
\alpha (y(r))=-\sqrt{\frac{-a}{2}}\frac{2a^2}{a^2+b^2}\text{ and }\mu=\frac{2a^5}{(a^2+b^2)^2}
\]
 by a similar calculation as in the above cases. Since the function $\alpha$ is a constant function, the surface is an $\alpha _{\rm III}^-$-type ZMC surface with the characteristic $\mu=2a^5/(a^2+b^2)^2$. The proof has been completed.
\end{proof}
 \begin{figure}
\begin{center}
\begin{tabular}{c}
\hspace{-2cm}
\begin{minipage}{0.5\hsize}
\begin{center}
\vspace{-0.8cm}
\includegraphics[clip,scale=0.30,bb=0 0 504 510]{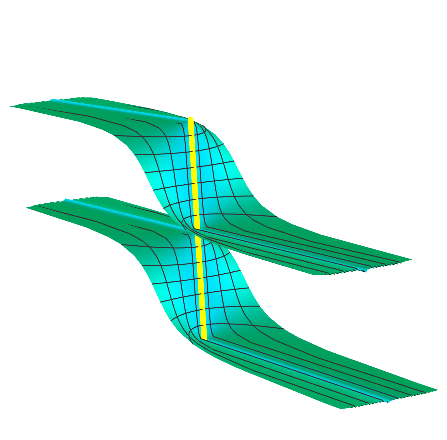}
\vspace{+0.4cm}
\caption{An $\alpha^+$-type surface foliated by parabolas and straight lines (general type).}
\end{center}
\end{minipage}
\hspace{-2cm}
\begin{minipage}{0.5\hsize}
\begin{center}
\includegraphics[clip,scale=0.30,bb=0 0 304 470]{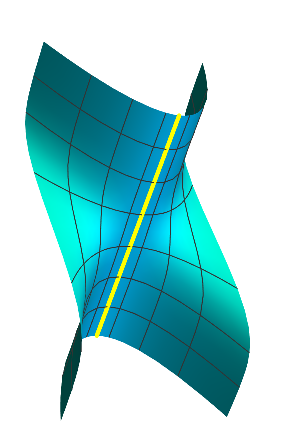}
\vspace{0.125cm}
\caption{An $\alpha^-_{\rm I}$-type surface foliated by parabolas and a straight line (general type).}
\end{center}
\end{minipage}
\hspace{-2cm}

\begin{minipage}{0.5\hsize}
\begin{center}
\includegraphics[clip,scale=0.30,bb=0 0 304 470]{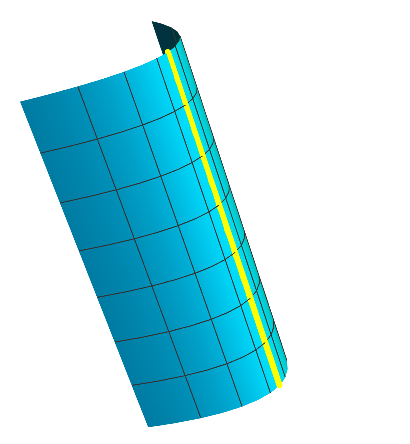}
\vspace{+0.2cm}
\caption{An $\alpha^-_{\rm III}$-type surface foliated by parabolas (singular type).}
\end{center}
\end{minipage}

\end{tabular}
\end{center}
\end{figure}
\section{New examples of ZMC entire graphs which have all causal characters}
In \cite{Kobayashi}, Kobayashi constructed the following ZMC entire graphs $t=f_i(x,y)$, $i=1,2$, which are called {\it the helicoid of the second kind} and {\it Scherk's surface of the first kind}, respectively:
 \begin{align}
 f_1(x, y):=x\tanh y,\hspace{2.6cm} \label{eq:6.1} \\
 f_2(x, y):=\log {(\cosh x)}-\log {(\cosh y)} \label{eq:6.2}.
\end{align}
In this section, as an application of Theorem \ref{thm:causaloflightlike}, we prove that there is a new ZMC entire graph of Riemann type which has all causal characters in addition to the above examples. 
\begin{theorem}\label{Therem:6}
The surfaces 
 \begin{equation}
 X(u,v) =\left(v, pe^{-2\sqrt{-2a}u}+u+\sqrt{\cfrac{-a}{2}}\cfrac{v^2}{2}, pe^{-2\sqrt{-2a}u}-u+\sqrt{\cfrac{-a}{2}}\cfrac{v^2}{2}\right),\quad (u, v)\in \mathbb{R}^2 \label{entiregraph}
 \end{equation}
 are ZMC entire graphs which have all causal characters, where $a$ and $p$ are negative constants. Moreover, these surfaces are not congruent to neither (\ref{eq:6.1}) nor (\ref{eq:6.2}). By coordinate transformations, translations, and homothetic changes, without loss of generality, we can take the constants $a=-2$ and $p=-1$. Therefore, we obtain exactly one new ZMC entire graph written as (\ref{entiregraph_intro}).
 \end{theorem}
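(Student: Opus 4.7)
The plan is to first identify (\ref{entiregraph}) with a singular type surface from Theorem \ref{thm:lightlike}(2). Substituting $b=0$ in case (2) of that theorem yields $r(u)=\sqrt{-a/2}$, $f(u)\equiv 0$, and $g(u)=pe^{-2\sqrt{-2a}u}$; plugging these into the general parametrization (\ref{eq:4}) reproduces (\ref{entiregraph}) verbatim. Consequently $X$ is a ZMC surface of Riemann type, and Theorem \ref{thm:causaloflightlike}(2)(iii) applied with $p<0$ immediately delivers the claim that the surface has all three causal characters.

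Next I would verify that $X$ is an entire graph over the $xy$-plane. Since $x(u,v)=v$, it suffices to show that for each fixed $v$ the map $u\mapsto y(u,v)$ is a bijection of $\mathbb{R}$, which reduces to analyzing $\psi(u):=pe^{-2\sqrt{-2a}u}+u$. Its derivative $\psi'(u)=1-2\sqrt{-2a}\,p\,e^{-2\sqrt{-2a}u}$ is strictly positive because $p<0$; moreover $\psi(u)\to -\infty$ as $u\to-\infty$ (the exponential term dominates with a negative coefficient) and $\psi(u)\to+\infty$ as $u\to+\infty$ (the exponential vanishes while the linear term grows). Hence $\psi$ is a diffeomorphism of $\mathbb{R}$, the projection $(u,v)\mapsto(x,y)$ is a global diffeomorphism of $\mathbb{R}^2$, and $t$ becomes a smooth function of $(x,y)$ on all of $\mathbb{R}^2$.

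For the non-congruence assertion, I would use the geometric foliation structure. Any ambient isometry of $\mathbb{L}^3$ transports the parabolic foliation $v\mapsto X(u_0,v)$ in parallel lightlike planes to a foliation of the target surface with the same property, so it suffices to check that neither (\ref{eq:6.1}) nor (\ref{eq:6.2}) can be foliated by parabolas lying in parallel lightlike planes. For the helicoid of the second kind $t=x\tanh y$, the intersection with a lightlike plane $\alpha y+\beta t=c$, $\alpha^2=\beta^2$, becomes the curve defined by $\alpha y+\beta x\tanh y=c$, which a straightforward case check shows is never a parabola in that plane; the analogous calculation rules out Scherk's surface of the first kind. Alternatively, one could invoke an intrinsic invariant such as the characteristic $\mu$ of the contained lightlike line, computed for (\ref{entiregraph}) in the style of Theorem \ref{thm:exlightlike} and compared against the corresponding data for Kobayashi's surfaces.

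Finally, to reduce to $a=-2$ and $p=-1$, I would combine two operations that preserve the class (\ref{entiregraph}). First, the reparametrization $u\mapsto u+c$ composed with a suitable translation in $\mathbb{L}^3$ replaces $p$ by $pe^{-2\sqrt{-2a}\,c}$ without changing $a$, so $|p|$ can be rescaled to any positive value. Second, an ambient homothety $X\mapsto\lambda X$ with $\lambda>0$ (which preserves ZMC) followed by the reparametrization $(u,v)\mapsto(u/\lambda,v/\lambda)$ sends $(a,p)$ to $(a/\lambda^2,\lambda p)$. Choosing $\lambda=\sqrt{-a/2}$ normalizes $a$ to $-2$, and then a subsequent reparametrization+translation normalizes $p$ to $-1$, yielding exactly (\ref{entiregraph_intro}). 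The main obstacle is the non-congruence step: although the geometric picture (the new surface carries a parabolic lightlike foliation that Kobayashi's do not) is transparent, rigorously excluding every possible ambient isometry requires either the careful case analysis of lightlike slices sketched above or the extraction of a genuinely ambient-isometry-invariant quantity such as $\mu$.
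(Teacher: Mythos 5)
Your treatment of the first two assertions matches the paper: you identify (\ref{entiregraph}) as the singular type surface of Theorem \ref{thm:lightlike}(2) with $b=0$ and invoke Theorem \ref{thm:causaloflightlike}(2)(iii) for the causal characters, and your monotonicity argument for $u\mapsto pe^{-2\sqrt{-2a}u}+u+\mathrm{const}$ is exactly the paper's bijectivity argument for $\phi_\alpha$. Your normalization of $(a,p)$ to $(-2,-1)$ via the homothety $(a,p)\mapsto(a/\lambda^2,\lambda p)$ and a shift in $u$ is correct and is actually spelled out in more detail than in the paper.

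The genuine gap is in the non-congruence step, which you yourself identify as the main obstacle. Two problems. First, your fallback of comparing the characteristic $\mu$ is not available: for $p<0$ the surface (\ref{entiregraph}) lies in class (2)(iii) of Theorem \ref{thm:causaloflightlike}, its lightlike locus is the two non-straight curves $c_{\pm}$ displayed in the paper, and it contains \emph{no} lightlike line, so there is no $\mu$ to extract. Second, your primary route --- showing that neither (\ref{eq:6.1}) nor (\ref{eq:6.2}) admits a foliation by parabolas in parallel lightlike planes --- is a legitimate isometry-invariant criterion, but the case check you sketch only examines lightlike planes of the form $\alpha y+\beta t=c$ with $\alpha^2=\beta^2$. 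A general lightlike plane is $n_1x+n_2y-n_3t=c$ with $n_1^2+n_2^2=n_3^2$, and the images of the foliating planes under an arbitrary ambient isometry can be any such parallel family, so the full two-parameter family of lightlike directions must be excluded; the computation is not carried out and, as restricted, does not suffice. For comparison, the paper sidesteps slicing entirely with two cheaper invariants: against (\ref{eq:6.1}) it observes that the helicoid of the second kind is ruled while (\ref{entiregraph}) is not (the non-ruledness requiring the separate argument of Proposition \ref{B_Prop} in Appendix \ref{B}); against (\ref{eq:6.2}) it counts connected components of the lightlike locus --- four for Scherk's surface of the first kind, given by $\tanh^2x+\tanh^2y=1$, versus two for (\ref{entiregraph}). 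Either of these would close your gap with far less work than completing the lightlike-slice analysis.
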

\begin{proof}
Since the surface (\ref{entiregraph}) is a ZMC surface which belongs to the class (2) (iii) in Theorem \ref{thm:causaloflightlike}, it has all causal characters. Since $p<0$, if we define the function $\phi _{\alpha}(u)=pe^{-2\sqrt{-2a}u}+u+\alpha$ for arbitrary $\alpha$ in $\mathbb{R}$, it is bijective on $\mathbb{R}$. Using the function $\phi _{\alpha}$, the coordinate function $t$ is written as follows:
\begin{equation*}
 t=p\exp\left(-2\sqrt{-2a}(\phi _{\sqrt{\frac{-a}{2}}\frac{x^2}{2}})^{-1}(y)\right)-(\phi _{\sqrt{\frac{-a}{2}}\frac{x^2}{2}})^{-1}(y)+{\sqrt{\frac{-a}{2}}\frac{x^2}{2}}.
 \end{equation*}
Therefore, the surface (\ref{entiregraph}) is a ZMC entire graph which has all causal characters.

The surface (\ref{entiregraph}) is not congruent to neither the surface (\ref{eq:6.1}) nor (\ref{eq:6.2}). In fact, the surface (\ref{eq:6.1}) is a ruled surface (see \cite[Example 2.8]{Kobayashi}). On the other hand, by a direct calculation, we can prove that the surface (\ref{entiregraph}) is not a ruled surface (see Proposition \ref{B_Prop} in Appendix \ref{B}). Therefore the surface (\ref{entiregraph}) is not congruent to  (\ref{eq:6.1}). Moreover, the surface (\ref{eq:6.2}) is lightlike on the set $\{(x, y) \mid \tanh^2{x}+\tanh^2{y}=1\}$, that is, the lightlike part of the surface (\ref{eq:6.2}) consists of four connected component. On the other hand, the lightlike part of the surface (\ref{entiregraph}) consists of the following two disjoint curves (see Figure 12):
 \begin{align*}
c_{+}(u)  =\left(\left(\cfrac{-8pe^{-2\sqrt{-2a}u}}{\sqrt{-2a}} \right)^\frac{1}{2}, -pe^{-2\sqrt{-2a}u}+u, -pe^{-2\sqrt{-2a}u}-u\right),\quad u\in \mathbb{R},\\
c_{-}(u)  =\left(-\left(\cfrac{-8pe^{-2\sqrt{-2a}u}}{\sqrt{-2a}} \right)^\frac{1}{2}, -pe^{-2\sqrt{-2a}u}+u, -pe^{-2\sqrt{-2a}u}-u\right),\quad u\in \mathbb{R},\end{align*}
therefore the surface (\ref{entiregraph}) is not congruent to the surface (\ref{eq:6.2}).
\end{proof}
\begin{figure}[htbp]
\begin{center}
\includegraphics[clip,scale=0.40,bb=0 0 410 350]{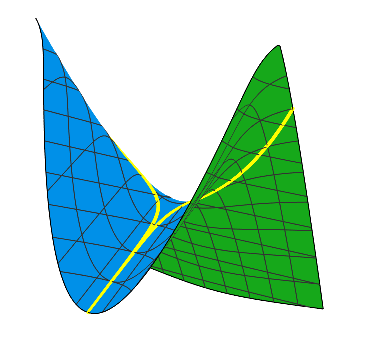}
 \vspace{0cm}
\caption{A ZMC entire graph which has all causal characters and whose lightlike curves.}
\end{center}
\end{figure}

\noindent
{\bf Acknowledgement}. The author would like to thank Professors Masaaki Umehara and Kotaro Yamada for suggesting the importance of ZMC entire graphs which have all causal characters. He is also grateful to Professor Miyuki Koiso for her many helpful comments. This work is supported by Grant-in-Aid for JSPS Fellows Number 15J06677. 

 \renewcommand{\theequation}{A.\arabic{equation}}
\setcounter{equation}{0}
\appendix
\section{The condition for a surface foliated by circles in lightlike parallel planes to be rotational}\label{A}
As pointed out in \cite{Lopez1}, surfaces which are foliated by circles in the lightlike parallel planes 
\begin{center}
$P_u=\{u(0,1,-1)+a(1,0,0)+b(0,1,1) | a,b \in \mathbb{R} \}$, $u \in J \subset \mathbb{R} $
\end{center}
are parametrized as (\ref{eq:4}) in Section \ref{Riemtype}. Each circle is the orbit of the identity component of the isometry group which fixes pointwise the axis $l=\text{span}\{ e_2+e_3\} $. In this appendix, we give a necessary and sufficient condition for a surface foliated by circles in lightlike parallel planes to be rotational.
\begin{propo}\label{A_Prop}
A surface given by (\ref{eq:4}) is rotational with the lightlike axis $l$, that is, invariant under the actions of the subgroup of the isometry group which fixes pointwise the line $l$ if and only if 
\begin{equation*}
f=0 {\text \ and\ } r(u)=-\cfrac{1}{2u}
\end{equation*}
hold.
\end{propo}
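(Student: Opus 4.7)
The plan is to impose the $G(l)$-invariance directly on the parametrization (\ref{eq:4}). From Section 3.1, every element of $G(l)$ is of the explicit form $A_\theta$ written there, so the first step is to check that $A_\theta$ preserves each lightlike plane $P_u$: computing the $e_2 - e_3$ component of $A_\theta p$ for $p = (p_1, p_2, p_3)$ gives simply $p_2 - p_3$, so the $u$-parameter $(p_2-p_3)/2$ is invariant. Consequently, rotational invariance of $X$ is equivalent to the requirement that for each $u \in J$ and each $\theta \in \mathbb{R}$ there exists $v' = v'(u, v, \theta)$ with $A_\theta X(u, v) = X(u, v')$.

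The main step is to evaluate this identity at the base point $v = 0$. Multiplying $A_\theta$ into $p_0 := X(u, 0) = (f(u), g(u) + u, g(u) - u)$ gives a vector whose first component equals $f(u) + 2\theta u$, so matching with $X(u, v')_1 = f(u) + v'$ forces $v' = 2\theta u$. Substituting into the second-coordinate equation and collecting powers of $\theta$ produces the polynomial identity
\[
f(u)\,\theta + \bigl(2u^2 r(u) + u\bigr)\,\theta^2 = 0 \quad \text{for all } \theta \in \mathbb{R},
\]
so that the coefficients vanish, giving $f(u) = 0$ and $r(u) = -1/(2u)$. The third coordinate produces exactly the same two conditions, so nothing more is imposed; in particular, $g$ remains unconstrained, as it should be, since $A_\theta$ fixes the $e_2 + e_3$ direction pointwise.

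For the converse, with $f \equiv 0$ and $r(u) = -1/(2u)$ substituted into (\ref{eq:4}), I would verify directly that $A_\theta X(u, v_0) = X(u, v_0 + 2\theta u)$ holds identically in $(u, v_0, \theta)$. The first coordinate is immediate, and matching the second and third coordinates reduces to the identities
\[
-\theta v_0 + 2\theta u v_0\, r(u) = 0 \quad\text{and}\quad -\theta^2 u + 2\theta^2 u^2\, r(u) = 0,
\]
which are exactly the content of $r(u) = -1/(2u)$. The only subtlety---not a deep obstacle but the one requiring care---is the bookkeeping of the $\theta$-dependent contributions on each side: those arising from the linear terms of the matrix action versus those from the expansion of the parabolic term $r(u)(v_0 + 2\theta u)^2/2$. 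Once these are paired correctly, both directions are in place and the proposition follows.
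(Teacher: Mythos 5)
Your argument is essentially the paper's: the paper imposes $A_\theta X(u,v)=X(\tilde u,\tilde v)$, reads off $\tilde u=u$ and $\tilde v=v+2u\theta$ from the coordinates, and extracts $f=0$, $r(u)=-1/(2u)$ by matching powers of $\theta$; you do the same, merely specializing to $v=0$ for necessity (which is legitimate, since $A_\theta$ preserves each plane $P_u$ and the surface meets $P_u$ only in the parabola $v\mapsto X(u,v)$) and then checking sufficiency for general $v_0$. One small slip: in your converse verification the matched identities should read $-\theta v_0-2\theta u v_0\,r(u)=0$ and $-\theta^2 u-2\theta^2 u^2\,r(u)=0$ (the parabolic-term contributions enter with the opposite sign to what you wrote); as displayed they would force $r(u)=+1/(2u)$, whereas the corrected signs give $r(u)=-1/(2u)$, consistent with your necessity computation.
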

\begin{proof}
The surface given by (\ref{eq:4}) is rotational with the axis $l$ if and only if for arbitrary $\theta $ and $v\in \mathbb{R}$, there exist $\tilde{u} \in J$ and $\tilde{v} \in \mathbb{R}$ such that
\begin{center}
$ \left(
    \begin{array}{ccc}
      1 & \theta & -\theta \\
      -\theta  & 1-\theta^2/2  & \theta^2/2 \\
      -\theta & -\theta^2/2 & 1+\theta^2/2
 \end{array}
  \right) 
\left(
\begin{array}{c}
f(u)+v \\
g(u)+u +r(u)v^2/2 \\
g(u)-u +r(u)v^2/2
\end{array}
\right)
=
\left(
\begin{array}{c}
f(\tilde{u})+\tilde{v} \\
g(\tilde{u})+\tilde{u} +r(\tilde{u})\tilde{v}^2/2 \\
g(\tilde{u})-\tilde{u} +r(\tilde{u})\tilde{v}^2/2
\end{array}
\right)$,
\end{center}
that is,
\begin{center}
$\left(
\begin{array}{c}
f(u)+v+2u\theta  \\
-\theta f(u)-\theta v+g(u)+u +r(u)v^2/2-u\theta ^2 \\
-\theta f(u)-\theta v+g(u)-u +r(u)v^2/2-u\theta ^2
\end{array}
\right)=
\left(
\begin{array}{c}
f(\tilde{u})+\tilde{v} \\
g(\tilde{u})+\tilde{u} +r(\tilde{u})\tilde{v}^2/2 \\
g(\tilde{u})-\tilde{u} +r(\tilde{u})\tilde{v}^2/2
\end{array}
\right)$.
\end{center}
By the second and third equations, we get $\tilde{u}=u$. Substituting this to the first equation, we get $\tilde{v}=v+2u\theta$. By substituting $\tilde{u}$ and $\tilde{v}$ to the second equation, we obtain $f=0$ and $r(u)=-1/2u$.
\end{proof}

 \renewcommand{\theequation}{B.\arabic{equation}}
\setcounter{equation}{0}

\section{Proof of the fact that the surface (\ref{entiregraph}) is not a ruled surface}\label{B}
\begin{propo}\label{B_Prop}
The surface (\ref{entiregraph}) is not a ruled surface.
\end{propo}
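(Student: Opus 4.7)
The plan is to argue by contradiction: assume that a straight segment lies on the surface, and use the transcendental (exponential) structure of the parametrization to force the direction vector of that segment to be zero. Since every ruled surface is covered by a one-parameter family of lines, exhibiting a single point of the surface through which no nonconstant line passes already contradicts ruledness.

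First, I would suppose that for some $(u_0,v_0)\in\mathbb{R}^2$ there is a nonzero direction $(\xi_1,\xi_2,\xi_3)\in\mathbb{R}^3$ and an open interval $s\in(-\varepsilon,\varepsilon)$ on which $X(u_0,v_0)+s(\xi_1,\xi_2,\xi_3)$ lies in the image of $X$. Because $X$ is an entire graph over the $xy$-plane (equivalently, $(u,v)\mapsto (v,\,pe^{-2\sqrt{-2a}u}+u+\sqrt{-a/2}\,v^2/2)$ is a real-analytic diffeomorphism of $\mathbb{R}^2$), there exist unique real-analytic functions $u(s),v(s)$ with $u(0)=u_0$, $v(0)=v_0$ satisfying $X(u(s),v(s))=X(u_0,v_0)+s(\xi_1,\xi_2,\xi_3)$. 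The first coordinate gives $v(s)=v_0+s\xi_1$, and subtracting the third coordinate of $X$ from the second gives $u(s)=u_0+s\lambda$ with $\lambda:=(\xi_2-\xi_3)/2$.

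Second, I would substitute these expressions into the sum of the second and third coordinates of $X$. This produces the identity
\begin{equation*}
2p\,e^{-2\sqrt{-2a}\,u_0}\bigl(e^{-2\sqrt{-2a}\,s\lambda}-1\bigr)+\sqrt{-a/2}\,\bigl(2v_0\,\xi_1 s+\xi_1^{2}s^{2}\bigr)=s(\xi_2+\xi_3),
\end{equation*}
which must hold on an open interval in $s$. Both sides are real-analytic in $s$, so all Taylor coefficients in $s$ must match. The exponential on the left contributes a nonzero coefficient at every order $s^{k}$, $k\ge 1$, whenever $\lambda\neq 0$ (since $p<0$, in particular $p\neq 0$); matching the coefficient of $s^{k}$ for $k\ge 3$ therefore forces $\lambda=0$, i.e.\ $\xi_2=\xi_3$. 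With $\lambda=0$ the identity collapses to $\sqrt{-a/2}\,(2v_0\xi_1 s+\xi_1^{2}s^{2})=s(\xi_2+\xi_3)$; the $s^{2}$ coefficient then gives $\xi_1=0$, and the $s^{1}$ coefficient gives $\xi_2+\xi_3=0$, which combined with $\xi_2=\xi_3$ yields $\xi_2=\xi_3=0$. Thus $(\xi_1,\xi_2,\xi_3)=(0,0,0)$, contradicting the assumption that we had a genuine line direction.

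The only nonroutine point is the justification that the Taylor coefficients of the exponential of order $\ge 3$ cannot conspire to cancel with polynomial terms in $s$; this is the step where the presence of the nontrivial exponential factor (rather than, say, a polynomial) is crucial, and it ultimately reduces to the elementary fact that $e^{cs}-1$ is a polynomial in $s$ only when $c=0$. Everything else is bookkeeping on the coordinates, and the analyticity of $X$ and of $(u(s),v(s))$ makes the Taylor-coefficient comparison legitimate. Since no straight line passes through any point of the surface, the surface is not ruled.
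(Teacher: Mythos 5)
Your proof is correct, and it takes a genuinely different (and somewhat cleaner) route than the paper's. The paper assumes a ruled parametrization $c(\tau)+s\,n(\tau)$, differentiates twice in $s$, and extracts the relation $n_1(\tau)=\pm(2/(-a))^{1/4}\sqrt{2ap}\,e^{-\sqrt{-2a}\,u(\tau,s)}(n_2(\tau)-n_3(\tau))$, whose left side is independent of $s$; this forces $u_s=0$ and then $n_2-n_3=0$, contradicting the earlier step that $n_2-n_3$ cannot vanish without $n$ vanishing. You instead show the stronger statement that \emph{no straight segment at all} lies on the surface: the key observation is that the first coordinate and half the difference of the last two coordinates recover $v$ and $u$ affinely from a point of the surface, so any segment on the surface corresponds to an affine path $(u(s),v(s))$; plugging this into the sum of the last two coordinates isolates the genuinely transcendental term $2pe^{-2\sqrt{-2a}u_0}(e^{-2\sqrt{-2a}\lambda s}-1)$ against a quadratic-versus-linear polynomial identity, and comparing Taylor coefficients of order $\ge 3$ (where the polynomial terms are absent, so no cancellation is possible) kills $\lambda$, after which the degree-$2$ and degree-$1$ coefficients kill $\xi_1$ and $\xi_2+\xi_3$. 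Your approach buys elementarity (no differentiation of an unknown ruling, only an explicit power-series comparison) and a marginally stronger conclusion; the paper's approach is more in the classical ruled-surface idiom but requires the slightly delicate bookkeeping that $n(\tau_0)=0$ would contradict immersedness. Both are valid; yours is self-contained and I see no gap.
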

 \begin{proof}
 We prove by contradiction. If the surface X given by (\ref{entiregraph}) is a ruled surface, then it has the following local representation:
 \begin{equation}
 X(u,v)=c(\tau)+sn(\tau), \label{ruled}
 \end{equation}
 where $c$ and $n$ are curves in $\mathbb{L}^3$. By differentiating (\ref{ruled}) with respect to $s$, we get
 \begin{align}\label{eq:n}
u_s\left(
\begin{array}{c}
0 \\
-2\sqrt{-2a}pe^{-2\sqrt{-2a}u}+1 \\
-2\sqrt{-2a}pe^{-2\sqrt{-2a}u}-1
\end{array}
\right)
+
v_s\left(
\begin{array}{c}
1 \\
\sqrt{-a/2}v\\
\sqrt{-a/2}v
\end{array}
\right)
=n(\tau)
=\left(
\begin{array}{c}
n_1(\tau) \\
n_2(\tau)\\
n_3(\tau)
\end{array}
\right).
\end{align}
From these equations, we obtain
\begin{subnumcases}{}
v_s(\tau ,s)=n_1(\tau),   \label{eq:v_s} \\
2u_s(\tau ,s)=n_2(\tau)-n_3(\tau). \label{eq:u_s}
\end{subnumcases}
By differentiating (\ref{eq:n}) with respect to $s$, we get
  \begin{align*}
u_s^2\left(
\begin{array}{c}
0 \\
-8ape^{-2\sqrt{-2a}u} \\
-8ape^{-2\sqrt{-2a}u}
\end{array}
\right)
+
u_{ss}\left(
\begin{array}{c}
0 \\
-2\sqrt{-2a}pe^{-2\sqrt{-2a}u}+1 \\
-2\sqrt{-2a}pe^{-2\sqrt{-2a}u}-1
\end{array}
\right)
+
v_s^2\left(
\begin{array}{c}
0 \\
\sqrt{-a/2}\\
\sqrt{-a/2}
\end{array}
\right)
+
v_{ss}\left(
\begin{array}{c}
1 \\
\sqrt{-a/2}v\\
\sqrt{-a/2}v
\end{array}
\right)
=\bm{0}.
\end{align*}
From these equations, we obtain
\begin{equation*}
v_s(\tau ,s)=\pm \left(\frac{2}{-a}\right)^{\frac{1}{4}}\sqrt{8ap}e^{-\sqrt{-2a}u(\tau ,s)}u_s(\tau ,s).   
\end{equation*}
By substituting (\ref{eq:v_s}) and (\ref{eq:u_s}) to the above equation, we get
\begin{equation}
n_1(\tau )=\pm \left(\frac{2}{-a}\right)^{\frac{1}{4}}\sqrt{2ap}e^{-\sqrt{-2a}u(\tau ,s)}(n_2(\tau )-n_3(\tau )).   \label{eq:n1 and n2-n3} 
\end{equation}
If there exists a constant $\tau _0\in \mathbb{R}$ such that $n_2(\tau _0)-n_3(\tau _0)=0$, then $n_1(\tau _0)=0$ by (\ref{eq:n1 and n2-n3}), and hence $n(\tau _0)=0$ by (\ref{eq:n}), (\ref{eq:v_s}), and (\ref{eq:u_s}). It means that the surface (\ref{ruled}) is not immersed at $\tau =\tau _0$, which is a contradiction. Therefore, we obtain $n_2(\tau )-n_3(\tau )\neq 0$ holds for arbitrary $\tau$, and by (\ref{eq:n1 and n2-n3}), 
\begin{equation*}
\frac{n_1(\tau )}{n_2(\tau )-n_3(\tau )}=\pm \left(\frac{2}{-a}\right)^{\frac{1}{4}}\sqrt{2ap}e^{-\sqrt{-2a}u(\tau ,s)}.
\end{equation*}
Since the left hand side of the above equation does not depend on $s$, we get $u_s(\tau ,s)=0$. By using (\ref{eq:u_s}), we get $n_2(\tau )-n_3(\tau )=0$, which is a contradiction, and hence we obtain the desired result.  \end{proof}
 

\begin{thebibliography}{99}
 \bibitem{Calabi}
\textsc{E. Calabi}, Examples of the Bernstein problem for some nonlinear equations, Proc.\ Symp.\ Pure Math.\ {\bf 15} (1970), 223--230.
 
\bibitem{CY}
\textsc{S. Y. Cheng, S. T. Yau}, Maximal spacelike hypersurfaces in Lorentz-Minkowski space, Ann.\ of Math.\ (2) {\bf 104}, (1976), 407--419 .
 
 
\bibitem{Estudillo}
\textsc{F. J. M. Estudillo and A. Romero}, Generalized maximal surfaces in Lorentz-Minkowski space $\mathbb{L}^3$, Math.\ Proc.\ Cambridge Philos.\ Soc.\ {\bf 111} (1992), 515--524.

\bibitem{FujimoriETAL1}
\textsc{S. Fujimori, Y.W. Kim, S.-E. Koh, W. Rossman, H. Shin, H. Takahashi, M. Umehara, K. Yamada and S.-D. Yang}, Zero mean curvature surfaces in $\mathbb{L}^3$ containing a light-like line, C.R. Acad. Sci. Paris. Ser. I. {\bf 350} (2012), 975--978.

\bibitem{FujimoriETAL2}
\textsc{S. Fujimori, Y. W. Kim, S.-E. Koh, W. Rossman, H. Shin, M. Umehara, K. Yamada and S.-D. Yang}, Zero mean curvature surfaces in Lorenz-Minkowski 3-space which change type across a light-like line, Osaka J.\ Math.\ {\bf 52} (2015), 285--297.

\bibitem{FujimoriETAL3}
\textsc{S. Fujimori, Y. W. Kim, S.-E. Koh, W. Rossman, H. Shin, M. Umehara, K. Yamada and S.-D. Yang}, Zero mean curvature surfaces in Lorentz-Minkowski 3-space and 2-dimensional fluid mechanics, Math.\ J.\ Okayama Univ.\ {\bf 57} (2015), 173--200.


\bibitem{FujimoriETAL4}
\textsc{S. Fujimori, Y. Kawakami, M. Kokubu, W. Rossman, M. Umehara and K. Yamada}, Entire zero-mean curvature graphs of mixed type in Lorentz-Minkowski 3-space, Quart.\ J.\ Math.\ {\bf 67} (2016), 801--837.

\bibitem{Gu1}
\textsc{C. Gu}, The extremal surfaces in the 3-dimensional Minkowski space, Acta Math.\ Sinica.\ {\bf 1} (1985), 173--180.

\bibitem{Gu2}
\textsc{C. Gu}, A global study of extremal surfaces in 3-dimensional Minkowski space, Differential geometry and differential equations (Shanghai, 1985). Lecture Notes in Math.\ {\bf 1255}, Springer, Berlin, (1987), 26--33.

\bibitem{Gu3}
\textsc{C. Gu}, Extremal surfaces of mixed type in Minkowski space $\mathbb{R}^{n+1}$, Variational methods (Paris, 1988). Progr. Nonlinear Differential Equations Appl.\ 4, Birkhauser Boston, Boston, MA, (1990), 283--296.

\bibitem{HKT}
\textsc{A. Honda, M. Koiso and Y. Tanaka}, Non-convex anisotropic surface energy and zero mean curvature surfaces in the Lorentz-Minkowski space, J. Math-for-Industry.\ {\bf 5} (2013), 73--82.

\bibitem{KKSY}
\textsc{Y. W. Kim, S.-E. Koh, H. Shin and S.-D. Yang}, Spacelike maximal surfaces, timelike minimal surfaces, and Bj\"orling representation formulae, J. Korean Math.\ Soc.\ {\bf 48} (2011), 1083--1100.

\bibitem{Klyachin}
\textsc{V. A. Klyachin}, Zero mean curvature surfaces of mixed type in Minkowski space, Izv.\ Math.\ {\bf 67} (2003), 209--224.

\bibitem{Kobayashi}
\textsc{O. Kobayashi}, Maximal surfaces in the 3-dimensional Minkowski space $\mathbb{L}^3$, Tokyo J.\ Math.\ {\bf 6} (1983), 297--309.

\bibitem{Konderak}
\textsc{J. Konderak}, A Weierstrass representation theorem for Lorentz surfaces, Complex Var. Theory Appl.\ {\bf 50} (5) (2005), 319--332.

\bibitem{LLR}
\textsc{F. L\'opez, R. L\'opez, R. Souam}, Maximal surfaces of Riemann type in Lorentz-Minkowski space $\mathbb{L}^3$, Michigan Math.\ J.\ {\bf 47} (2000), no.\ 3, 469--497.

\bibitem{Lopez1}
\textsc{R. L\'opez},  Timelike surfaces with constant mean curvature in Lorentz three-space, Tohoku Math.\ J. (2) {\bf 52} (2000), no.\ 4, 515--532.

\bibitem{Lopez2}
\textsc{R. L\'opez}, Constant mean curvature hypersurfaces foliated by spheres, Differential Geom. Appl.\ 11 (1999), 245--256.

\bibitem{Magid}
\textsc{M. A. Magid}, Timelike surfaces in Lorentz 3-space with prescribed mean curvature and Gauss map, Hokkaido Math.\ J.\ {\bf 20} (1991), 447--464.

\bibitem{Riemann}
\textsc{B. Riemann},  \"{U}ber die Fl\"{a}che vom Kleinsten Inhalt bei gegebener Begrenzung, Bernhard Riemann's gesammelte mathematische Werke und wissenschaftlicher Nachlass. Dover Publications, Inc., New York, N. Y. (1953), 301--333.

\bibitem{ST}
\textsc{V. Sergienko and V.G. Tkachev}, Doubly periodic maximal surfaces with singularities, Proceedings on Analysis and Geometry (Russian) (Novosibirsk Akademgorodok, (1999), 571--584, Izdat. Ross.\ Akad.\ Nauk Sib.\ Otd.\ Inst.\ Mat., Novosibirsk, 2000.

\bibitem{UY}
\textsc{M. Umehara and K. Yamada}, Maximal surfaces with singularities in Minkowski space, Hokkaido Math.\ J.\ {\bf 35} (2006), 13--40.
\end{thebibliography}
 \end{document}